\newcommand{\e}{\varepsilon}
\newcommand{\curl}{\operatorname{curl}}
\newcommand{\dist}{\operatorname{dist}}
\newcommand{\p}{\partial}
\newcommand{\C}{\mathbb{C}}
\newcommand{\R}{\mathbb{R}}
\newcommand{\A}{\mathcal{A}}
\newcommand{\D}{\mathcal{D}}
\newcommand{\ot}{\overline{t}}
\newcommand{\logep}{\left| \log \e \right|}
\newcommand{\ilogep}{{\frac{1}{\logep}}}
\newcommand{\Wdot}{{{\dot W}^{-1,1}}}
\newcommand{\Omegab}{\overline{\Omega}}
\newcommand{\norm}[1]{\left\|#1\right\|}
\newcommand{\LN}{\left\|}
\newcommand{\RN}{\right\|}
\newcommand{\LV}{\left|}
\newcommand{\RV}{\right|}
\newcommand{\LC}{\left(}
\newcommand{\RC}{\right)}
\newcommand{\LB}{\left[}
\newcommand{\RB}{\right]}
\newcommand{\LCB}{\left\{}
\newcommand{\RCB}{\right\}}
\newcommand{\LA}{\left<}
\newcommand{\RA}{\right>}
\numberwithin{equation}{section}
\newtheorem{theorem}{Theorem}[section]
\newtheorem{proposition}[theorem]{Proposition}
\newtheorem{corollary}[theorem]{Corollary}
\newtheorem{lemma}[theorem]{Lemma}
\newtheorem{definition}[theorem]{Definition}
\theoremstyle{remark}
\begin{document}

\title[Vortex liquids]{Vortex liquids and the Ginzburg-Landau equation}
\author[M. Kurzke]{Matthias Kurzke}
\address{School of Mathematical Sciences\\
University of Nottingham\\
Nottingham NG7 2RD\\
UK
}
\email{matthias.kurzke@nottingham.ac.uk}
\author[D. Spirn]{Daniel Spirn}
\address{School of Mathematics\\
University of Minnesota\\
Minneapolis, MN 55455\\
USA}
\email{spirn@math.umn.edu}
\date{\today}

\thanks{M.K. was supported in part by DFG SFB 611.
 D.S. was supported in part by NSF grants DMS-0707714 and DMS-0955687.}

\setcounter{tocdepth}{1}

\begin{abstract}
We establish vortex dynamics for the time-dependent Ginzburg-Landau equation for asymptotically large numbers of vortices for the problem without a gauge field and either Dirichlet or Neumann boundary conditions.  As our main tool, we establish quantitative bounds on several fundamental quantities, including the kinetic energy, that lead to explicit convergence rates.  
For  dilute vortex liquids we prove that sequences of solutions converge to the hydrodynamic limit.
\end{abstract}

\maketitle

{
\tableofcontents
}

\section{Introduction}
\label{sec:intro}
Let $u: [0,\infty)\times\Omega \to \mathbb{C}$ satisfy the scaled Ginzburg-Landau equation
\begin{equation} \label{tdgl}
{1\over \logep} \p_t u = \Delta u + {1\over \e^2} u \LC 1 - |u|^2 \RC
\end{equation}
with  either  Dirichlet boundary conditions 
\begin{align}
\label{DirichletBC}  u & = e^{i n\theta + i \varphi_\star} \hbox { on } \p \Omega
\end{align}
with $\varphi_\star \in C^2$, $\int_{\p \Omega} \p_\tau \varphi_\star = 0$, so 
$\deg(u;\p\Omega)=n$,
 or Neumann boundary conditions
\begin{align}
\label{NeumannBC}  \p_\nu u & = 0  \hbox { on } \p \Omega.
\end{align}
We take $\Omega$ to be a smooth, simply connected domain {in $\R^2$} containing the origin.
Equation \eqref{tdgl} models the dynamic behavior of superconductors when the electromagnetic field potential is absent.   When a gauge field is present, the corresponding Gorkov-Eliashberg equations 
\begin{equation}\label{ge}
\begin{split}
\p_\Phi u & = \nabla_A^2 u + {1\over \e^2} u \LC 1 - |u|^2 \RC \\
E & = - \curl \curl A + j_A(u) ,
\end{split}\end{equation}
where $\p_\Phi = \p_t + i \Phi$, $E = \p_t A + \nabla \Phi$, and $j_A(u) = \LC i u, \nabla_A u \RC$, provide a more complete model of superconductivity.  

In order to describe the behavior of solutions of \eqref{tdgl} with small $\e$ we define 
 some fundamental quantities including:
\begin{align*}
\hbox{ energy density } \quad & e_\e(u) = {1\over 2 } \LV  \nabla u \RV^2 + {1\over 4\e^2} \LC 1 - |u|^2 \RC^2   \\   
\hbox{ supercurrent } \quad & j(u)  = \LC i u , \nabla u \RC   \\ 
\hbox{ vorticity/Jacobian } \quad & J(u)  = \det \nabla u  = {1\over 2 } \curl j(u) .  
\end{align*}
Here  $(\cdot,\cdot)$ denotes the real scalar product of two complex 
numbers, so $(a,b)=\frac12(\bar a b + a \bar b)$ for $a,b\in \C$.  
Solutions to equation \eqref{tdgl} diffuse the Ginzburg-Landau energy 
\begin{equation} \label{bbhenergy}
E_\e(u) = \int_\Omega e_\e(u) 
\end{equation}
via the identity
\begin{equation} \label{PDEenergyidentity}
 E_\e(u(t)) +  \int_0^t \int_\Omega {\LV \p_t u \RV^2 \over \logep} = E_\e(u(0)).
\end{equation}

\subsection{Vortex dynamics and vortex liquids}
\label{subsec:introliquids}
A prominent feature of type II superconductivity is the presence of localized regions, called \emph{vortices},  where superconductivity vanishes.  In particular there exist some points $\{a_j \}_{j=1}^n$ in $\Omega$ where $|u(a_j)| = 0$.  Furthermore, about each vortex the winding number of the phase is \emph{quantized}; in particular 
$${1\over 2 \pi} \int_{\p B_r (a_j)} \tau \cdot j(u) \approx d \in \mathbb{Z} \backslash \{0\}. $$  
In the vicinity of each vortex  the Ginzburg-Landau energy $E_\e(u)$ blows up at the rate $\pi \logep + O(1)$.  Bethuel-Brezis-H\'elein showed in \cite{BBH} that  minimizers of the  Ginzburg-Landau energy \eqref{bbhenergy}
can be expanded further up to second order 
\begin{equation} \label{energyexpansion2ndorder}  
E_\e(u) = n \LC \pi \logep + \gamma \RC + W(a) + o(1),
\end{equation}
where $\gamma$ is a universal constant and 
\begin{equation}\label{expansionW2ndorder}    
W(a) = - \pi \sum_{i \neq j} \log |a_i - a_j | + \hbox{ boundary effects}
\end{equation}
is a \emph{renormalized energy} and the winding number about each vortex is one.   
We use the shorthand
 $a = (a_1,...,a_n) \in \Omega^n$ for a collection of $n$ points in $\Omega$ here.
 
This renormalized energy is precisely the bounded domain version
of the \emph{Kirchhoff-Onsager} functional that arises in two dimensional incompressible Euler equations and other settings.  
The renormalized energy will be discussed in more detail in 
Sections~\ref{sec:renen} and~\ref{sec:dirichletrenen}.
From back-of-the-envelope calculations one finds that $J(u)$ is quantized and looks like a sum of integer-weighted delta functions; and so, for small $\e$ one finds that 
\[
J(u) \approx {e_\e(u) \over \logep} \approx \pi \sum_{j=1}^n \delta_{a_j}
\] 
in the case when the winding number about each vortex equals one, 
and as $\e \to 0$, $u$  limits to
\begin{equation*}  \label{CHMformallimit}
u_\star = \prod_{j=1}^n { x - a_j \over |x - a_j|} e^{i \psi_\star}
\end{equation*}
where $\psi_\star$ is $H^1(\Omega)$.  This $u_\star$ is referred to as the \emph{canonical harmonic map} when $\psi_\star$ is a harmonic function.  This limiting behavior was established in many situations, see for example \cite{BBH, LinLin, Sandier, JerrardSIMA, JerrardSoner}.

When dynamics  \eqref{tdgl} are turned on, these vortices move according to
the gradient flow of the Kirchhoff-Onsager energy:
\begin{equation}  \label{ODE}
\dot{a}_j = - {1\over \pi}  \nabla_{a_j}  W.
\end{equation}
The $\logep$ factor in front of \eqref{tdgl} is the critical time scale on which vortices will move and can be thought of as the length of time  it takes the unscaled time dependent Ginzburg-Landau equation to move an $O(\logep)$ amount of energy an $O(1)$ distance.  
That vortices satisfy \eqref{ODE} in the limit was the subject of a formal asymptotic study by E \cite{EVortex}.  Later,
 arguments of Lin \cite{LinHeat} and Jerrard-Soner \cite{JSparabolic} provided rigorous justification of the limit.  
Both \cite{LinHeat} and \cite{JSparabolic} assume that the number of 
vortices is uniformly bounded as $\e\to 0$.
The limit equation \eqref{ODE} is the gradient flow of $W$ just as 
\eqref{tdgl} is the (rescaled) gradient flow of the integrated 
energy density $\int_\Omega e_\e(u)$.  The similarity in structure can also be seen by the energy dissipation identity
\begin{equation}\label{ODEenergyidentity}
W(a(t)) + \pi\int_0^t \LV \dot{a}(s) \RV^2  = W(a(0)).
\end{equation}
 This structure was exploited to 
give a more abstract proof of the motion law
by Sandier-Serfaty \cite{SandSerf} in their $\Gamma$-convergence of gradient flows framework.  

In recent years there have been significant advances in understanding
 the dynamics of a  finite numbers of vortices   by Bethuel-Orlandi-Smets \cite{BOS} on $\R^2$ and  by Serfaty \cite{SerfatyCollision} on bounded domains.  These results allow for much weaker initial conditions, handle collisions of plus/minus vortices, and describe the dynamical behavior of higher degree vortices.  



On the other hand, the behavior of the time dependent Ginzburg-Landau equations with asymptotically large numbers of vortices
has  received mostly formal treatment.   The question of how large numbers of vortices behave in superconductors is important from both  experimental and  numerical perspectives.  In the former,  typical superconductors contain many millions of  vortices per sample \cite{Chapman, Essmann} so the large vortex problem is a fundamental feature of high $T_C$ superconducting devices.    In  the latter, point vortex methods provide a useful class of numerical algorithms for simulating challenging PDE's, like vortex sheets; hence, \eqref{ODE} is a reasonable numerical 
approximation of the limiting mean field equation with vortex sheet initial data.
 
In \cite{ELiquid} E looks at how the  analogue of \eqref{ODE} 
on $\R^2$ behaves in  a mean field sense as $n \to \infty$.  Defining the vortex density function $\omega_n = {1\over n} \sum_{j=1}^n \delta_{a_j(t)}$, the author shows that the limiting density, $\omega = \lim_{n\to\infty} \omega_n$, formally satisfies a weak PDE  of the form
\begin{align*}
\p_{{t}} \omega + \operatorname{div} ( \omega v) & = 0 \\
v & = \nabla \LC\Delta^{-1} \RC \omega
\end{align*}
after rescaling time $t$.
Subsequently, this ODE limit on $\R^2$ was   rigorously established by Lin-Zhang
\cite{LinZhang}.  

There are many similarities between this ODE limit problem and ODE limit problem arising from the point vortex method for the Euler equations.  In the latter case it was
 shown by Schochet \cite{Schochet}, and later by Liu-Xin \cite{LiuXin}, that the vortex density function for Euler point vortices on $\R^2$, which follow the Kirchhoff law 
\[
\dot{a}_j = - {1\over \pi} \nabla^\perp_{a_j} W(a),
\]
{limits} to a weak Delort solution to the incompressible Euler equations on $\R^2$.  Due to the similarities of the two problems,
 Lin-Zhang \cite{LinZhang} used the approach of \cite{LiuXin} to prove the associated hydrodynamic limit of the ODE \eqref{ODE} on $\R^2$.

The present work is the first to directly couple the Ginzburg-Landau equation
to a mean field PDE. All previous works either prove a PDE to ODE limit
for a finite number of vortices or pass from the ODE to the mean field PDE limit. 
Our quantitative results enable us to take the diagonal limit in a rigorous way.
\begin{figure}[htbp] 
   \centering
   \includegraphics[width=4in]{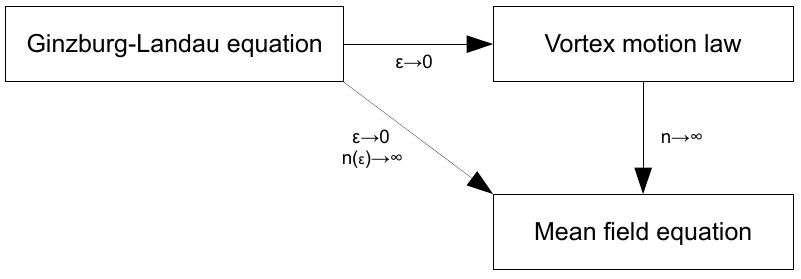} 
   \caption{Limiting from the Ginzburg-Landau equation directly to the mean field equation}
   \label{fig:example}
\end{figure}

In order to make the direct connection between the Ginzburg-Landau equation and the limiting mean field equation, it is necessary to establish two steps.  The first of which entails a proof that \eqref{tdgl} can accept asymptotically large numbers of vortices for long-enough times.  The second step involves coupling these Ginzburg-Landau solutions to an appropriate hydrodynamic limit of \eqref{ODE} on bounded domains.

\subsection{Results}
 In the following we let 
\[
A \lesssim B \quad \hbox{ if } A \leq C B
\]
for some $C$ that depends only on $\Omega$ and {$\varphi_\star$}.   
We tacitly assume that $\e$ is small enough that we 
can use estimates of the type $|\log\logep| \lesssim \logep$.

We define the \emph{excess energy} 
 $$
 D(a(t)) = E_\e(u(t) - \LB n \LC \pi \logep + \gamma \RC + W(a(t)) \RB,
 $$
{where  $\gamma$ and $W(a(t))$ are defined in \eqref{energyexpansion2ndorder} and \eqref{expansionW2ndorder}; the excess energy}
 will be used to control the deviation of the vortex path from the path defined by  the ODE \eqref{ODE}.  
 We also define 
\[
\rho_a = {1\over 4} \{ \min_{j \neq k } \LV a_j  - a_k \RV , \min_j \operatorname{dist} ( a_j, \p \Omega) \}.
\]
as a measure of how close vortices are to each other or the boundary.
We choose a number $\rho_\star$ with $0< \rho_\star< \rho_{a(0)}$. This 
defines a time scale 
\[
\tau_0 = \inf \left\{t >0 \hbox { such that }
{\rho_{a(t)}}\le \rho_\star \right\}>0
\]
on which vortices will stay well-separated.
For $\rho\le\rho_\star$ we set
 \[\Omega_\rho(a) = \Omega \backslash \cup_{j=1}^n B_\rho(a_j).\]
By $\Omega^{n*}$, we will denote the set of $a\in \Omega^n$ such 
that $a_i\neq a_j$ for $i\neq j$.
 
Finally, we introduce a weak topology related to the length of a minimal connection, see \cite{BCL}, 
$$
\| f \|_{\dot{W}^{-1,1}(\Omega)} = \sup_{\substack{\LN \nabla \phi \RN_{L^\infty(\Omega)} \leq 1\\ \phi \in W_0^{1,\infty}(\Omega)}}
\LV \int_\Omega \phi f \RV.
$$
 This norm  provides a good scale-invariant measure of the distance of $J(u)$ and ${e_\e(u) \over \logep}$ to a sum of delta functions.   In particular if $|a_j - b_j| \leq \rho_\star$ for $j=1, \ldots, n$ then  $$\| \sum_j \delta_{a_j} - \sum_j \delta_{b_j} \|_{\dot{W}^{-1,1}(\Omega)} = \sum_j \LV a_j - b_j \RV.$$

We can now state our first theorem which supplies a long time existence result of the vortex motion law for asymptotically large numbers of vortices in the dilute regime.  
\begin{theorem} \label{thm:quantdynamics}
Suppose $u$ solves \eqref{tdgl} with either \eqref{DirichletBC} or \eqref{NeumannBC}.  Furthermore, let 
$n \leq {\logep^{1\over 200}}$ and $\rho_\star \geq \logep^{-{1\over 100}}$ and 
 {suppose $\| u_{\e_n}(0) \|_{L^\infty(\Omega)} \leq 1$,}
\begin{align} \label{wellpreparedness}
D(a(0)) & \lesssim \logep^{-{2\over 5}}, \\
\label{wellprep2}
\| J(u(0)) -  \pi \sum_{j=1}^n \delta_{a_j(0)} \|_{\dot{W}^{-1,1}(\Omega)} 
& \lesssim \logep^{-{1\over 3}} 
\end{align}
then for all $0 \leq t \leq \tau_{max}$ we have
\begin{align}
\| {e_\e(u)(t) \over \logep} - \sum_{j=1}^n \pi \delta_{a_j(t)} \|_{\dot{W}^{-1,1}(\Omega)} & \lesssim \logep^{-{1\over 4}}  ,\label{energydensityconvergencethm1} \\
\| J(u)(t) - \sum_{j=1}^n \pi \delta_{a_j(t)} \|_{\dot{W}^{-1,1}(\Omega)} & \lesssim \logep^{-{1\over 4}}, \label{Jacobianconvergencethm1} \\
\int_{\Omega_{\rho_\star}(a(t))} e_\e(|u(t)|) + {1\over 4} \LV { j(u(t)) \over |u(t)| } - j(u_\star(t)) \RV^2 & \lesssim \logep^{- {1\over 5}} ,\label{honeconvergence} \\
\| { j(u(t)) \over |u(t)| } - j(u_\star(t)) \|_{L^{4 \over 3}(\Omega)} & \lesssim \logep^{- {1\over 10}} \label{lpconvergence}
\end{align}
where $j(u_\star(t)) = j(u_\star(a(t)))$, 
\begin{equation*} \label{ehrenfesttime}
\tau_{max} = \min\LCB \tau_0, C \sqrt{|\log\logep| {\rho_\star^4 \over n^{3}}} \RCB,
\end{equation*}
$a_j(t)$ solve \eqref{ODE},
and {$C = C(\Omega)$}. 
\end{theorem}
Theorem~\ref{thm:quantdynamics} can be {extended to initial data having vortex degrees $d_j = \pm 1$ following the the approach in \cite{JSp2}.}  
We also note from Lemma 14 of \cite{JSp2} one can easily construct maps $u_\star^{\e}(x; a)$ that satisfy the well-preparedness assumptions
 \eqref{wellpreparedness}--\eqref{wellprep2}. 
Finally, in the case of a bounded number of vortices it is well known that the well-preparedness hypothesis is not very important, since 
one can show that data will become well-prepared almost 
instantaneously due to strong convergence estimates,
see \cite{LinHeat, JSparabolic, BOS, SerfatyCollision}, and we have no reason to
expect a different behavior here.

Given the result above, we can prove that the sequence of solutions converge in a prescribed sense
 to the expected hydrodynamic limit.   In this theorem we study only Dirichlet boundary conditions \eqref{DirichletBC} since we need to have the vortex motion law hold for times of order $O(n^{-1})$, and in the Neumann case \eqref{NeumannBC}  vortices will migrate  to the boundary too quickly. 

What type of equations do we expect the vortex density to satisfy in the limit? Following E's formal calculations \cite{ELiquid}
and adapting them to the bounded domain case with Dirichlet boundary conditions,
we rescale time $\overline{t} =  {n t}$ 
and consider the limiting  vortex density function $\omega = \lim_{n\to\infty} {1\over n}\sum_{j=1}^n \delta_{a_j(\overline{t})}$.
We obtain the system
\begin{equation} \label{MFE}
\begin{split}
\p_{\overline{t}} \omega + \operatorname{div} ( \omega v) & = 0 \\
v & = 4\pi\nabla \LC\Delta^{-1}_\mathcal{N} \RC \omega
\end{split} \end{equation}
where $\Delta^{-1}_\mathcal{N}: g \to w$ arises through the Poisson problem
\begin{equation} \label{NeumannFunctionIntro} \begin{split}
\Delta w & = g  \hbox{ in } \Omega \\
\p_\nu w & = \frac{1}{2\pi}\p_\tau \theta \hbox{ on } \p \Omega ,
\end{split} \end{equation}
and
 $\theta = \arg(x+iy)$.  Note consistency requires $\int_\Omega g = 1$ due to the Neumann boundary condition\footnote{
{
Our choice of boundary condition \eqref{DirichletBC} is not the most general possible and requires the domain be star-shaped and include the origin. Up to a correction by $\varphi_\star$ that is 
asymptotically small as $n$ becomes large, we have chosen 
$u\approx(\frac{z}{|z|})^n$, which makes sense because we assume $0\in\Omega$. Following \cite{SandierSoret}, it is possible to choose any degree-one map
$U_0:\partial\Omega\to S^1$ and to use the boundary condition $u=U_0^n$ on any simply-connected domain with $C^2$ boundary, instead of \eqref{DirichletBC}. While this does not substantially complicate the analysis,
we have chosen the simpler  \eqref{DirichletBC}, motivated by the case of $\Omega=B_1(0)$.}
}.   To motivate a notion of an interior weak solution  of \eqref{MFE} we follow Lin-Zhang \cite{LinZhang}.
If $\omega$ is a smooth solution to \eqref{MFE}, we multiply by $\chi \in C_0^\infty([0,T]\times \Omega)$ and integrate by parts.  Then, 
writing $t$ for $\overline{t}$ again,  
\[
- \int \int \p_t \chi \omega - \frac1{4\pi}\int \int \p_{x_k} \chi v_k \p_{x_j} v_j = 0,
\]
where we used $\omega = \frac1{4\pi}\operatorname{div} v$ in the interior of $\Omega$. 
Performing
integration by parts and using $\p_{x_1} v_2 -\p_{x_2} v_1 =0$,
we obtain
the identity
\begin{equation}\label{eq:ibpdiv}
\int \p_{x_k} \chi v_k \p_{x_j} v_j = 
- \int (\p_{x_1}^2-\p_{x_2}^2)\chi (v_1^2-v_2^2)
-4\int \p_{x_1 x_2} \chi v_1 v_2,
\end{equation}
and so we arrive at
 \eqref{generalizedweaksolution} below.  We note that this definition is  similar to the one introduced in \cite{Delort, Majda} for weak solutions to the 2D incompressible Euler equations except that the associated test functions are exchanged.  

\begin{definition} \label{generalizedweaksolutiondef}
We say $\omega$ is a \emph{generalized interior weak solution} to \eqref{MFE} if for all $\chi \in C_0^\infty([0,T]\times\Omega)$
\begin{equation}  \label{generalizedweaksolution}
- \int_0^t \int_\Omega \omega \p_t \chi + 
\frac{1}{4\pi} \int_0^t  
\int_\Omega \LC {\p^2_{x_1} \chi - \p^2_{x_2} \chi} \RC \LC v_1^2 - v_2^2 \RC 
+ \frac1\pi\int_0^t \int_\Omega  \p_{x_1} \p_{x_2} \chi  \ v_1 v_2  = 0
\end{equation}
where \[
v_j (x) = 4\pi\p_{x_j} \Delta^{-1}_{\mathcal{N}} \omega = 
2\int \p_{x_j} N(x,y) 
\omega(y) dy.\]
Here
 $N(x,y)$ is the Neumann function, which satisfies
 \begin{align*}
 \Delta N(\cdot,y) &= 2\pi \delta_y \quad\text{in $\Omega$}
 \\
 \p_\nu N(\cdot,y) &= \p_\tau \theta \quad\text{on $\p\Omega$}.
 \end{align*}

\end{definition}
We can now state our main result which shows that we can solve \eqref{MFE}-\eqref{NeumannFunctionIntro} with vortex sheet initial data via a subsequence of 
either solutions of \eqref{tdgl} or \eqref{ODE} with appropriate data.  
\begin{theorem} \label{thm:hydrodynamic}
Assume that $\omega_0 \in \mathcal{M} \cap \dot{H}^{-1}(\Omega)$ satisfies $\omega_0 \geq 0$, $\int_\Omega \omega_0 = 1$, and $\operatorname{supp} (\omega_0) \subset \{ \operatorname{dist}(x, \p \Omega) \geq C_0 > 0 \}$ for some constant $C_0$. Then there exists a sequence of initial data $u_{\e_n}(0)$ with $n = |\log |\log |\log \e_n| | |^{1\over4} $ number of vortices that satisfies the hypotheses for Theorem~\ref{thm:quantdynamics}
such that ${1\over n} {e_{\e_n}(u_{\e_n}(0)) \over \pi |\log \e_n| } \to \omega_0$ in $\mathcal{M}$ as $\e_n \to 0$.
Such initial data generates a sequence of solutions $u_{\e_n}(t)$ of \eqref{tdgl} with boundary condition \eqref{DirichletBC} for times up to $T =  |\log |\log \e_n| |^{1\over 7}$.

Setting $\ot = n t$ and letting $\omega_{\e_n} (\ot) = { 1 \over n} { e_{\e_n}(u_{\e_n}(\ot)) \over \pi |\log\e_n|}$ then 
for a subsequence
\begin{align*}
\omega_{\e_n}  \to \omega \hbox{ in } \mathcal{M}(\Omega
\times
[0,\infty))
\end{align*}
where $\omega$ is a \emph{generalized interior weak solution}, defined above, to
\begin{equation} \label{meanfieldequation}
\begin{split}
\p_{\ot} \omega + \operatorname{div} \LC v \omega \RC & = 0 \\
v & = 4\pi\nabla \LC \Delta^{-1}_\mathcal{N} \RC \omega
\end{split}
\end{equation}
 Finally, $v(\overline t) \in L^2_{loc}(\Omega)$.  Here $\Delta^{-1}_\mathcal{N} f = w $ if 
\begin{equation} \label{definitionNeumannLaplaceOp}
\begin{split}
\Delta w & = f \hbox{ in } \Omega \\
\p_\nu w & = \frac{1}{2\pi} \p_\tau \theta \hbox { on } \p \Omega
\end{split}
\end{equation}
where $\theta = \arg(x+iy)$ and $\int_\Omega f = 1$.  
\end{theorem}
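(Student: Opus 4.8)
The plan is to run two limits at once: the quantitative Ginzburg--Landau-to-ODE estimate of Theorem~\ref{thm:quantdynamics}, which lets \eqref{tdgl} carry $n$ vortices faithfully for a long time, and a hydrodynamic (mean field) limit for the gradient flow \eqref{ODE}; these are coupled by slaving the vortex number to $\e$ via $n=|\log|\log|\log\e_n|||^{1/4}$, so that both limits are triggered as $\e_n\to0$.

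\emph{Step 1 (choice of data).} For each $n$ I would pick a configuration $a(0)=(a_1(0),\dots,a_n(0))\subset\{\dist(\cdot,\p\Omega)\ge C_0/2\}$ whose empirical measure $\tfrac1n\sum_j\delta_{a_j(0)}$ converges to $\omega_0$ weak-$*$ and in $\dot H^{-1}$, whose pairwise separations and distances to $\p\Omega$ are bounded below in terms of $n$ alone, and with renormalized energy $W(a(0))\lesssim n^2$; such configurations exist by mollification and quantization of $\omega_0$, using $\omega_0\ge0$, $\operatorname{supp}\omega_0\subset\{\dist(\cdot,\p\Omega)\ge C_0\}$ and $\omega_0\in\dot H^{-1}$. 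Feeding $a(0)$ into Lemma~14 of \cite{JSp2} produces data $u_{\e_n}(0)=u^{\e_n}_\star(\cdot;a(0))$ meeting \eqref{wellpreparedness}--\eqref{wellprep2}, once $\e_n\to0$ is taken fast enough relative to $n$ (the hypothesis $\rho_\star\ge\logep^{-1/100}$ is verified in Step~2). That $\tfrac1n\tfrac{e_{\e_n}(u_{\e_n}(0))}{\pi\logep}\to\omega_0$ in $\mathcal M\cap\dot H^{-1}$ then follows: the $\mathcal M$ statement because $\Wdot$-closeness of the normalized energy density to $\tfrac1n\sum_j\pi\delta_{a_j(0)}$ forces weak-$*$ convergence; the $\dot H^{-1}$ statement from $D(a(0))\lesssim\logep^{-2/5}$, which ties the kinetic part of $e_{\e_n}(u_{\e_n}(0))$ to the interaction energy of $a(0)$.

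\emph{Step 2 (apply Theorem~\ref{thm:quantdynamics} and rescale time).} Theorem~\ref{thm:quantdynamics} then supplies $u_{\e_n}(t)$ solving \eqref{tdgl}, \eqref{DirichletBC} on $[0,T_n]$ with $\norm{\tfrac{e_{\e_n}(u_{\e_n})(t)}{\logep}-\sum_j\pi\delta_{a_j(t)}}_{\Wdot}\lesssim\logep^{-1/4}$, the $a_j$ solving \eqref{ODE}. Setting $\ot=nt$ and $\omega_n(\ot)=\tfrac1n\sum_j\delta_{a_j(\ot)}$ gives $\norm{\omega_{\e_n}(\ot)-\omega_n(\ot)}_{\Wdot}\lesssim\tfrac1n\logep^{-1/4}\to0$ uniformly on $[0,nT_n]$. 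The rest of the step is arithmetic: the dissipation identity \eqref{ODEenergyidentity} gives $W(a(\ot))\le W(a(0))\lesssim n^2$ for all $\ot$, and since $W$ diverges logarithmically when two vortices meet or one reaches $\p\Omega$---and for \eqref{DirichletBC} the boundary part of $W$ is confining, so $\tau_0=\infty$---one gets $\min_j\dist(a_j(\ot),\p\Omega)\gtrsim e^{-Cn^2}$ and $\min_{j\ne k}|a_j(\ot)-a_k(\ot)|\gtrsim e^{-Cn^2}$ uniformly in time, hence $\rho_\star\gtrsim e^{-Cn^2}$ independently of the horizon, which removes the apparent circularity in the definition of $T_n$. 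Then $nT_n\gtrsim\sqrt{\log\logep}\,e^{-Cn^2}$, and because $n^2=|\log|\log|\log\e_n|||^{1/2}$ makes $e^{Cn^2}$ subpolynomial in $\log\logep$, both $\rho_\star\ge\logep^{-1/100}$ and $nT_n\ge|\log|\log\e_n||^{1/7}\to\infty$ hold for $\e_n$ small.

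\emph{Step 3 (hydrodynamic limit and diagonal passage).} The bound $W(a(\ot))\lesssim n^2$ also yields $\|\omega_n(\ot)\|_{\dot H^{-1}(\Omega)}\le C(\omega_0)$ uniformly, the pair-interaction part of $W$ being comparable to $n^2$ times the $\dot H^{-1}$ energy of $\omega_n$ up to boundary corrections; moreover $\omega_n$ solves weakly $\p_{\ot}\omega_n+\dv(\omega_n v_n)=0$ with $v_n=\nabla\Delta^{-1}_{\mathcal N}\omega_n$ suitably desingularized, which gives equicontinuity of $\ot\mapsto\omega_n(\ot)$ in a negative Sobolev norm. An Arzel\`a--Ascoli plus weak-$*$ compactness argument then extracts $\omega_n\to\omega$ in $C([0,\ot_0];(\mathcal M\cap\dot H^{-1})\text{-weak})$ for every $\ot_0$. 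In the limit of \eqref{generalizedweaksolution} the term $\iint\omega_n\p_t\chi$ is immediate, while the quadratic velocity terms require the Delort-type concentration-compactness of \cite{Schochet,LiuXin}, adapted to the Ginzburg--Landau mean field setting in \cite{LinZhang}: positivity $\omega_n\ge0$ together with the uniform $\dot H^{-1}$ bound excludes concentration of $v_n\otimes v_n$ on the diagonal, so $v_n\otimes v_n\rightharpoonup v\otimes v$ tested against $\nabla^2\chi$ for $\chi\in C_0^\infty(\Omega)$, whence $\omega$ is a generalized interior weak solution of \eqref{meanfieldequation} on $[0,\ot_0]$ with $v\in L^2_{loc}(\Omega)$. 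Finally, for fixed $\ot_0$ and $n$ large one has $nT_n>\ot_0$, so Step~2 gives $\omega_{\e_n}(\ot)-\omega_n(\ot)\to0$ in $\Wdot$ on $[0,\ot_0]$; combined with $\omega_n\to\omega$ and the uniform mass and $\dot H^{-1}$ bounds on $\omega_{\e_n}$ (from \eqref{PDEenergyidentity} and the excess energy $D$) this yields $\omega_{\e_n}(\ot)\to\omega(\ot)$ in $\mathcal M\cap\dot H^{-1}$, and a diagonal extraction over $\ot_0=k\to\infty$ produces the global limit $\omega$ asserted. The main obstacle is precisely this passage to the limit in the nonlinear velocity terms---the two-dimensional Euler-type concentration-compactness phenomenon---which succeeds only because the all-degree-$+1$ hypothesis keeps the limiting vorticity nonnegative; the remaining difficulty is the bookkeeping of Step~2, which forces the triple-logarithmic growth of $n(\e_n)$: $n$ must diverge for the mean field limit while staying small enough that $e^{-Cn^2}$ never falls below $\logep^{-1/100}$.
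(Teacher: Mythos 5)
Your proposal follows the same architecture as the paper's proof: quantized initial configurations approximating $\omega_0$ with bounded normalized interaction energy, logarithmic confinement from the Dirichlet renormalized energy (Lemma~\ref{lowerboundrhofromW} and Proposition~\ref{lowerboundrhostardirichlet}) to keep $\rho_{a(t)}$ above $\logep^{-1/100}$ for the triple-logarithmic choice of $n$, the Schochet/Liu--Xin/Lin--Zhang passage to the mean field limit for the ODE empirical measures, and the transfer to the energy density via the $\dot W^{-1,1}$ estimate of Theorem~\ref{thm:quantdynamics}.

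Two points in your sketch paper over real work. First, Step 1 asserts the existence of configurations with $W(a(0))\lesssim n^2$ and $n$-polynomial separations ``by mollification and quantization''; this is the bulk of the paper's proof, which builds an explicit lattice (squares of side $h=n^{-1/4}$, with $n_j$ points on alternating thin rectangles) and verifies $-\tfrac{1}{n^2}\sum_{j\ne k}N_n(a^j_{0,i},a^\ell_{0,k})\lesssim 1$ by comparing the discrete logarithmic sums with $-\iint\omega_0(y)\log|y-z|\,\omega_0(z)\lesssim\|\omega_0\|^2_{\dot H^{-1}}$. Second, your Step 3 hangs the compactness on ``$\|\omega_n(\ot)\|_{\dot H^{-1}}\le C$ uniformly,'' which is false as stated: $\omega_n$ is a normalized sum of Dirac masses and each Dirac mass has infinite $\dot H^{-1}$ norm in two dimensions. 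The correct surrogate, and what the paper actually uses, is the uniform bound on the \emph{off-diagonal} interaction energy $-\tfrac{1}{n^2}\sum_{j\ne k}N_n(a_j,a_k)$ (propagated in time by $W(a(\ot))\le W(a(0))$), which is converted via Lemma~\ref{maximalvorticityestimate} into the decay $M_r(\omega_n)\lesssim|\log r|^{-1/2}+n^{-1/2}$ of the DiPerna--Majda maximal vorticity function; that decay, together with the kernel bounds of Lemma~\ref{EvansMullerEstimates}, is what rules out diagonal concentration in the quadratic velocity terms. With these two repairs your argument coincides with the paper's.
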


The convergence to the hydrodynamic limit holds true for a more general
 class of initial data similar to those we construct in Theorem~\ref{thm:hydrodynamic}.
 This yields the following result on the  limit from the parabolic Ginzburg-Landau equation to the mean field equation:

\begin{theorem}
 \label{lowerboundrhostardirichletconverse}
Let $u_{\e_n}(0)$ be a sequence of initial data to the Ginzburg-Landau equation \eqref{tdgl} with Dirichlet boundary conditions \eqref{DirichletBC} with 
{$|| u_{\e_n}(0) ||_{L^\infty(\Omega)} \leq 1$ and} satisfies the following
hypotheses:
\begin{align}
\LN J(u_{\e_n}(0)) - \sum_{j=1}^n \pi \delta_{a_j(0)} \RN_{\dot{W}^{-1,1}(\Omega)} & \lesssim \LV \log \e_n \RV^{-{1\over 3}} \label{hyp1}  \\
D(a(0)) & \lesssim \LV \log \e_n \RV^{-{2\over 5}}  \label{hyp2} \\
n &  \leq \LV \log | \log \LV \log \e_n \RV |\RV^{1 \over 4}   \label{hyp3} 
\end{align}
with $a(0) \in \Omega^n$ satisfying the following:
\begin{align}
\rho_{a(0)}&  \geq \LV \log | \log \LV \log \e_n \RV | \RV^{-{ 1 \over 3}}    \label{hyp4}  \\ 
-{1\over n^2} \sum_{j\neq k} N_n(a_j(0),a_k(0)) & \lesssim 1,  \label{hyp5} 
\end{align}
where $N_n(\cdot,\cdot)$ is defined in Section~\ref{sec:dirichletrenen} and is closely related to $N(\cdot,\cdot)$.
Setting $\ot = n t$ and letting $\omega_{\e_n} (\ot) = { 1 \over n} { e_{\e_n}(u_{\e_n}(\ot)) \over \pi |\log\e_n|}$ then 
for a subsequence
\begin{align*}
\omega_{\e_n}  \to \omega \hbox{ in } \mathcal{M}(\Omega
\times
[0,\infty)).
\end{align*}
where
 $\omega(\overline{t})$ is a {generalized interior weak solution} to
\eqref{meanfieldequation} for $\overline{t} \in (0,\infty)$.  If  $v = 4\pi\nabla \LC \Delta^{-1}_\mathcal{N} \RC \omega$, defined by $\eqref{definitionNeumannLaplaceOp}$, then  $v(\overline t) \in L^2_{loc}(\Omega)$.  
\end{theorem}
The assumptions of Theorem~\ref{lowerboundrhostardirichletconverse} 
may look rather demanding; nevertheless, such data exist per the construction in the proof of Theorem~\ref{thm:hydrodynamic}.
Furthermore, we expect that fairly generic data corresponding to a collection of degree $1$ vortices will satisfy such assumptions after a short time since the parabolic Ginzburg-Landau equation quickly
dissipates not only the Ginzburg-Landau energy and the renormalized energy, but also the 
excess energy, compare \cite{BOS,SerfatyCollision}.

\subsection{Discussion}

The issue of whether the weak solution satisfies the correct boundary condition is a deep and difficult question.  Since vorticity can (and should) concentrate on the boundary, it is difficult to acquire the necessary regularity to ensure the boundary conditions are achieved in the classical weak sense.  Some recent progress has been made in \cite{LNX} by establishing \emph{boundary-coupled weak solutions}  of the two dimensional incompressible Euler equations in exterior domains.

To make a fully consistent limit it would be interesting to study the question of uniqueness of the limiting mean field equation \eqref{meanfieldequation}.  In \cite{LinZhang} the authors establish uniqueness for initial data in $L^\infty$ with compact support for the problem
in $\R^2$.  A similar study of regular solutions would be natural for  \eqref{meanfieldequation}-\eqref{definitionNeumannLaplaceOp} too.

From formal considerations of \eqref{meanfieldequation} the vortex density function satisfies $\p_t \omega + v\cdot \nabla \omega = - 4\pi\omega^2$,
 so along the trajectory of the induced velocity one sees that the density function should decay like $t^{-1}$.  For smooth initial data on $\R^2$ Lin-Zhang \cite{LinZhang} proved this fact, which implies that the vorticity spreads out quickly from a compact set.   This behavior implies that we expect most vortices to be  pushed out to the boundary in a similar fashion.  This conforms to the picture presented in Sandier-Soret \cite{SandierSoret} for global minimizers of the functional $E_\e(u)$ on bounded domains, constrained to the boundary condition of the type $u=e^{i n \theta}$ and $n \to \infty$.  Sandier-Soret show that vortices accumulate close to the boundary of the domain as $n$ grows asymptotically large.  
Taken together, we should view Theorem~\ref{thm:hydrodynamic} as a mean field description of the vortex density for  times in the  \emph{mesoscale} in the interior of the domain.

 The dilute density of the vortex liquid results from two issues.  The first is that we use energy comparison and a Gronwall inequality to pin the vortex positions to the ODE \eqref{ODE}.  This results in an upper bound 
 $\tau_{max} \lesssim \sqrt{{\rho_\star^4\over n^3} | \log \logep|} $ in Theorem~\ref{thm:quantdynamics}.  
 Integrating methods of \cite{SerfatyCollision} and/or \cite{BOS} should improve some of these bounds.
   The second issue arises from the poor bounds on the intervortex distance for the ODE \eqref{ODE}.  Better knowledge of how the ODE behaves should improve the vortex density allowed here.

%

Although \eqref{tdgl} provides a fertile ground to test the mathematics of the Gorkov-Eliashberg equations, the more physical problem entails looking at the hydrodynamic limit of \eqref{ge}.  
For the Gorkov-Eliashberg equations \eqref{ge}, corresponding
proofs of the vortex motion law are due to the second author  \cite{SpirnGE} for $O(1)$ fields and 
Sandier-Serfaty \cite{SandSerf} for larger fields, following the formal asymptotic work of \cite{PR}.   
Formally, it was shown by Chapman-Rubinstein-Schatzman \cite{CRS}  that the hydrodynamic limit of the associated ODE arising from the vortex motion law of
\eqref{ge} converges to a weak solution of 
\begin{equation} \label{completemeanfieldequations}
\begin{split}
\p_{{t}} \omega + \operatorname{div} ( \omega v) & = 0 \\
v & = \nabla  \LC \Delta- \mathbb{I} \RC^{-1}  \omega .
\end{split}
\end{equation}

There has been a lot of recent progress on the limiting
equations for the vortex densities \eqref{completemeanfieldequations}. Ambrosio-Serfaty \cite{AS} and
Ambrosio-Mainini-Serfaty \cite{AMS} study them as a metric gradient flow
in the space of measures, with the Wasserstein distance as the natural metric.
However, they do not obtain the convergence.
Even when it becomes possible 
to carry out the program outlined in the survey of Serfaty 
\cite{SerfGCsurv} and
to directly obtain the Wasserstein gradient flow studied in \cite{AS,AMS}
from the Gorkov-Eliashberg equation by a $\Gamma$-convergence
of gradient flows type result, we believe that our approach will still 
be useful. For one, it provides quantitative bounds that are useful 
in type II superconductors without going to the $\e\to0$ limit of ``extreme''
type II superconductivity.
More importantly, as our approach does not rely on the gradient flow structure,
 it can be adapted to yield results for more general situations,
such as the mixed flows studied in \cite{KMMS} and \cite{Miot}
for the ungauged problem and in \cite{KS_CPDE} and \cite{SerfatyTice}
for the gauged problem. Such motion laws have physical importance,
as they can be used to explain the sign-change in the Hall effect of
type II superconductors, see \cite{Dorsey}, \cite{Kopnin}. 
Similarly, we expect that our approach can be adapted also
to the Hamiltonian Ginzburg-Landau wave system, where
results for the PDE to ODE limit for finitely many vortices
have been found in \cite{Jwave} and \cite{LinWave},
and the ODE to mean field PDE limit has been
studied in \cite{LinZhang:Wave}.

\subsection{Method}
We finish the introduction with an outline of the arguments in the paper.  The general scheme of the paper is to  deduce the vortex motion law for the time dependent Ginzburg-Landau equations by 
carefully considering certain differential identities,
in particular the time evolution of the energy density.  

Our proof is based on the following differential identities,
which hold for smooth solutions of \eqref{tdgl}:
\begin{align}  \label{conservationmass}
\hbox{ mass identity } \quad & \LB {1\over \logep} \p_t - \Delta - {2 \over \e^2} |u|^2 \RB \LC |u|^2 - 1\RC =  2 \LV \nabla u \RV^2  \\
\label{eqn:supercurrent}
\hbox{ supercurrent identity } \quad & {1\over \logep} \LC i u , \p_t u \RC = \text{div}  j(u) \\
 \label{eqn:conservationenergy}
\hbox{ energy identity } \quad & \p_t e_\e(u) = \text{div}  \LC \p_t u , \nabla u \RC - { \LV \p_t u \RV^2 \over \logep}.
\end{align}
For fixed $\e$ regularity follows from standard parabolic theory.  We remark that \eqref{conservationmass} can be used to show that $0 \leq |u| \leq 1$; \eqref{eqn:supercurrent} will be used to show that $j(u)$ is nearly divergence-free in a time-averaged sense.

The identity \eqref{eqn:conservationenergy} is crucial in 
obtaining a lower bound for the kinetic energy.
Using \eqref{tdgl} once more,
we can also deduce from \eqref{eqn:conservationenergy}
\begin{equation}\label{eq:consenwithdivT}
\frac{1}{\logep} \p_t e_\e(u) 
 = \mathrm{div} \ \mathrm{div}\left(\nabla u \otimes \nabla u - \frac12 e_\e(u) \mathrm{id} 
 \right)-\frac{|\p_t u|^2}{\logep^2},
\end{equation}
which
 is the primary tool to establish the vortex motion law. 
 
Passing to the limit $\e\to 0$ in \eqref{eq:consenwithdivT}
and controlling the growth of the energy excess
would yield a proof of the motion law for bounded numbers of 
vortices if the initial energy excess is $o(1)$ as $\e\to 0$.
This method is not as powerful as the elliptic PDE approach of Serfaty 
\cite{SerfatyCollision0, SerfatyCollision} or the parabolic PDE approach of 
Bethuel-Orlandi-Smets \cite{BOS}, but it
provides a way to avoid using convergence properties in the proof,
and we can use quantitative estimates in every step. Passing
to the limit $\e\to 0$ for bounded $n$, our results are weaker than those
in the literature, but our explicit bounds provide rates of 
convergence. 

Our approach of using differential identities and 
explicit estimates
follows the program of the second author and R. Jerrard \cite{JSp2} for the Gross-Pitaevsky equation $i \p_t u = \Delta u + {1\over \e^2} u \LC 1 - |u|^2 \RC$.  Surprisingly, implementing this approach for \eqref{tdgl} is more challenging and requires several new estimates.

One such additional difficulty is that 
 the arguments of all previous vortex motion law 
proofs for \eqref{tdgl} use a limiting kinetic energy lower bound,
which has so far only been available for a bounded number  of vortices.
In Theorem~\ref{thm:kineticenergybound},
 one of our central results,
 we provide such a bound for a large number of vortices.
This type of estimate is not needed for the 
Gross-Pitaevsky equation since one has conservation of energy
for both the PDE and ODE in that case.

We give an overview of the contents of the rest of the paper.

In Section~\ref{sec:renen}, we recall some known results
on the renormalized energy. Lemma~\ref{L.gradW} connects
the gradient of the renormalized energy to the
canonical harmonic map $u_\star$, and Proposition~\ref{gammastabilityprop}
quantifies how close $u$ and $u_\star$ are based on the excess energy.

In Section~\ref{sec:dirichletrenen}, we give some detailed
results for the renormalized energy in the Dirichlet case, following
Sandier-Soret
\cite{SandierSoret}. These estimates are used to show that vortices
stay away both from each other and the boundary for sufficiently 
long times to pass to the hydrodynamic limit under certain conditions.  

In Section~\ref{sec:local}, we discuss localization 
estimates for the Jacobian and energy density. For the
Jacobian, results of \cite{JSp2} yield points $\xi_j$ such that
$\|J(u)-\pi\sum \delta_{\xi_j}\|_{\Wdot}$ is small in a 
precisely quantified way.
We provide a
 new estimate on the localization of the Ginzburg-Landau energy density 
 to the same set of delta functions of the type
 $$ \LN {e_\e(u) \over \logep} - \sum_{j=1}^n \pi \delta_{\xi_j} \RN_{\dot{W}^{-1,1}(\Omega)} \lesssim {1\over \logep} \LB W(a)+  n D(a) \RB .$$  The estimate presented here is a refined (i.e. $\e$-rate dependent) version 
of an estimate found in Colliander-Jerrard \cite{CJ}.  Therefore, in order to localize the vortices to a high resolution, we need good estimates on the excess energy, $D(a(t))$.  

Since the localization and gamma stability error estimates depend explicitly on the excess energy, it is necessary to understand how the excess energy evolves in time.  By the energy dissipation identities  \eqref{PDEenergyidentity} and \eqref{ODEenergyidentity} we see that 
\begin{equation}  \label{excessenergyidentity}
D(a(t)) = D(a(0))   + \pi\int_0^t { \LV \dot{a} \RV^2 }  -   \int_0^t \int_\Omega {\LV \p_t u \RV^2 \over \logep}  ;
\end{equation}
consequently, $D(a(t))$ can be controlled by well-preparedness of the initial data and a lower bound on the kinetic energy.  
This lower bound is presented in Section~\ref{sec:quantkinetic}
as Theorem~\ref{thm:kineticenergybound}: 
\[
\int_0^t \int_\Omega {\LV \p_t u \RV^2 \over  \logep}  -\pi \int_0^t { \LV \dot{a} \RV^2 }  \geq
- \mathbf{error} , 
\]
where $\mathbf{error}$ 
depends explicitly on $\e$, the number of vortices, 
the minimal vortex distance, the time scale 
and the localization error.
 This result provides 
a purely quantitative approach to the kinetic energy lower bounds that are found in \cite{LinWave, Jwave, SandSerf}, each of which rely on compactness properties to get a lower bound.  To establish this result we make quantitative the kinetic energy estimate of \cite{LinWave}, who used the differential identity for the energy density, along with a limiting result on the equipartitioning of potential energy.  Here we make use of an optimal quantitative equipartitioning result in \cite{KurzkeSpirnMRI} that identifies how close in $L^1$ the tensor $\nabla u \otimes \nabla u$ is to the diagonal matrix $\frac12|\nabla u|^2 \mathrm{id}$.  Placing 
this equipartitioning result into the differential identity for the Ginzburg-Landau energy $e_\e(u)$, applying a test function 
$$
\sum \chi ( x - a_j(t)) \dot{a}_j(t) \cdot (x - a_j(t)), \ \hbox{  $\chi$ a smooth cutoff function},
$$
and integrating over $\Omega$ yields the lower bound.  

After these preparations, we prove Theorem~\ref{thm:quantdynamics}
in Section~\ref{sec:proofmainth}. The main task is to 
understand how close the points $\xi_j(t)$, found by the localization estimates, 
are to
the points $a_j(t)$ given by the ODE. To this end, we introduce
a quantity $\eta(t)$ which serves as a differentiable
 replacement 
for $\pi\sum_j|\xi_j(t)-a_j(t)|$.

In Subsection~\ref{subsec:assu}, we define various small quantities
that serve as error bounds in our estimates, and several
time intervals on which good estimates hold; 
in particular, we show that $\eta$ really controls 
everything we need.

It therefore suffices to control the growth of $\eta$ via a Gronwall argument.
We  estimate $\dot\eta$ in Subsection~\ref{subsec:doteta},
relying on the energy evolution \eqref{eq:consenwithdivT}.
The resulting simple bound of the type
$
|\dot \eta| \lesssim A_\e \sqrt{\eta} + o(1)_\e$
 is not sufficient to apply the Gronwall
inequality globally, but yields a reasonable short time result.
The culprit for the $\sqrt{\eta}$ is a certain supercurrent
estimate 
that is difficult to improve at a fixed time.

Subsection~\ref{subsec:timeaverage} provides the necessary improvements
by averaging over a short timescale, $\dot {\LA \eta \RA} \lesssim \widetilde A_\e \LA \eta \RA + o(1)_\e$. This technique, taken
from \cite{JSp2}, makes use of \eqref{eqn:supercurrent}
to obtain a
 quantitative bound 
on how far $j(u)$ is from being divergence free.
Using a Hodge decomposition of $\frac{j(u)}{|u|}-j(u_\star)$ 
and the fact that $j(u_\star)$ is divergence free 
while $\curl (j(u)-j(u_\star)$ is controlled, 
we can bound time averages of terms of the type
$\int_{\Omega_\rho} \zeta \LC {j(u) \over |u|} - j(u_\star)\RC$ 
for some prescribed function $\zeta$. As in 
\cite{JSp2}, this part is fairly technical, 
but the differences from the Gross-Pitaevsky case are
significant enough that we feel it is
necessary to include these  details.

The proof of Theorem~\ref{thm:quantdynamics} is finished in
Subsection~\ref{subsec:continuity}, where we show via 
a continuity argument that  
$J(u)$ is localized near the $a(t)$ for long times. In particular,
we obtain the vortex motion law.

In the final Section~\ref{sec:hydrodynamic}, we consider
the hydrodynamic limit and prove Theorems~\ref{thm:hydrodynamic} and~\ref{lowerboundrhostardirichletconverse}.
 In the first part of the section, we prove a hydrodynamic limit of the vortex ODE's for bounded domains
 which is analogous to 
 the results of \cite{Schochet, LiuXin} for the Euler point vortex method on $\R^2$ and the gradient flow version of \cite{LinZhang} on $\R^2$ for bounded domains.  
The proof
 requires a careful expansion of the time dependent behavior of ${1\over n} \sum_{j=1}^n \delta_{a_j(t)}$ integrated against a test function with compact support.  Implementing the strategy of \cite{LiuXin, LinZhang}
 and using estimates on the Neumann function,
  we prove the convergence and the local velocity bound.  
 
 To complete the proof of Theorem~\ref{thm:hydrodynamic}, we show 
 that nonnegative vortex sheet initial data
  with compact support can be 
  approximated by a sequence of a 
  sum of degree-one vortices that satisfy the conditions of our 
  class of initial data. 
  This improves on the construction in \cite{LinZhang},
  which uses vortex blobs with arbitrary vorticities.
    Finally, due to Theorem~\ref{thm:quantdynamics},
 the quantity
  $\omega_n(t) = {1\over n }{e_\e(u(t) ) \over \pi \logep}$
converges to the same limit
 as the vortex density function ${1\over n} \sum_{j=1}^n
 \delta_{a_j(t)}$.
 
\section{The renormalized energy}
\label{sec:renen}
In this section we recall some results on the renormalized
energy and the canonical harmonic map. 

Recall from \cite{BBH} the canonical harmonic map $u_\star = {u_\star(\alpha)}$ which satisfies the following Hodge system
\[
{\operatorname{div} j(u_\star)=0 \qquad \curl j(u_\star) = 2 \pi \sum_{j=1}^n \delta_{\alpha_j} }
\]
with either
\[
j(u_\star) \cdot \tau = n \p_\tau \theta + {\p_\tau \varphi_\star }
\]
on $\p \Omega$  or 
\[
j(u_\star) \cdot \nu = 0 
\]
on $\p \Omega$.  
There exists a $G$ with $\nabla^\perp G = j(u_\star)$, where
$G(x) = G(x; \alpha)$ is defined by the following Poisson equation
\begin{equation} \label{G.def}
\Delta G = 2 \pi  \sum_{j=1}^n \delta_{\alpha_j}\hbox{ in } \Omega 
\end{equation}
with either
\[
\p_\nu G = n \p_\tau \theta + {\p_\tau \varphi_\star } \hbox{ on }\p \Omega
\]
or 
\[
G = 0  \hbox{ on }\p \Omega.
\]

The \emph{renormalized energy}  is then  defined, {recalling $\Omega_\rho(\alpha)=\Omega\setminus\cup_{j=1}^n B_\rho(\alpha_j)$,} as
\[
{W(\alpha)} = \lim_{\rho \to 0} \left(\int_{\Omega_\rho(\alpha)} {1\over 2} \LV \nabla u_\star \RV^2  - \pi n \log {1\over \rho}\right).
\]
We also define the \emph{approximate energy} as
\[
{W_\e(\alpha) = W(\alpha)} + n \LC \pi \logep + \gamma \RC
\]
where 
$$\gamma=\lim_{\e\to 0}\left(\inf_{u\in H^1(B_1;\C),u(z)=z\text{ on $\p B_1$}}
E_\e(u)-\pi\logep
\right)
$$
is the constant from \cite[Lemma IX.1]{BBH}.  
Finally, the \emph{excess energy} is defined as 
\begin{equation} \label{definitionexcessenergy}
{ D(u;\alpha)= E_\e(u) - W_\e(\alpha).}
\end{equation}
{We usually simplify this to $D(\alpha)$} when the 
context is unambiguous.

 We also define the excess energy in the ball $B_\sigma(\alpha_j)$ to be
\begin{align*}
D_{B_\sigma(\alpha_j)} = \int_{B_\sigma(\alpha_j)} e_\e(u) - \LC \pi \log {\sigma \over \e} + \gamma \RC.
\end{align*}

We will use the following
 characterization of the gradient of the renormalized energy. 

\begin{lemma}  [Lin \cite{LinHeat}, Jerrard-Soner \cite{JSparabolic},
Jerrard \cite{Jwave}]
Let $\alpha \in \Omega^{n*}$  then the canonical harmonic
map $u_\star = u_\star(\cdot; \alpha)$ and the renormalized energy $W(\alpha)$ satisfy
\begin{equation*}  
\int \p_{x_k x_m} \zeta \LB  \LC j(u_\star)\RC_m \LC j(u_\star) \RC_k 
- {1\over2} \delta_{k m } \LV j(u_\star) \RV^2 \RB  
= -  \sum_{j=1}^n  \partial_k \zeta (\alpha_j)  \LC  \nabla_{\alpha_j}  W (\alpha) \RC_k 
\end{equation*}
where $\zeta \in C^2(\Omega)$ and $\nabla^2 \zeta$ has support in an annular  neighborhood of the $\alpha_j$'s. 
\label{L.gradW}\end{lemma}

Next we list estimates on the canonical harmonic map and renormalized energy and their derivatives.

\begin{lemma}
There exists a constant $C$ depending only on $\Omega$ such that
for every bounded, open  $\Omega\subset \R^2$,
$\alpha \in \Omega^{n*}$, the renormalized energy $W(\alpha)$,
canonical harmonic map $u_\star(\cdot,;\alpha)$
and its potential  $G(\cdot; \alpha)$ as defined in \eqref{G.def}
satisfy
\begin{equation}
\| j(u_\star) \|_{L^\infty(\Omega_r(\alpha))}\ = \ 
\|\nabla G \|_{L^\infty(\Omega_r(\alpha))} \ \le \ 
\frac {2n} r
\label{justar.Linf}
\end{equation}
for all $r \le \rho_\alpha$, and
\begin{equation}
|\nabla_i W(\alpha)|\le \frac {Cn}{\rho_\alpha},\quad\quad
|\nabla_i\nabla_j W(\alpha)|\le \frac {Cn}{\rho_\alpha^2}
\label{Wderivatives}\end{equation} 
for every $i,j\in \{1,\ldots, n\}$.

We also have the upper bound
\begin{equation}
W(\alpha) \le C( n^3 + \frac {n^2}{\rho^2_\alpha}). 
\label{W.scale}\end{equation}


Finally, let $\alpha = ( \alpha_1 ,\ldots, \alpha_n)$ and $\alpha' = (\alpha_1', \ldots, \alpha_n')$ with
$\alpha, \alpha' \in \Omega^{n*}$.  Let 
$\Omega_{r}(\alpha,\alpha') = \Omega \backslash \LC \cup_{j=1}^n B_r (\alpha_j) \cup  B_{r}(\alpha_j') \RC$,
then 
\begin{equation}  \label{chmdifferencebound1}
\LN j(u_\star)(\alpha) - j(u_\star)(\alpha') \RN_{L^\infty (\Omega_r(\alpha,\alpha'))} 
\leq {1 \over r^2} \sum_{j=1}^n \LV \alpha_j - \alpha_j' \RV
\end{equation} 
for all $r \leq \min\{\rho_{\alpha} , \rho_{\alpha'} \}$, and additionally, for $1< p<2$,
\begin{equation}  \label{chmdifferencebound2}
\| j(u_\star(\alpha)) - j(u_\star(\alpha'))\|_{L^p}
\le
( \pi \sum |\alpha_i - \alpha_i'|)^{\frac 2p-1}(2n \pi)^{2-\frac 2p}.
\end{equation}
\end{lemma}  
\begin{proof} 
The Neumann boundary condition results are proved in {Lemma~10, Lemma~11, and Lemma~13} of \cite{JSp2}; we note a typo in the statement of estimate \eqref{W.scale} in \cite{JSp2}.
  Corresponding results for the Dirichlet boundary condition can be established by using similar arguments.
 Further estimates in the Dirichlet case are discussed in Section~\ref{sec:dirichletrenen}.
\end{proof}

Finally, we will need the following quantitative 
coercivity or $\Gamma$-stability result for the renormalized energy:
\begin{proposition}[Jerrard-Spirn, {Theorem 2 of }\cite{JSp2}] \label{gammastabilityprop}
Let $\Omega$ be a bounded, open simply connected subset of $\mathbb{R}^2$ with $C^1$ boundary. Then there exist constants $C$, $K_\star$ depending only on $\Omega$  such that for any $u \in H^1(\Omega; \mathbb{C})$, if there exist $n \geq 0$, finite, with
$\alpha = (\alpha_1, \ldots, \alpha_n) \in \Omega^{n*}$ such that 
\begin{equation*}
\left\| J(u) - \sum_{j=1}^n \pi  \delta_{\alpha_j} \right\|_{\dot{W}^{-1,1}}
\leq s_\e \qquad \hbox{ for some } s_\e \in [ \e \sqrt{\log(\rho_\alpha/\e)}, {\rho_\alpha/ K_\star} ],
\end{equation*}
and if $4 s_\e \leq \sigma^* =\sqrt{ {\rho_\alpha \over n^3} \LC s_\e + \e  E_\e(u) \RC } \leq {\rho_\alpha \over K_\star}$ then
\begin{equation}   \label{liminfenergyexcess}
\int_{\Omega_{\sigma^*}(\alpha) } e_\e(|u|) + {1\over 4} \left| {j(u) \over |u|} - j(u_\star(\alpha)) \right|^2 
\leq D(\alpha) + C  \sqrt{ {n^5 \over \rho_\alpha} \LC {s_\e} + {\e} E_\e(u) \RC }.
\end{equation}
 Finally,
\begin{equation*}
\left\| j(u) - j(u_\star(\alpha)) \right\|_{L^{4\over3}} \leq C \sqrt{D(\alpha)} + {\mathbf {error}}
\end{equation*}
and 
\begin{align*}
\mathbf{error} & \leq C \e^{1\over2} E_\e(u)^{3\over4} \\
& \quad + C n \LC s_\e + \e E_\e(u) \RC^{1\over4} \LB \LC { n \over \rho_\alpha} \RC^{1\over4} 
+ \rho_\alpha^{1\over4} \LC 1  +\sqrt{E_\e(u) \over n^3} \RC \right] .
\end{align*}
\end{proposition}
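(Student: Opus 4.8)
The plan is to follow the strategy of Jerrard--Spirn \cite{JSp2}, which rests on three ingredients: the pointwise orthogonal splitting $e_\e(u) = e_\e(|u|) + \tfrac12\bigl|\tfrac{j(u)}{|u|}\bigr|^2$, a localization of the energy at the balancing scale $\sigma^*$, and the refined vorticity/vortex-ball estimates of \cite{JSp}. First I would split $\Omega = \Omega_{\sigma^*}(\alpha) \cup \bigcup_{j=1}^n B_{\sigma^*}(\alpha_j)$. The hypothesis $\sigma^* \le \rho_\alpha/K_1$ keeps these balls pairwise disjoint and interior to $\Omega$, while $4 s_\e \le \sigma^*$ (together with $s_\e \ge \e\sqrt{\log(\rho_\alpha/\e)}$) guarantees that the $\dot W^{-1,1}$-closeness of $J(u)$ to $\sum_j \pi d_j \delta_{\alpha_j}$ forces the degree of $u$ to concentrate inside the balls, so that the refined vortex-ball lower bound of \cite{JSp} applies and gives $\int_{B_{\sigma^*}(\alpha_j)} e_\e(u) \ge \pi\log(\sigma^*/\e) + \gamma - \mathrm{err}_j$ with $\sum_j \mathrm{err}_j$ quantitatively controlled by $s_\e$ and $\e E_\e(u)$. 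The specific choice $\sigma^* = \sqrt{\rho_\alpha(s_\e + \e E_\e(u))/n^3}$ is dictated by balancing this against the errors produced in the outer region.

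Next, on $\Omega_{\sigma^*}(\alpha)$ --- where $|u_\star| \equiv 1$, so $\tfrac12|\nabla u_\star|^2 = \tfrac12|j(u_\star)|^2$ --- I would expand the supercurrent part around $j(u_\star)$: writing $w := \tfrac{j(u)}{|u|} - j(u_\star)$, one has $\tfrac12\bigl|\tfrac{j(u)}{|u|}\bigr|^2 = \tfrac12|j(u_\star)|^2 + j(u_\star)\cdot w + \tfrac12|w|^2$, and $\tfrac12\int_{\Omega_{\sigma^*}}|j(u_\star)|^2 = W(\alpha,d) + n\pi\log(1/\sigma^*)$ up to a small error absorbed below. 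Adding the ball contributions, the logarithms $n\pi\log(\sigma^*/\e)$ and $n\pi\log(1/\sigma^*)$ recombine with $W(\alpha,d)$ and $n\gamma$ to reconstruct $W_\e(\alpha,d)$; subtracting it, everything collapses to $D(\alpha) \ge \int_{\Omega_{\sigma^*}}e_\e(|u|) + \tfrac12\int_{\Omega_{\sigma^*}}|w|^2 + \int_{\Omega_{\sigma^*}}j(u_\star)\cdot w - \mathrm{err}$, and it remains to dispose of the cross term. Since $|j(u_\star)| \sim n/\sigma^*$ near the cores, bare Cauchy--Schwarz would cost a factor $\sqrt{\logep}$ and is useless; instead, writing $j(u_\star) = \nabla^\perp G$ and integrating by parts on $\Omega_{\sigma^*}$ turns the term into $-\int_{\Omega_{\sigma^*}}G\,\curl w$ plus boundary terms, and since $\curl j(u) = 2J(u)$ while $\curl j(u_\star) = 0$ on $\Omega_{\sigma^*}$, $\curl w$ equals $2J(u)$ up to lower-order pieces from replacing $j(u)$ by $j(u)/|u|$ (which carry factors $\nabla|u|$ and $1-|u|^2$ and are absorbed into $\int e_\e(|u|)$ and $\e$-small quantities). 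Pairing the vorticity defect $J(u)-\sum_j \pi d_j \delta_{\alpha_j}$ against a suitably truncated $\chi G$, admissible because $\|\nabla(\chi G)\|_{L^\infty} \lesssim n/\sigma^*$ by \eqref{justar.Linf}, bounds this by $\lesssim s_\e n/\sigma^*$, so that with the chosen $\sigma^*$ all accumulated errors are $\lesssim \sqrt{n^5(s_\e + \e E_\e(u))/\rho_\alpha}$; spending a fixed fraction of $\tfrac12\int|w|^2$ to absorb residual cross-term contributions leaves exactly the claimed bound, the coefficient $\tfrac14$ recording this absorption.

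For the $L^{4/3}$ estimate I would again split $\|j(u)-j(u_\star)\|_{L^{4/3}(\Omega)}$ between $\Omega_{\sigma^*}$ and $\bigcup_j B_{\sigma^*}(\alpha_j)$. On $\Omega_{\sigma^*}$, write $j(u)-j(u_\star) = |u|\,w + (|u|-1)j(u_\star)$: by H\"older on this set of finite measure, the first piece is $\le |\Omega|^{1/4}\|w\|_{L^2(\Omega_{\sigma^*})} \lesssim \sqrt{D(\alpha)} + \mathrm{err}$ thanks to the first part, and the second is $\le \||u|-1\|_{L^2}\,\|j(u_\star)\|_{L^4(\Omega_{\sigma^*})} \lesssim \e\sqrt{E_\e(u)}\cdot n\sigma^{*-1/2}$, using $\int(1-|u|^2)^2/(4\e^2)\le E_\e(u)$ and $|j(u_\star)|\lesssim n/\dist(x,\{\alpha_j\})$. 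On the balls, the local energy bound gives $\|j(u)\|_{L^{4/3}(B_{\sigma^*})}\le |B_{\sigma^*}|^{1/4}\|j(u)\|_{L^2(B_{\sigma^*})}\lesssim \sqrt{\sigma^*}\,\sqrt{\logep}$, and directly $\|j(u_\star)\|_{L^{4/3}(B_{\sigma^*})}\lesssim n\sqrt{\sigma^*}$; summing over the $n$ balls and substituting $\sigma^* = \sqrt{\rho_\alpha(s_\e + \e E_\e(u))/n^3}$ reproduces precisely the terms displayed in $\mathbf{error}$.

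The hard part will be the cross term in the first estimate: everything else is careful bookkeeping of errors, but that term genuinely needs the refined Jacobian machinery of \cite{JSp} --- both to make $\curl w$ a useful object and to supply the vortex-ball lower bounds with explicit $\dot W^{-1,1}$-dependent error --- together with the balancing choice of $\sigma^*$ that makes $s_\e n/\sigma^*$ comparable to $\sqrt{n^5 s_\e/\rho_\alpha}$. A secondary technical point is verifying that truncating $G$ near $\partial\Omega$ and near the cores does not spoil the integration by parts; this relies on the gradient bound \eqref{justar.Linf} and on $\sigma^* \ge 4 s_\e$, which ensures the vorticity defect is genuinely captured inside the excised balls rather than leaking into $\Omega_{\sigma^*}$.
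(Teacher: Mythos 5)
There is nothing in the paper to compare your argument against: Proposition~\ref{gammastabilityprop} is imported verbatim from Jerrard--Spirn \cite{JSp2} and the paper offers no proof of it, so your proposal can only be judged against the strategy of \cite{JSp2} itself. On that score your outline is faithful to the known route: the orthogonal splitting of $e_\e(u)$ into $e_\e(|u|)$ plus the current part, the decomposition of $\Omega$ into $\Omega_{\sigma^*}(\alpha)$ and the excised balls with the refined per-ball lower bounds of \cite{JSp} (exactly the mechanism recorded in Lemma~\ref{JSpSingleVortexLower}), and the reassembly of the logarithms with $W(\alpha,d)$ to produce the excess $D(\alpha)$ are precisely the ingredients behind the inequality \eqref{excessenergyfixedtimevortexballs} that this paper quotes from the proof of Theorem~3 of \cite{JSp2}, and your absorption of the cross term into a fixed fraction of $\int |w|^2$ correctly explains the factor $\tfrac14$ in \eqref{liminfenergyexcess}; the balancing role of $\sigma^*$ and the $L^{4/3}$ bookkeeping are also consistent with the stated error terms.

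The one place where your sketch is genuinely thin is the cross term $\int_{\Omega_{\sigma^*}} j(u_\star)\cdot w$, which you yourself identify as the hard step but then discharge by assertion. Two points there need real work: (i) after writing $j(u_\star)=\nabla^\perp G$ and integrating by parts, the boundary contributions live on the circles $\p B_{\sigma^*}(\alpha_j)$ (where $G\sim d_j\log\sigma^*$ and only the degree information controls $\oint w\cdot\tau$) and on $\p\Omega$, and these must be estimated quantitatively rather than waved away by ``truncating $G$''; (ii) $\curl\bigl(j(u)/|u|\bigr)$ is not $2J(u)$ -- in polar form it is $\nabla|u|\wedge\nabla\phi$ rather than $2|u|\,\nabla|u|\wedge\nabla\phi$ -- so the replacement costs terms that must be shown to be absorbable into $e_\e(|u|)$ and the $\e$-small quantities with the stated rates. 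Both points are exactly where the refined Jacobian estimates of \cite{JSp} enter in \cite{JSp2}, so your proposal is a correct reconstruction of the strategy but not a self-contained proof; since the proposition is cited rather than proved here, that is an acceptable level of detail, provided you are explicit that these two steps are being delegated to \cite{JSp,JSp2}.
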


\section{Estimates for the Dirichlet case}
\label{sec:dirichletrenen}
In this section we provide estimates on the Neumann functions that comprise the renormalized energy in the Dirichlet case.  These estimates will be used both to generate long-lived solutions of \eqref{tdgl} with asymptotically many vortices and to provide kernel estimates for the hydrodynamic limit theorem.  

We follow the approach of Sandier-Soret \cite{SandierSoret} to define the renormalized energy in terms of Neumann functions.  In particular 
let $N_n(x,y)$ denote the Neumann function which satisfies the following equation
\begin{equation*} \label{neumannfunctionwithn}
\begin{split}
\Delta N_n(\cdot,y) & = 2\pi\delta_{y} \hbox{ in } \Omega \\
\p_\nu N_n(\cdot, y) & = \p_\tau \theta + {1\over n} \p_\tau \varphi_\star \hbox{ on } \p \Omega\\
\int_{\p\Omega} N_n(\cdot,y) (\p_\tau \theta + {1\over n} \p_\tau \varphi_\star)  &= 0
\end{split}
\end{equation*}
and the limiting Neumann function $N(x,y) = N_\infty(x,y)$ which satisfies the following equation
\begin{equation*} \label{neumannfunction}
\begin{split}
\Delta N(\cdot,y) & = 2\pi\delta_{y} \hbox{ in } \Omega \\
\p_\nu N(\cdot, y) & = \p_\tau \theta  \hbox{ on } \p \Omega\\
\int_{\p\Omega} N(\cdot,y) \p_\tau \theta &=0.
\end{split}
\end{equation*}

We also define $H_n(x, y) = N_n(x, y) - \log | x - y |$ and $H(x, y) = N(x, y) - \log | x - y |$ to be the harmonic pieces of the Neumann functions $N_n(x,y)$ and $N(x,y)$, respectively.  
Then 
\begin{equation}  \label{alternativedefrenormalizedenergy}
W(a_1, \ldots, a_n) = -\pi  \sum_{j\neq k} N_n(a_j, a_k)  - \pi \sum_{j=1}^n H_n(a_j,a_j),
\end{equation}
see \cite{BBH,SandierSoret}.

We state the following useful set of estimates:
\begin{lemma}[Sandier-Soret \cite{SandierSoret}]  \label{SandierSoretEstimates}
The Neumann function $N_n(x,y)$ satisfies for $1\le n\le \infty$
the estimates
\begin{enumerate}
\item $N_n(x,y) = N_n(y,x)$
\item $N_n(x,y) = \log |x - y| + H_n(x,y)$ where $H_n(x,y)$ is continuous on $\Omega \times \overline{\Omega} \cup \overline{\Omega} \times \Omega$.  
\item $N_n(x,y) = 2 \log |x - y | + \widetilde{H}_n(x,y)$ where $\widetilde{H}_n$ is continuous on $\p \Omega \times \overline{\Omega} \cup \overline{\Omega}  \times \p \Omega$.
\end{enumerate}
\end{lemma}

In the proof of Lemma~\ref{SandierSoretEstimates} the authors generate $H_n(x,y)$ in steps.  When $\Omega = B_1$ and $\p_\nu N(\cdot,y) = 1$ then $H(x,y) = \widehat{H}(x,y)$, where  $\widehat{H}$ can be explicitly
computed:
\begin{equation} \label{formTrivialNeumannFunction}
\widehat{H} (x,y) = \log | 1 - x \overline{y} |.
\end{equation}
For nontrivial $\p_\nu N_n(\cdot,y) = f _n = \p_\tau \theta + {1\over n} \p_\tau \varphi_\star$ one finds $\widehat{H}_{n}$ satisfies $\widehat{H}_{f_n}(x,y)
= \widehat{H} (x,y) + P(x) + Q(y)$ where $P(x)$ and $Q(y)$ are harmonic in $B_1$ and bounded and continuous up to the boundary.   
Finally, for simply-connected domain $\Omega$ let $w(z)$ denote the conformal mapping of $\Omega$ into $B_1$.  Then one finds
\begin{equation}  \label{formNeumannFunction}
H_n(x,y) = \widehat{H}_{\widehat{f_n}} (w(x), w(y))
\end{equation} 
where $\widehat{f_n}(x)$ is defined as $\widehat{f_n}(w(z)) = f_n(z)/|w'(z)|$.  We note again that in  our case $\widehat{f_n}(x) = \p_\tau \theta + {1\over n} \p_\tau \varphi_\star$.

%

Using \eqref{alternativedefrenormalizedenergy} and Lemma~\ref{SandierSoretEstimates} we prove the following lemma that provides a lower bound on the intervortex distance as a function of the renormalized energy.

\begin{lemma} \label{lowerboundrhofromW} Let $W(a)$ be the renormalized energy for the Dirichlet case, then 
\begin{align*}
\rho_{a} & \geq e^{- {\frac1\pi W(a)} - C n^2}
\end{align*}
for some constant $C$ that depends only on $\Omega$ and $\varphi_\star$.  
\end{lemma}
\begin{proof}
Since the domain is bounded we have from (2) and (3) of the Sandier-Soret lemma
\begin{align*}
{\frac1\pi W(a)} & = - \sum_{j\neq k} N_n(a_j,a_k) - \sum_{j} H_n(a_j,a_j) \\
& \geq \log [ \min\{ \min_{j\neq k}\{ |a_j - a_k| \}, \min_j\{ \operatorname{dist}(a_j, \p \Omega) \} \}]^{-1} - C n^2\\
& \geq  \log \rho^{-1}_a - C n^2,
\end{align*}
where $C$ depends on $\varphi_\star$ and $\Omega$.
Therefore,
$$
\rho_a = e^{\log \rho_a} \geq  e ^{-{\frac1\pi W(a)} - C n^2} .
$$


\end{proof}

The following proposition gives a class of data 
where a good bound on the minimal intervortex distance holds for all time. 

\begin{proposition} \label{lowerboundrhostardirichlet}
Assume that $a_j(t)$ are solutions to 
\[
\dot{a}_j = -{1\over \pi} \nabla_{a_j} W(a) 
\]
with the renormalized energy 
$W$ arising from the Dirichlet boundary condition
 \eqref{DirichletBC}.
If 
$\rho_{a(0)} \geq \LV \log | \log \logep | \RV^{-{ 1 \over 3}}$ and $ n \leq \LV \log | \log \logep |\RV^{1 \over 4} $
then the $a_j(t)$ satisfy 
\[
{\rho_{a(t)} \geq \frac1C | \log \logep|^{-{1\over 10}} }
\]
for all $t$. 
\end{proposition}

%
%
%
\begin{proof}
From our assumptions on $\rho_{a(0)}$ and $n$ we have $$W(a(0)) \leq C   {n^2 \over \rho_{a(0)}} \leq C |\log | \log \logep | |^{5 \over 6}.$$  
 Since $W(a(t)) \leq  W(a(0))$ then 
from Lemma~\ref{lowerboundrhofromW} we see that
\begin{align*}
\rho^{-1}_{a(t)} & \leq C e^{ {\frac1\pi}  W(a(t))+ C n^2} \leq C e^{ {\frac1\pi} W(a(0))+Cn^2}\\
& \leq  C e^{C |\log | \log\logep| |^{{5\over 6}}}\leq  C e^{ {1\over 10} |\log | \log\logep| |} 
\end{align*}
for all time. 
\end{proof}
We note that for this class of initial data {and setting $\rho_\star =  | \log | \logep | |^{-{1\over 6}}$
 so $\rho_{a(t)}\ge \rho_\star$,} 
the quantity $T=C\sqrt{ | \log \logep | { \rho_\star^4 \over n^3}}$ that appears 
in the assumptions of Theorem~\ref{thm:quantdynamics}
satisfies $T\gtrsim \frac1n$, since
$$ C\sqrt{ | \log \logep | { \rho_\star^4 \over n^3}} 
\geq C {| \log \logep |^{ 1 \over6} \over |\log | \log  \logep | |^{3 \over 4} } \geq  | \log \logep|^{1 \over 7}\gtrsim \frac1n,$$
so the time rescaling in Theorem~\ref{thm:hydrodynamic} makes sense.


\section{Localization results}
\label{sec:local}
In this section we 
discuss quantitative estimates
that show how well the fundamental quantities $J(u)$ and
$\frac{e_\e(u)}{\logep}$ are approximated by 
sums of point masses. For $J(u)$, these results
were shown in  \cite{JSp,JSp2}; for the energy, they are new.

\begin{proposition}[Jerrard-Spirn, {Theorem 3 of} \cite{JSp2}] \label{gammalocalizationprop}
Let  $\Omega$ be a
bounded, open, simply connected subset of $\R^2$ with $C^1$ boundary. Then there
exist constants $C>0$ and $C_\star\ge 2$, with $C_\star = \max\{K_\star, {1\over4} \operatorname{diam}(\Omega)\}$ and $K_\star$ is the constant from Proposition~\ref{gammastabilityprop},
such that the following property holds.

For any $u\in H^1(\Omega; \mathbb{C})$, if there exist $n\ge 0$,
 $\alpha = \LC \alpha_1, \ldots, \alpha_n \RC \in \Omega^{n*}$ 
such that 
\begin{equation*}  
\LN J(u) - \sum_{j=1}^n \pi  \delta_{\alpha_j} \RN_{\dot{W}^{-1,1}} 
\le \frac{\rho_\alpha}{8 C_\star n^5},
\label{p2.h1}\end{equation*}
and if in addition $E_\e(u) \ge 1$ and
\begin{equation}
\left[ \frac{n^5}{\rho_\alpha}
E_\e(u) + \frac {n^{10}}{\rho_\alpha^2}\sqrt{E_\e(u)}
\right] \le \frac 1 \e,
\label{Ebds}\end{equation}
then there exist $(\xi_1,\ldots,\xi_n)\in \Omega^{n*}$ such that
$|\xi_i - \alpha_i|\le  \frac{\rho_\alpha}{2 C_\star n^4} $ for all $i$, and 
\begin{align*}
&\left\| J(u) -  \sum _{j=1}^n \pi  \delta_{\xi_j} \right\|_{\dot W^{-1,1} }  
\\
&\hspace{5em}\le \
C\ \e\left[n (C + D(\alpha))^2
e^{\frac 1\pi D(\alpha) } + 
(C + D(\alpha)) \frac{n^5}{\rho_\alpha}+ {E_\e(u)}
\right].\nonumber
\end{align*} 
\label{CHMapprox2}\end{proposition}

We now state a result that clarifies the convergence of ${e_\e(u) \over \logep}$ to a set of delta functions.

\begin{theorem}\label{thm:energyisdelta}
Let $u$ satisfy the same hypotheses as Proposition~\ref{CHMapprox2}, then 
the $\{ \xi_j\}_{j=1}^n$ found in Proposition~\ref{CHMapprox2} satisfy 
\begin{align}
&\left\| {e_\e (u)\over \logep} - \pi \sum _{i=1}^n \delta_{\xi_i} \right\|_{\dot W^{-1,1} }  
\label{approxenergymeasuredelta}
\\
&\hspace{5em}\le \
{C \over \logep} \LB n (D(\alpha)  + \log{ n^4 \over \rho_\alpha }+ C ) + {W(\alpha)}  \RB  .  \nonumber
\end{align} 
\label{CHMapprox3}\end{theorem}

To prove Theorem~\ref{thm:energyisdelta}, we
 make precise (and quantitative) an argument found in \cite{CJ}.    The first step is a moment estimate on the Ginzburg-Landau energy about the vortex core.  

\begin{lemma}
If $\| J(u) - \pi \delta_0 \|_{\dot{W}^{-1,1}(B_r)} \leq {r\over 4}$ and $\LV \int_{B_r} e_\e(u) - \pi \ln { r \over \e} \RV  = K_0$ then
there exists $\xi \in B_{r/2}(0)$ and a constant $C$, independent of $K_0$ and $u$, such that 
\begin{equation} \label{momentCalcBall}
\int_{B_r} |x - \xi| {e_\e(u)}   \leq {r C ( K_0 + 1 )} .
\end{equation}
\label{singlevortexfirstmomentlemma}
\end{lemma}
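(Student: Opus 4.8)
The plan is to show that the two hypotheses force the energy measure $e_\e(u)\,dx$ on $B_r$ to concentrate, up to a diffuse remainder of total mass $O(K_0+1)$, at a single quantized vortex core near some $\xi\in B_{r/2}$, and then to read the first--moment bound off a layer--cake computation. I would first reduce to the main regime: if $\e/r$ is not below a universal threshold, or if $K_0\ge\log(r/\e)$, then $E:=\int_{B_r}e_\e(u)\,dx\le\pi\log(r/\e)+K_0\lesssim 1+K_0$, so $\int_{B_r}|x-\xi|\,e_\e(u)\le rE$ with $\xi=0\in B_{r/2}$ already gives the claim. Under these reductions $\pi\log(r/\e)-K_0\le E\le(\pi+1)\log(r/\e)$.

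\emph{Step 1 (locating $\xi$ and a one--sided lower bound).} I would run the Sandier--Jerrard ball construction in $B_{3r/4}$ with energy budget $E$ --- the mechanism underlying Propositions~\ref{gammastabilityprop} and~\ref{CHMapprox2}, here usable under the weaker hypothesis at hand --- obtaining disjoint balls $\{B_{r_i}(p_i)\}$ covering $\{|u|\le\tfrac12\}\cap\overline{B_{3r/4}}$ with $\sum_i r_i\le C\e\sqrt E$, degrees $d_i=\deg(u;\partial B_{r_i}(p_i))$, and $J(u)=\pi\sum_i d_i\delta_{p_i}$ up to a $\dot W^{-1,1}$ error $\lesssim\e E$. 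Since the $\dot W^{-1,1}$ norm of a difference of atomic measures is $\pi$ times a minimal--connection length, comparing with $\|J(u)-\pi\delta_0\|_{\dot W^{-1,1}(B_r)}\le r/4$ forces the net degree $\sum_i d_i=1$ (surplus mass would be shipped to $\partial B_r$ at cost $>r/4$) and places every $p_i$ with $d_i\ne0$ in $B_{r/9}(0)$; comparing instead with the upper bound $E$ (two or more vortices spread over a distance $\sigma$ cost $\ge 2\pi\log(\sigma/\e)-C$) confines the active $p_i$ to a single ball $B_{\sigma_0}(\xi)$ about their barycenter $\xi\in B_{r/2}(0)$, with $\sigma_0=o(r)$ controlled by $\sqrt{r\e}$ times factors sub--polynomial in $r/\e$. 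As $\{|u|\le\tfrac12\}\cap B_{3r/4}\subset B_{\sigma_0}(\xi)$, Jerrard's lower bound (with its universal constant), applied on slightly shrunk good radii $t'\in[t-\sigma_0,t]$ on which $|u|\ge\tfrac12$ on $\partial B_{t'}(\xi)$ and $\deg(u;\partial B_{t'}(\xi))=1$, yields
\[
\int_{B_t(\xi)}e_\e(u)\,dx\ \ge\ \pi\log\tfrac{t}{\e}-C\qquad\text{for all }t\in[4\sigma_0,\,r/2].
\]

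\emph{Step 2 (layer cake).} With $\xi\in B_{r/2}$, so $B_r\subset B_{3r/2}(\xi)$, I would write $|x-\xi|=\int_0^{3r/2}\mathbf 1_{\{t<|x-\xi|\}}\,dt$ and apply Fubini:
\[
\int_{B_r}|x-\xi|\,e_\e(u)\,dx=\int_0^{3r/2}g(t)\,dt,\qquad g(t):=\int_{B_r\setminus B_t(\xi)}e_\e(u)\,dx.
\]
On $[4\sigma_0,\,r/2]$, $B_t(\xi)\subset B_r$, so $g(t)=E-\int_{B_t(\xi)}e_\e(u)\le\pi\log(r/t)+K_0+C$; since $\int_0^{r/2}\log(r/t)\,dt\le Cr$, this range contributes $\le Cr(K_0+1)$. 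On $[r/2,\,3r/2]$, $B_{r/4}(\xi)\subset B_t(\xi)\cap B_r$ with $r/4\ge 4\sigma_0$, so $g(t)\le E-(\pi\log\tfrac{r}{4\e}-C)\le\pi\log 4+K_0+C$, contributing $\le r(K_0+C)$. On $[0,\,4\sigma_0]$, crudely $g(t)\le E\le(\pi+1)\log(r/\e)$, contributing $\le 4\sigma_0(\pi+1)\log(r/\e)\le Cr$ since $\sigma_0\log(r/\e)=o(r)$ in the present regime. Summing the three ranges gives $\int_{B_r}|x-\xi|\,e_\e(u)\,dx\le Cr(K_0+1)$.

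\emph{Main obstacle.} The delicate part is Step 1: turning the single weak scale--$r$ input $\|J(u)-\pi\delta_0\|_{\dot W^{-1,1}(B_r)}\le r/4$ (together with the energy bound) into a bona fide lower bound for $\int_{B_t(\xi)}e_\e(u)$ valid down to the core scale $\sigma_0\ll r$ and centered at one point $\xi\in B_{r/2}$ --- the degree/mass--balance bookkeeping certifying a single, tightly clustered net vortex. This is essentially the argument powering Proposition~\ref{CHMapprox2}, so I would either invoke that proof or reproduce the short ball--construction estimate; once it is in hand, Step 2 is mechanical. As an alternative to the explicit lower bound, one can feed $\xi$ and the $B_r$--excess--energy bound $D(\xi)\lesssim K_0+1$ into Proposition~\ref{gammastabilityprop}, split $\int_{B_r}|x-\xi|\,e_\e(u)$ at the scale $\sigma^*$ there, and use that $\int_{B_r}|x-\xi|\,|j(u_\star)|^2\lesssim r$ because the $|x-\xi|^{-1}$ singularity of $j(u_\star)$ at $\xi$ is integrable against the weight $|x-\xi|$.
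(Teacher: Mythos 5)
Your proposal has the same skeleton as the paper's proof: locate a point $\xi\in B_{r/2}$ around which the energy admits the sharp lower bound $\pi\log(\cdot/\e)$ with an error of size $K_0+C$, subtract this from the total energy hypothesis to bound the energy at distance $\ge t$ from $\xi$ by $\pi\log(r/t)+CK_0+C$, and then integrate over scales (your layer-cake in $t$ is the paper's dyadic annulus sum in disguise). The real difference is how the key input is obtained. The paper's proof is essentially two lines long because it cites Theorem~1.2$'$ of \cite{JSp}, which, under exactly your two hypotheses (the $\dot W^{-1,1}$ bound on $J(u)-\pi\delta_0$ in $B_r$ and the energy bound with defect $K_0$), already produces $\xi\in B_{r/2}$ together with the annular lower bound $\int_{B_\tau\setminus B_\sigma(\xi)}e_\e(u)\ge \pi\log(\tau/\sigma)-K_0-C$ for all $\e\le\sigma<\tau<r-|\xi|$; no ball construction needs to be rerun. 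You instead propose to re-derive an equivalent lower bound from scratch, and that is where your sketch is soft: the containment $\{|u|\le\tfrac12\}\cap B_{3r/4}\subset B_{\sigma_0}(\xi)$ is not literally true (zero-degree bad balls can sit anywhere, so you must argue through a good-radii selection, as you gesture at), the $\dot W^{-1,1}$ bound alone does not force dipole components into $B_{r/9}$ (their contribution must be killed by the energy budget, with constants exponential in $K_0$ that are only tamed because you have already reduced to $K_0\lesssim\log(r/\e)$), and the sharp constant $\pi$ in the single-vortex lower bound is essential (a crude $\pi/4$-type circle estimate would destroy the $O(K_0+1)$ error), so you genuinely need the full Jerrard machinery you allude to. Since you explicitly say you would either invoke the existing refined Jacobian estimate or reproduce it, the argument is correct as a plan; invoking \cite{JSp} as the paper does simply buys you Step~1 for free, after which your Step~2 and the paper's dyadic computation are interchangeable.
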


\begin{proof}

By Theorem 1.2' of \cite{JSp} then there exists $\xi \in B_{r/2} (0)$ such that for any $\tau <  r - |\xi|$ and $\e \leq \sigma < \tau$,
\begin{equation}  \label{vortexenergyJSP}
\int_{B_{\tau}(\xi) \backslash B_\sigma(\xi)} e_\e(u)  \geq \pi \ln { \tau \over \sigma} - K_0 - C.
\end{equation}

From \eqref{vortexenergyJSP} we see that 
\begin{equation} \label{vortexenergylowerboundannulus}
\int_{B_\tau(\xi) \backslash B_\e(\xi) } e_\e(u)  \geq \pi \ln { \tau \over \e} - K_0  - C,
\end{equation}
so from \eqref{vortexenergylowerboundannulus} and the assumption on the energy
\begin{equation*} \label{vortexenergyupperboundannulus}
\begin{split}
\int_{B_\e(\xi)} e_\e(u)  + \int_{B_r (0) \backslash B_\tau(\xi)} e_\e(u) 
&  \leq \pi \ln {r \over \e} + K_0 - \pi \ln {\tau \over \e} + K_0 + C  \\
& \leq \pi \ln { r \over \tau} + C K_0 + C.
\end{split}
\end{equation*}

Now we look at the energy in the annular set $B_{2^{-j}} \backslash B_{2^{-(j+1)}}(\xi) \subset B_r \backslash B_\e(\xi)$.  In particular
\begin{align*}
\int_{B_{2^{-j}} \backslash B_{2^{-(j+1)}}(\xi)} e_\e(u)  
& = \int_{B_r} e_\e(u)  - \int_{B_r \backslash B_{2^{-j}}(\xi)} e_\e(u)  - \int_{B_{2^{-(j+1)}}(\xi)} e_\e(u)  \\
& \leq \pi \ln {r \over \e} + K_0 - \pi \ln {r \over 2^{-j}} + C(K_0 + 1) - \pi \ln {2^{-(j+1)}\over \e} + C(K_0 + 1) \\
& = \pi \ln {2 } + 2 C (K_0 + 1)  = C(K_0 +1 ).
\end{align*}

Next we prove the claim.  If we let $2^{-M_\e} = {r \over 2}$ and $2^{- N_\e} = \e$ then
\begin{align*}
\int_{B_r} |x - \xi| e_\e(u)  
& = \int_{B_\e} |x - \xi| e_\e(u)  + \int_{B_r \backslash B_{r/2}(\xi)} |x - \xi| e_\e(u)   \\
& \quad + \sum_{j=M_\e}^{N_\e} \int_{B_{2^{-j}}\backslash B_{2^{-(j+1)}}(\xi)} |x - \xi| e_\e(u)   \\
& \leq r \pi \ln {r \over r/2} C (K_0 + 1) 
+ \sum_{j=M_\e}^{N_\e} 2^{-j} C (K_0)  \\
& \leq r C ( K_0  + 1)
\end{align*}
since $\sum_{j = M_\e}^{N_\e} 2^{-j} \leq r$.

\end{proof}


In order to establish the proof of the theorem, we use the following 
local
energy lower bound:

\begin{lemma}[Jerrard-Spirn, {Theorem 1.3 of} \cite{JSp}]  \label{JSpSingleVortexLower}
There exists an absolute constant $C > 0$ such that if $u \in H^1(B_\sigma)$ satisfies 
\[
\LN J(u) - \pi\delta_0 \RN_{\dot{W}^{-1,1}(B_\sigma)} \leq {\sigma\over 4}
\]
then
\begin{equation*}
0 \leq D_{B_\sigma} + C {\e \over \sigma} \sqrt{ \log{ \sigma \over \e}} + { C \over \sigma} \LN J(u) - \pi \delta_0 \RN_{\dot{W}^{-1,1}(B_\sigma)} .
\end{equation*}
\end{lemma}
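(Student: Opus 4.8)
The statement is the sharp single--vortex lower bound of Jerrard--Spirn \cite{JSp}; I will sketch the route I would take to establish it. Replacing $u$ by $\bar u$ if necessary we may assume the vortex has degree $+1$, and after the dilation $x\mapsto x/\sigma$ we may take $\sigma=1$, with $\e$ replaced by $\e/\sigma$; one checks directly that $\int_{B_\sigma}e_\e(u)\,dx$, and hence $D_{B_\sigma}$, is invariant, that the hypothesis becomes $\|J(u)\pm\pi\delta_0\|_{\dot W^{-1,1}(B_1)}\le\tfrac14$, and that $\tfrac1\sigma\|J(u)-\pi\delta_0\|_{\dot W^{-1,1}(B_\sigma)}=\|J(u)-\pi\delta_0\|_{\dot W^{-1,1}(B_1)}=:m$. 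Since $D_{B_1}=\int_{B_1}e_\e(u)-\pi\log\tfrac1\e-\gamma$, the goal is the energy estimate $\int_{B_1}e_\e(u)\ge \pi\log\tfrac1\e+\gamma-C(m+\e\sqrt{|\log\e|})$. The plan is the classical far--field/core split: fix a parameter $\Lambda=\Lambda(\e)$ (ultimately $\Lambda\asymp|\log\e|$) and bound $\int_{B_1\setminus B_{\Lambda\e}}e_\e(u)$ and $\int_{B_{\Lambda\e}}e_\e(u)$ separately, each with an explicit constant.

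For the far field I would first use the smallness of $m$ to localize the vorticity: a vortex--ball construction truncated at scale $\sim\e$ (as in \cite{JSp}) shows that $\{|u|\le\tfrac12\}$ is covered by balls of total radius $\lesssim m+C\e$, so for all radii $t\in[\Lambda\e,1]$ outside a set of measure $\lesssim m+C\e$ the circle $\partial B_t$ stays in $\{|u|>\tfrac12\}$ and carries winding number $+1$. On such a circle the one--dimensional estimate $\tfrac12\int_{\partial B_t}|\partial_\tau u|^2\ge \tfrac\pi{2t}\inf_{\partial B_t}|u|^2$, combined with the radial and potential terms and integrated in $t$ by Fubini, gives $\int_{B_1\setminus B_{\Lambda\e}}e_\e(u)\ge\pi\log\tfrac1{\Lambda\e}-C(m+\e\sqrt{|\log\e|})$. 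For the core I would rescale $y=x/\e$, so that $w(y)=u(\e y)$ solves the Ginzburg--Landau equation with parameter $1$ on $B_\Lambda$, and its trace on $\partial B_\Lambda$ is, up to the same controlled errors, of modulus near $1$ and degree $+1$; comparison with the optimal radial profile then yields $\int_{B_\Lambda}e_1(w)\ge\pi\log\Lambda+\gamma-o_\Lambda(1)$, which is exactly how the Bethuel--Brezis--H\'elein constant $\gamma$ is defined. Summing the two contributions and choosing $\Lambda\asymp|\log\e|$, so that $o_\Lambda(1)$ is absorbed into $C\e\sqrt{|\log\e|}$, gives the desired bound, and undoing the rescaling produces the factors $\tfrac\e\sigma\sqrt{\log\tfrac\sigma\e}$ and $\tfrac1\sigma\|J(u)-\pi\delta_0\|_{\dot W^{-1,1}(B_\sigma)}$.

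I expect the main obstacle to be the quantitative interface matching between the two regimes: one must show that the \emph{weak} control given by the $\dot W^{-1,1}$ defect $m$ genuinely forces both $|u|\approx1$ on most of the annulus $B_1\setminus B_{\Lambda\e}$ and the correct winding number on the relevant circles, with every constant made explicit; this is where the refined covering estimates of \cite{JSp} are essential and where the term $\tfrac1\sigma\|J(u)-\pi\delta_0\|_{\dot W^{-1,1}}$ in the conclusion is generated. The only other delicate point, the sharp core constant $\gamma$, is handled by the standard Bethuel--Brezis--H\'elein comparison once the boundary data on $\partial B_{\Lambda\e}$ has been pinned down, and requires no new ideas beyond bookkeeping of the $o_\Lambda(1)$ error.
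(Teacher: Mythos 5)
The paper does not prove this lemma at all: it is imported verbatim from Jerrard--Spirn \cite{JSp} (their sharp single-vortex lower bound), so there is no internal proof to compare against. Your sketch is a faithful reconstruction of the strategy actually used in \cite{JSp}: the scaling reduction to $\sigma=1$ (your bookkeeping of how $D_{B_\sigma}$ and the $\dot W^{-1,1}$ norm transform is correct), the annulus/core split, circle-by-circle degree estimates on the good radii, and the Bethuel--Brezis--H\'elein comparison pinning down $\gamma$ on the core. Two quantitative remarks. First, your circle estimate $\tfrac12\int_{\partial B_t}|\partial_\tau u|^2\ge \tfrac{\pi}{2t}\inf_{\partial B_t}|u|^2$ is off by a factor of $2$: Cauchy--Schwarz on the phase gives $\tfrac12\int_{\partial B_t}|\partial_\tau u|^2\ge\tfrac{\pi}{t}\inf_{\partial B_t}|u|^2$ for degree $\pm1$, and the weaker constant would only yield $\tfrac\pi2\log\tfrac1{\Lambda\e}$, which does not suffice. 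Second, to reach the sharp constant $\gamma$ one cannot simply use $\inf_{\partial B_t}|u|^2$; the loss $1-|u|^2$ on each circle must be traded against the potential term (Jerrard's $\lambda_\e$-function argument), and it is exactly this trade that produces the $C\tfrac{\e}{\sigma}\sqrt{\log(\sigma/\e)}$ error in the conclusion, while the measure of the bad radii controlled by the vortex-ball construction produces the $\tfrac{C}{\sigma}\|J(u)-\pi\delta_0\|_{\dot W^{-1,1}}$ term --- you correctly identify this matching as the crux, and it is precisely the content of the refined estimates of \cite{JSp} rather than something recoverable by soft arguments.
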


We now present the

\begin{proof}[Proof of Theorem~\ref{CHMapprox3}]

From the proof of Proposition~\ref{gammalocalizationprop} in \cite{JSp2},  $C_\star$ satisfies ${\rho_\alpha \over C_\star } \leq {1\over 2}$.  Then choosing 
$\sigma = {\rho_\alpha \over 2  C_\star n^4 }$ we find that 
\begin{equation} \label{chaininequality}
4 \|J(u) - \sum \pi \delta_{\alpha_j} \|_{\dot W^{-1,1}} \leq {\rho_\alpha \over 2 C_\star n^5}  = {\sigma \over n} \leq \sigma \leq {\rho_\alpha \over n K_\star },
\end{equation}
{where $K_\star$ is the constant from Proposition~\ref{gammastabilityprop}.}
Therefore  \eqref{chaininequality} implies that
\begin{equation}  \label{localjacobianboundProofEnergyDelta}
\| J(u) - \pi \delta_{\alpha_j} \|_{\dot{W}^{-1,1}(B_\sigma(\alpha_j))} \leq {\sigma \over 4}.
\end{equation} 

1.  Given the choice of $\sigma$ we claim that the following bounds hold
\begin{align}\label{energyexcessballFirstest} 
- {C\over n}&  \leq D_{B_\sigma(\alpha_j)} \leq D(\alpha)  + C \\
\label{energyoutsidevortexballs} 
\int_{\Omega_\sigma} e_\e(u)  &  \leq \pi n \log {1\over \sigma} + D(\alpha) + {W (\alpha) } + C  . 
\end{align}

In order to prove \eqref{energyexcessballFirstest} we note that \eqref{Ebds}, \eqref{chaininequality}, \eqref{localjacobianboundProofEnergyDelta} and Lemma~\ref{JSpSingleVortexLower} imply 
\begin{align*}
0 & \leq { D_{B_\sigma (\alpha_j)} + C {\e \over \sigma} \sqrt{\log{\sigma \over \e}} 
+ {C \over \sigma} \LN J(u) - \pi  \delta_{\alpha_j} \RN_{\dot{W}^{-1,1}(B_\sigma(\alpha_j))} } \\
&  \leq {D_{B_\sigma(\alpha_j)}  + C \e {n^5 \over \rho_\alpha} \logep 
+ { C \over n}  \leq D_{B_\sigma(\alpha_j)} +  {C\over n}. }
  \end{align*}    

To prove the upper bound we use the following inequality
that can be found in the proof of Theorem~3 in \cite{JSp2}.
\begin{equation*} \label{excessenergyfixedtimevortexballs} \begin{split}
D(\alpha)
& = \int_{\Omega_\sigma(\alpha)} [e_\e(u) - e_\e(u_\star) ] + \sum_{j=1}^n D_{B_\sigma(\alpha_i)} + O ( 
( { n \sigma \over \rho_\alpha} )^2 ) \\
& \geq \int_{\Omega_\sigma(\alpha)} e_\e(|u|) + {1\over 4} \LV { j(u) \over |u|} - j(u_\star) \RV^2 
+ \sum_{j=1}^n D_{B_\sigma(\alpha_i)} - C
\end{split}
\end{equation*}
since $\sigma = {\rho_\alpha \over 2 C_\star n^4} \leq {\rho_\alpha \over n}$.  
Thus $\sum_{i=1}^n D_{B_\sigma(\alpha_i)} \leq D(\alpha) + C$, and hence 
\begin{equation} \label{excessballestimateviaglobal}
D_{B_\sigma(\alpha_i)} \leq D(\alpha) + C,
\end{equation}
since $D_{B_\sigma(\alpha_j)}  \geq - {C \over n}$ for each $j\in \{1,\ldots,n\}$.  This 
finishes the proof of \eqref{energyexcessballFirstest}.

For \eqref{energyoutsidevortexballs} we again write $D(\alpha) = \int_{\Omega_\sigma} e_\e(u) - e_\e(u_\star)  
+ \sum D_{B_\sigma(\alpha_i)} + O ( 
( { n \sigma \over \rho_\alpha} )^2 )$ and use Lemma 12 of \cite{JSp2} to estimate 
\begin{align*}
\int_{\Omega_\sigma} e_\e(u)  & = D(\alpha) - \sum D_{B_\sigma(\alpha_i)}  + \int_{\Omega_\sigma} e_\e(u_\star)  
+  O (  ({ n \sigma \over \rho_\alpha} )^2 ) \\
& = D(\alpha) - \sum D_{B_\sigma(\alpha_i)}  + O ( ( { n \sigma \over \rho_\alpha} )^2 ) + \pi n \log {1\over \sigma} +  
{W(\alpha)}  + O({n^3 \sigma^2 \over \rho_\alpha^2}) \\
& \leq \pi n \log {1\over \sigma} + D(\alpha) + {W(\alpha)} + C  
\end{align*}
where we again use that $\sigma= {\rho_\alpha \over 2 {C_\star} n^4 }$.   

2.  From Lemma~\ref{singlevortexfirstmomentlemma}, \eqref{energyexcessballFirstest}, and \eqref{localjacobianboundProofEnergyDelta} there exists a $\xi_j$ in $B_{\sigma/2}(\alpha_j)$ for each 
$j = \{1, \ldots, n\}$ such that 
\begin{equation}  \label{proofenergyDeltaSingleMomentSigma}
\int_{B_\sigma(\alpha_j)} |x - \xi_j| { e_\e (u)}  \leq C \sigma \LC D(\alpha) + C \RC.
\end{equation}
Next we choose an arbitrary $\phi \in W_0^{1, \infty}(\Omega)$, then
\begin{align*}
 \LV \int_\Omega \phi \LC { e_\e(u) \over \logep} - \pi \sum \delta_{\xi_j} \RC  \RV  
 &\leq \LV \int_{\Omega_\sigma} \phi { e_\e(u) \over \logep}  \RV 
+ \sum_{j=1}^n \LV \int_{B_\sigma(\alpha_j)} \phi \LC {e_\e(u) \over \logep} - \pi \delta_{\xi_j} \RC  \RV \\
& = A + B .
\end{align*}

We first handle $A$. {Since $\LN \phi \RN_{L^\infty(\Omega)} \leq \operatorname{diam}(\Omega)$, then} from \eqref{energyoutsidevortexballs}
\begin{align}
\LV \int_{\Omega_\sigma} \phi(x) {e_\e(u) \over \logep} \RV
& \leq {\operatorname{diam}(\Omega) \over \logep} \LB \pi n \log{1\over \sigma} +  D(\alpha) + {W(\alpha)} + C \RB.
\label{boundoutsidevortexenergydensity}
\end{align}

Next we estimate $B$.  Without loss of generality assume $\alpha_j =0$ and $\xi_j = \xi$ and again choosing 
$x_0 \in \p \Omega$, then
\begin{align*}
\LV \int_{B_\sigma} \phi \LC  {e_\e (u) \over \logep} - \pi \delta_\xi \RC  \RV
& \leq \LV \int_{B_ \sigma} \LC \phi(x) - \phi(\xi) \RC {e_\e(u) \over \logep}  \RV
+ \LV \phi(\xi)\RV \LV \int_{B_ \sigma} {e_\e (u) \over \logep  } - \pi \delta_\xi  \RV \\
& \leq \LV \int_{B_ \sigma} |x - \xi |{e_\e(u) \over \logep}  \RV 
+{\operatorname{diam}(\Omega)} \LV \int_{B_ \sigma} {e_\e (u) \over \logep  } - \pi \delta_\xi  \RV \\
& = B_1 + B_2.
\end{align*}
From \eqref{proofenergyDeltaSingleMomentSigma} we have 
\[
B_1 \leq {C \over \logep} \sigma \LC D(\alpha) + C \RC.
\]
Whereas, $\LV \int_{B_\sigma} {e_\e(u)} - \pi \logep \RV \leq \pi \log {1\over \sigma} + D_{B_\sigma} + \gamma$ implies
\[
B_2 \leq {\operatorname{diam}(\Omega) \over \logep} \LB \log{1\over \sigma} + D(\alpha) + C \RB.
\]
Since $\sigma = {\rho_\alpha \over K_2 n^4} \leq 1$ then
\[
B_1 + B_2 \leq {C \over \logep} \LB \log{K_2 n^4 \over \rho_\alpha}  + D(\alpha) + C \RB.
\]

Combining with the bound \eqref{boundoutsidevortexenergydensity} on $A$ with the bounds on $B$ yields
\begin{align*}
\LV \int_\Omega \phi \LC {e_\e(u) \over \logep} - \sum \pi \delta_{\xi_j} \RC  \RV
& \leq {C \over \logep} \LB n (D(\alpha) +  \log{ n^4 \over \rho_\alpha } + C) +{W(\alpha)} \RB,
\end{align*}
and  \eqref{approxenergymeasuredelta} follows.

\end{proof}

\section{Quantitative bounds on the kinetic energy}
\label{sec:quantkinetic}
We now present a kinetic energy bound for fixed $\e$.  Similar bounds with errors of the form $o_\e(1)$ can be found 
in \cite{Jwave, LinWave, SSProd}.
Our method of proof is inspired by the choice of test function found in the proof of Theorem 3 in \cite{KMM}.

\begin{theorem} \label{thm:kineticenergybound}
Let $u(t)$ be a smooth solution to \eqref{tdgl} and $a(t)$ a solution to \eqref{ODE} on $[0,T]$
for some $T\gtrsim 1$ with $\rho_{a(t)} \geq \rho_\star$ for all $0 \leq t \leq T$ {and assume}
\begin{equation} \begin{split}  \label{kineticenergyassumptions}
& \LN J(u(t)) - \sum_{j=1}^n \pi \delta_{a_j(t)} \RN_{\dot{W}^{-1,1}}
\leq \frac{\rho_\star}{8 C_\star n^5}, \\
& D(a(t)) \leq 1, \ 
 \ \hbox{ and } \  {n^{14} \over \rho_\star^4} \leq \logep
\end{split}
\end{equation}
with $C_\star$ the constant from Proposition~\ref{gammalocalizationprop}.
Then there exists $\xi(t) = (\xi_1(t),\ldots, \xi_n(t))$ such that
\begin{equation} \label{goodpositionskineticenergyproof}
\begin{split}
  \LN {e_\e(u(t))\over \logep} - \sum_{j=1}^n \pi  \delta_{\xi_j(t)} \RN_{\dot W^{-1,1}} & \lesssim {n^2  \over \rho_\star^2 \logep},
  \end{split}
\end{equation}
and {for any $0 \leq t_1 < t_2 \leq T$}
\begin{equation}
  \label{eq:mainkinbd}
\pi\int_{t_1}^{t_2}\sum_{j=1}^n |\dot a_j|^2  
 \le 
\int_{t_1}^{t_2} \int_\Omega  { |\p_t u|^2 \over \logep}    
 \quad  + C \A_\e \LB  \sup_{t \in[t_1,t_2]} \sum_j \LV \xi_j(t) - a_j(t) \RV + {1\over \logep^{1\over2}} \RB,
\end{equation}
where
\begin{equation*} \label{Aepsilondefinition}
\begin{split}  
\A_\e & := { n^3  T \over \rho_\star^3}   
\end{split}
\end{equation*}
and 
 $C$ depends only on $\Omega$ {and $\varphi_\star$}.

Furthermore, if $D(a(0)) \lesssim { n^3 T \over \rho_\star^3 \logep^{1\over2}}$, 
then
\begin{equation}  \label{excessenergyestimateviaetacontrol}
D(\xi(t)) \lesssim  \A_\e \LB \sup_{s\in[0,t]} \sum_{j=1}^n |\xi_j(s) - a_j(s)| +  \logep^{-{1\over2}} \RB ,
\end{equation}
and
\begin{align}
\label{gstab.ref1}
\int_{\Omega_{\rho_\star}(\xi)} e_\e(|u|) + \LV { j(u) \over |u|} - j(u_\star)(\xi(t)) \RV^2  & \lesssim
\A_\e  \LB \sup_{s\in[0,t]} \sum_{j=1}^n |\xi_j(s) - a_j(s)| +  \logep^{-{1\over2}} \RB,   \\
\label{gstab.ref2}
\LN j(u) - j(u_\star(\xi,t)) \RN_{L^{4\over3}} & \lesssim \sqrt{ \A_\e \LB \sup_{s\in[0,t]} \sum_{j=1}^n |\xi_j(s) - a_j(s)| +  \logep^{-{1\over2}} \RB } .
\end{align}

\end{theorem}

We prove a slightly stronger fact that the kinetic energy, localized at the vortex balls, is bounded below by the ODE kinetic energy, see \eqref{strongerkineticenergylowerbound} below.  

A similar theorem was proved in \cite{KurzkeSpirnMRI} for a single vortex that stays an $O(1)$ distance from the boundary for an $O(1)$ time.  Here we prove a much more explicit estimate.  The major tool to establishing a finite-$\e$ bound on the kinetic energy is the following optimal result on the equipartitioning of Ginzburg-Landau energy, which improves related results in \cite{LinWave}
and \cite{SSProd}.

\begin{proposition} [Kurzke-Spirn \cite{KurzkeSpirnMRI}] \label{stressenergy}
Suppose $\LN J(u) - \pi \delta_0 \RN_{\dot{W}^{-1,1}(B_\sigma)} \leq {\sigma \over4} $ and $\int_{B_\sigma} e_\e (u)  \leq \pi \log {\sigma \over \e} + K_0$ then
\begin{equation} \label{stresstensorest}
\LV {1 \over 2} \int_{B_\sigma} \begin{pmatrix} \LV \p_{x_1} u \RV^2 &  \LC \p_{x_1} u , \p_{x_2} u \RC  \\  \LC \p_{x_1} u , \p_{x_2} u \RC  & \LV \p_{x_2} u \RV^2 \end{pmatrix} 
 - \begin{pmatrix} {\pi \over 2} \log {\sigma \over \e} & 0 \\ 0 & {\pi \over 2} \log {\sigma \over \e}  \end{pmatrix} \RV
\leq \sqrt{K_1 \log {\sigma \over \e} } 
\end{equation}
where $K_1 = C (C + K_0) e^{K_0/\pi}$ and $C$ is a universal constant.
\end{proposition}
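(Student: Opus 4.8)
This is a purely energetic statement — only the two displayed hypotheses are used, not the equation — so the plan is variational. Write $\rho=|u|$ and, where $\rho>0$, use the pointwise splitting $\nabla u=(\nabla\rho+i\,\tilde j)\tfrac{u}{\rho}$ with $\tilde j:=j(u)/\rho$, so that $|\partial_k u|^2=|\partial_k\rho|^2+\tilde j_k^2$ and $(\partial_1 u,\partial_2 u)=\partial_1\rho\,\partial_2\rho+\tilde j_1\tilde j_2$. Under the hypotheses the set $\{\rho\le\tfrac12\}$ is contained in a ball of radius $O(\e)$ about the effective vortex and carries only $O(1)$ energy, so it contributes negligibly to every integral and $\tilde j$ is a genuine $L^2$ field on $B_\sigma$. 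Thus the matrix in \eqref{stresstensorest} equals $\tfrac12\int_{B_\sigma}\nabla\rho\otimes\nabla\rho+\tfrac12\int_{B_\sigma}\tilde j\otimes\tilde j$, every entry of the first summand being bounded by $\tfrac12\int_{B_\sigma}|\nabla\rho|^2$. Hence the proposition reduces to: (a) $\int_{B_\sigma}e_\e(\rho)=\int_{B_\sigma}[\tfrac12|\nabla\rho|^2+\tfrac1{4\e^2}(1-\rho^2)^2]\le C(K_0+1)$; and (b) $\tfrac12\int_{B_\sigma}\tilde j\otimes\tilde j$ lies within $\sqrt{K_1\log(\sigma/\e)}$ of $\tfrac\pi2\log(\sigma/\e)\,I$.

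Step (a) is a squeeze. The hypothesis $\|J(u)-\pi\delta_0\|_{\dot W^{-1,1}(B_\sigma)}\le\sigma/4$ puts us in the scope of the Jerrard--Spirn annular lower bound \eqref{vortexenergyJSP} (equivalently Lemma~\ref{JSpSingleVortexLower}), which gives $\int_{B_\sigma}\tfrac12\tilde j^2=\int_{B_\sigma}(\tfrac12|\nabla u|^2-\tfrac12|\nabla\rho|^2)\ge\pi\log(\sigma/\e)-C(K_0+1)$ after discarding the core. Subtracting this from the total-energy upper bound $\pi\log(\sigma/\e)+K_0$ bounds $\int_{B_\sigma}e_\e(\rho)$ by $C(K_0+1)$; as a byproduct $\int_{B_\sigma}\tilde j^2=2\pi\log(\sigma/\e)+O(K_0+1)$, which step (b) will use.

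For (b), pass to polar coordinates centered at the effective vortex $\xi\in B_{\sigma/4}$ (the offset enters only at lower order, since all circles in play still enclose $\xi$ with degree one) and write $\tilde j=\tilde j_r e_r+\tilde j_\tau e_\tau$. The mechanism is a two-scale equipartition. For all but a $\tfrac{dr}{r}$-small set of radii $r\in(\e,\sigma)$ the circle $\partial B_r$ is ``good'': $u$ is nonvanishing on it, carries degree one, and — by \eqref{vortexenergyJSP} together with the total upper bound, exactly as in the dyadic-annulus computation in the proof of Lemma~\ref{singlevortexfirstmomentlemma} — has energy excess bounded in a $\tfrac{dr}{r}$-integrable way. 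On a good circle, the degree identity $\oint_{\partial B_r}\partial_\tau\phi\,ds=2\pi$ combined with near-minimality of the circle energy forces, by Cauchy--Schwarz, both $\|\partial_\theta\phi-1\|_{L^2(d\theta)}$ and $\|\rho-1\|_{L^2(d\theta)}$ to be small, with squares controlled by that circle's energy excess; the same degree/energy balance also pins the radial part, $\int_{B_\sigma}\tilde j_\tau^2\ge2\pi\log(\sigma/\e)-O(K_0+1)$, whence by (a) $\int_{B_\sigma}\tilde j_r^2=O(K_0+1)$. Now expand $\tilde j_1=\tilde j_r\cos\theta-\tilde j_\tau\sin\theta$, $\tilde j_2=\tilde j_r\sin\theta+\tilde j_\tau\cos\theta$: the $\tilde j_\tau^2$-terms produce the diagonal main term $\tfrac\pi2\log(\sigma/\e)$ and, through the $L^2(d\theta)$-closeness of $\rho\,\partial_\theta\phi$ to $1$ tested against the mean-zero trigonometric polynomials $\cos 2\theta$ and $\sin 2\theta$, contribute only $\sqrt{K_1\log(\sigma/\e)}$ to the off-diagonal and to the diagonal discrepancy (integrate $\tfrac{dr}{r}$, then Cauchy--Schwarz in $r$); every term containing $\tilde j_r$ is at most $\|\tilde j_r\|_{L^2}\|\tilde j_\tau\|_{L^2}\lesssim\sqrt{K_0\log(\sigma/\e)}\le\sqrt{K_1\log(\sigma/\e)}$; and every term containing $\nabla\rho$ is at most $\int_{B_\sigma}|\nabla\rho|^2=O(K_0)\le\sqrt{K_1\log(\sigma/\e)}$ once $\sigma/\e$ is large. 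Summing over the entries gives \eqref{stresstensorest}; the oddness of $(\partial_1 u,\partial_2 u)\approx-\tfrac{x_1x_2}{r^4}$ under $x_1\mapsto-x_1$ is what kills the off-diagonal to leading order.

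The main obstacle is the honest treatment of the exceptional radii in step (b): showing that outside a $\tfrac{dr}{r}$-negligible set the circle $\partial B_r$ is zero-free with the correct degree and near-minimal energy, and bounding the exceptional contribution by no more than $\sqrt{K_1\log(\sigma/\e)}$. This forces one to invoke the refined Jacobian / lower-bound machinery of Jerrard--Spirn \cite{JSp}, whose constants are exponential in the local energy excess — this is the source of the factor $e^{K_0/\pi}$ in $K_1$. Granting those estimates, the rest is Fubini and Cauchy--Schwarz.
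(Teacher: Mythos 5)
The paper itself contains no proof of this proposition: it is imported verbatim from the companion work \cite{KurzkeSpirnMRI}, so there is no internal argument to compare you against. Your outline is, in essence, the strategy of that reference: split $\nabla u$ into modulus and current parts, control $\int e_\e(|u|)$ and the radial current by the excess $K_0$, extract the diagonal main term $\tfrac\pi2\log\tfrac\sigma\e$ from the tangential current, and obtain the $\sqrt{K_1\log(\sigma/\e)}$ error by measuring, circle by circle, the deviation from the degree-one profile, testing against $\cos2\theta$ and $\sin2\theta$, and applying Cauchy--Schwarz in the radius, with the Jerrard--Spirn lower-bound machinery of \cite{JSp} handling the exceptional radii and producing the factor $e^{K_0/\pi}$. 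So the architecture is the right one and is consistent with the form of the constant $K_1$.

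Two points need repair before this is a proof rather than a sketch. First, the claim that $\{|u|\le\tfrac12\}$ sits in an $O(\e)$ ball and carries $O(1)$ energy is not correct uniformly in $K_0$: with excess $K_0$ the essential core radius is only bounded by $C\e e^{CK_0}$ (this is exactly where the exponential enters), so the ``negligible core'' step must be run at that scale and the $O(K_0+1)$ bookkeeping adjusted; you concede this in your final paragraph, but the body of the argument uses the stronger, unjustified claim. Second, the real content --- that on a good circle the excess over $\pi/r$ controls $\bigl\| \rho\,\p_\tau\phi-\tfrac1r \bigr\|_{L^2(d\ell)}^2$ and $\bigl\| |u|-1 \bigr\|_{L^2}^2$ (note the degree is computed with $j(u)/|u|^2$, not $j(u)/|u|$, so the potential term must be used to pass between them), and that the bad radii have quantitatively small logarithmic measure --- is only asserted; it is precisely the refined estimate of \cite{JSp} that \cite{KurzkeSpirnMRI} makes quantitative, and without it the Cauchy--Schwarz step has nothing to integrate. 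One further remark: your argument (correctly) bounds $\LV \int_{B_\sigma}(\p_{x_1}u,\p_{x_2}u)\RV$, i.e.\ the cancellation takes place under the integral sign; the off-diagonal entries as typeset, with the absolute value inside the integral, would be false, since for the canonical vortex $\int_{B_\sigma}\LV(\p_{x_1}u,\p_{x_2}u)\RV\sim 2\log(\sigma/\e)$, so your reading is the intended one.
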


We apply this equipartitioning result to the evolution identity for the
 energy
and  deduce a rate of convergence for the kinetic energy.

%
\begin{proof}[Proof of Theorem~\ref{thm:kineticenergybound}]
To prove this estimate we first use the hypotheses to extract better vortex positions.  We then use the differential identity \eqref{eqn:conservationenergy} along with a special test function to prove the kinetic energy bounds.  

1. 
We first prove a pair of crude bounds that enable us to use Theorem~\ref{thm:energyisdelta} in the previous section.  From \eqref{W.scale} we find that $ |W(a(t))| \leq C \LC n^3 + {n^2 \over \rho^2_\star} \RC$.  Therefore, for any $0\leq t \leq T$ we have 
$E_\e(u(t)) = D_\e(t) + W_\e(a(t))  \lesssim 1 + n^3+ {n^2 \over \rho_\star^2} + n  \logep $.  From    \eqref{kineticenergyassumptions} we have the very crude bounds $n, \rho_\star^{-1} \lesssim \logep$; and hence, $E_\e(u(t)) \lesssim \logep^4$.  
As $n\ge 1$, we may additionally assume $E_\e(u(t))\ge 1$ for all times. 
We easily see that 
\begin{equation}  \label{largeenergyprevention}
{n^5 \over \rho_\alpha} E_\e(u(t)) + {n^{10} \over \rho_\alpha^2} \sqrt{E_\e(u(t))} \lesssim \logep^{13} \leq {1\over\e}.
\end{equation}

Set \begin{equation*} \label{sigmadefinition}
\sigma = {\rho_\star \over 2 C_\star n^4}, 
\end{equation*}
then by \eqref{kineticenergyassumptions} and \eqref{largeenergyprevention} and for each $t_1 \leq t \leq t_2$ we can use  Proposition~\ref{gammalocalizationprop} and Theorem~\ref{thm:energyisdelta}.  In particular, 
 for each $t_1 \leq t \leq t_2$ there exists a $\xi(t) = (\xi_1(t), \ldots, \xi_n(t))$ such that 
 \eqref{goodpositionskineticenergyproof} holds with $|\xi_j -a_j| \leq \sigma$ for each $j = 1,\ldots, n$.  
 By \eqref{chaininequality}, $B_{2\sigma}(a_j(s))\cap B_{2\sigma}(a_k(s))=\emptyset$
for all $s\in[0,t]$
unless $j=k$, and $B_{2\sigma}(a_j(s))\cap\p\Omega=\emptyset$ for all $j$.

%
%
%

Next we prove an estimate on the kinetic energy.  Conservation of energy implies
\begin{align*}\label{eq:ptbound}
\int_0^t \int_\Omega { \LV  \p_t u \RV^2 \over \logep}  & 
=  E_\e(u_0) - E_\e(u(t))  = D (a(0)) + W_\e(a_0) - W_\e(a(t)) - D(a(t))\\
& = D (a(0)) + W(a_0) - W(a(t)) - D(a(t)).
\end{align*}
By a similar argument as above and using {$n \lesssim \rho_\star^{-2}$}, we find that 
\begin{equation}  \label{kineticenergyboundunboundedvortices}
\int_0^t \int_\Omega { \LV \p_t u \RV^2 \over {\pi\logep}}   \lesssim   {n^2 \over \rho^2_\star}  .
\end{equation}
%
%

2.  We now make the following claim.
Let $\chi\in C^\infty_c(\R^2)$ be a function such that 
$\chi\ge 0 $, $\chi\equiv 1$ on $B_\sigma(0)$, $\chi\equiv 0 $ in $\R^2\setminus B_{2\sigma}(0)$ 
{and $|D^k\chi|\le C \sigma^{-k}$ for $k=1,2$
for some constant $C$, then } 
\begin{equation}
  \label{eq:p1r}\begin{split}
& \LV {1 \over \logep} \int_{t_1}^{t_2} \int_\Omega \sum_{j=1}^n \chi(x-a_j(t)) \ \dot a_j(t)\cdot (\p_t u, \nabla u)   +  \pi \int_{t_1}^{t_2}\sum_{j=1}^n |\dot a_j(t)|^2  \RV 
\\
& \lesssim  { n^3 T \over \rho_\star^3}  \sup_{t\in[t_1,t_2]} \sum_{j=1}^n \LV \xi_j(t) - a_j(t) \RV 
 + {   n^{10} T \over  \rho_\star^5 \logep } + {  n^3  T \over \rho^3_\star  \logep^{1\over 2}}  
.\end{split}
\end{equation}

For any test function $\phi\in C^2([0,T]\times\Omegab)$ with compact support in $\Omega$ and any $0\le t_1\le t_2\le T$
we have 
\begin{equation} \begin{split} \label{eq:withphi}
&  \int_{t_1}^{t_2} \int_\Omega \p_t \LC \phi { e_\e(u) \over \logep } \RC  - 
  \int_{t_1}^{t_2}  \int_\Omega (\p_t \phi) { e_\e(u) \over \logep}   
  \\ 
 & = - \frac{1}{\logep}  \int_{t_1}^{t_2}  \int_\Omega \phi { |\p_t u|^2 \over \logep}  
  - {1\over \logep} \int_{t_1}^{t_2}  \int_\Omega \nabla \phi \cdot (\p_t u, \nabla u)  ,
\end{split} \end{equation}
as is easily seen by multiplying
 \eqref{eqn:conservationenergy} by $\ilogep$ and integrating by parts.

We now follow \cite{KMM} and set 
\[
\phi(t,x)= \sum_{j=1}^n \chi(x-a_j(t)) \ \dot a_j(t)\cdot(x-a_j(t))  .
\]
 Then we calculate, dropping
the $t$-dependence of $a$,
\begin{align*}
\nabla \phi(t,x) &= \sum_{j=1}^n (\dot a_j\cdot(x-a_j))\nabla \chi(x-a_j)+ \dot a_j \ \chi(x-a_j),
\\
\p_t \phi(t,x) &= \sum_{j=1}^n -\dot a_j \cdot \nabla\chi(x-a_j) \ \dot a_j\cdot(x-a_j)
\\ 
&\quad+ \chi(x-a_j) \left( \ddot a_j \cdot (x-a_j)-\dot a_j\cdot \dot a_j \right),
\\
\nabla \p_t \phi(t,x) &= \sum_{j=1}^n -\dot a_j \cdot \nabla^2\chi(x-a_j) \ \dot a_j\cdot(x-a_j)
- \dot a_j \cdot \nabla \chi(x-a_j) \ \dot a_j \\
&\quad + \nabla \chi(x-a_j) \left( \ddot a_j\cdot (x-a_j) - |\dot a_j|^2 
\right) +\chi(x-a_j) \ \ddot a_j .
\end{align*}
We first note that $\dot a_j = - {1\over \pi} \nabla_{a_j} W$ which implies $\ddot a_j = {1\over \pi^2} \nabla_{a_k} W \nabla_{a_j, a_k} W$.  Therefore,
\begin{align*}
\sup_j \LN  \dot a_j \RN_{L^\infty_T} & \lesssim {n \over \rho_\star} , \\
\sup_j \LN  \ddot a_j \RN_{L^\infty_T} & \lesssim {n^3 \over \rho^3_\star} ,
\end{align*}
and these estimates imply the following bounds 
\begin{align*}
\norm{ \phi}_{L^\infty_T L^\infty_\Omega}&\lesssim {n  \sigma} \sup_j \LN \dot a_j \RN_{L^\infty_T   } \lesssim {1\over n^2} , \\
\norm{\nabla \phi}_{L^\infty_T L^\infty_\Omega}&\lesssim n \sup_j \LN \dot a_j \RN_{L^\infty_T   } \lesssim {n^2 \over \rho_\star}  , \\
\norm{\nabla\p_t \phi}_{L^\infty_T L^\infty_\Omega}&\lesssim {n \over \sigma} \sup_j \LN \dot{a}_j \RN^2_{L^\infty_T   } + n \sup_j \LN \ddot{a}_j \RN_{L^\infty_T   } \lesssim {n^7 \over \rho_\star^3}.
\end{align*}
Now we analyze the terms in \eqref{eq:withphi} one by one.
We have by \eqref{goodpositionskineticenergyproof} 
\begin{align*}
& \LV \int_{t_1}^{t_2}  \int_\Omega \p_t \LC \phi{ e_\e(u) \over \logep} \RC   
 - \LB \int_\Omega \phi(t_2,\cdot) \LC \pi \sum \delta_{\xi_j(t_2)} \RC- \phi(t_1,\cdot) \LC \pi \sum \delta_{\xi_j(t_1)} \RC  \RB  \RV  \\
&\leq 2 \sup_{t\in[t_1,t_2]} \LV  \int_\Omega \phi(t,\cdot) \LC {e_\e(u(t,\cdot)) \over \logep} - \pi \sum \delta_{\xi_j(t)} \RC   \RV \\
& \lesssim \LN \nabla \phi \RN _{L^\infty_T L^\infty_\Omega} \sup_{t \in [t_1,t_2]} \LN {e_\e(u(t)) \over \logep} - \pi \sum_{j=1}^n \delta_{\xi_j(t)}  \RN _\Wdot 
 \lesssim  {n^4 \over \rho_\star^3 \logep} .
\end{align*}
On the other hand 
\begin{align*}
& \LV  \sum_{j=1}^n \dot a_j(t_2) \cdot \LC \xi_j(t_2) - a_j(t_2) \RC - \dot a_j(t_1) \cdot \LC \xi_j(t_1) - a_j(t_1) \RC \RV \\
& \lesssim  {n \over \rho_\star} \sup_{t\in [t_1,t_2]} \sum_{j=1}^n \LV \xi_j(t) - a_j(t) \RV .
\end{align*}
Therefore,
\begin{equation}  \label{keest1}
\LV \int_{t_1}^{t_2} \int_\Omega \p_t \LC \phi{ e_\e(u) \over \logep} \RC   \RV \lesssim   
{n \over \rho_\star} \sup_{t\in [t_1,t_2]}  \sum_{j=1}^n \LV \xi_j(t) - a_j(t) \RV + {n^4 \over \rho^3_\star \logep}  .
\end{equation}

For the second term on the left-hand side of \eqref{eq:withphi},
\begin{align*}
& \LV \int_{t_1}^{t_2} \int_\Omega \p_t \phi { e_\e(u) \over \logep}   - \int_{t_1}^{t_2} \int_\Omega \p_t \phi  \sum_{j=1}^n \pi  \delta_{\xi(t) }   \RV \\
& \leq T \LN \nabla \p_t \phi \RN_{L^\infty_T L^\infty_\Omega} \sup_{t\in[t_1,t_2]} \LN {e_\e(u(t,\cdot)) \over \logep} - \sum_{j=1}^n \pi \delta_{\xi(t)} \RN_\Wdot  \lesssim { n^{10} T \over \rho_\star^5 \logep}
\end{align*}
and
\begin{align*}
& \LV \int_{t_1}^{t_2} \int_\Omega \p_t \phi  \sum_{j=1}^n \pi  \delta_{\xi_j}   + \pi \sum_{j=1}^n \int_{t_1}^{t_2} |\dot a_j |^2  \RV   = \pi  \LV  \sum_{j=1}^n \int_{t_1}^{t_2} \ddot a_j \cdot (\xi_j -a_j)  \RV \\
& \lesssim { n^3 T \over \rho_\star^3}
\sup_{t\in[t_1,t_2]}\sum_{j=1}^n \LV \xi_j(t) - a_j(t) \RV .
\end{align*}
Thus
\begin{equation} \label{keest2}
\begin{split}
& \LV \int_{t_1}^{t_2} \int_\Omega \p_t \phi { e_\e(u) \over \logep}    + \pi \sum_{j=1}^n \int_{t_1}^{t_2} |\dot a_j|^2   \RV \\
& \lesssim { n^3 T \over \rho_\star^3}
\sup_{t\in[t_1,t_2]}\sum_{j=1}^n \LV \xi_j(t) - a_j(t) \RV + {n^{10} T \over \rho_\star^5 \logep}   .
\end{split}
\end{equation}
Note that the previous equality {contains the second term of the left-hand side of \eqref{eq:p1r}. }

For the first term on the right-hand side of \eqref{eq:withphi} we use \eqref{kineticenergyboundunboundedvortices} and get 
\begin{equation}  \label{keest3}
\frac{1}{\logep} \int_{t_1}^{t_2} \int_\Omega \phi {|\p_t u|^2\over \logep}  
\lesssim \LN \phi \RN_{L_T^\infty}  {n^2 \over \logep \rho^2_\star}
\lesssim {1\over \rho_\star^2 \logep} .
\end{equation}

Finally, for the second term on the right-hand side of \eqref{eq:withphi} we have
\begin{equation}\label{keest4} \begin{split}
&\ilogep \int_{t_1}^{t_2} \int_\Omega \nabla \phi \cdot (\p_t u, \nabla u) \\
& \qquad  =
\sum_{j=1}^n \ilogep \int_{t_1}^{t_2} \int_{B_{2\sigma(a_j(t))}}
\nabla \chi(x-a_j)\cdot (\p_t u,\nabla u) \dot a_j\cdot (x-a_j)  
\\
&\qquad \quad + \sum_{j=1}^n \ilogep \int_{t_1}^{t_2} \int_\Omega \chi(x-a_j) \dot a_j\cdot (\p_t u, \nabla u) .
\end{split} \end{equation}
We note that the second term on the right-hand side of \eqref{keest4} is precisely the first term  on the left-hand side of
\eqref{eq:p1r}. We estimate the other term. Using  the Cauchy-Schwarz inequality
\begin{align*}
& \sum_{j=1}^n \ilogep \LV \int_{t_1}^{t_2} {\int_{B_{2\sigma(a_j(t))}} }
\nabla \chi(x-a_j)\cdot (\p_t u,\nabla u) \ \dot a_j\cdot (x-a_j)  \RV \\
& \leq  {\sigma\over \logep^{1\over2}}\sup_j \LN  \dot a_j \RN_{L^\infty_T  } \LN \nabla \chi(x - a_j) \RN_{L^\infty_T L^\infty_\Omega} \\
& \qquad\sum_{j=1}^n  \left(\int_{t_1}^{t_2} \int_ {B_{2\sigma}(a_j(t))\setminus B_\sigma(a_j(t))} {|\p_t u|^2 \over \logep}
   \right) ^{1\over 2}
 \left(\int_{t_1}^{t_2} \int_{B_{2\sigma}(a_j(t))\setminus B_\sigma(a_j(t))} 
{|\nabla u|^2}  
 \right) ^{1\over2} \\
 & \lesssim {n \over \rho_\star \logep^{1\over2}}  \LB \int_{t_1}^{t_2} \int_\Omega{|\p_t u|^2 \over \logep} 
 + \int_{t_1}^{t_2} \int_{\Omega_{\sigma}(a_j(t))} {|\nabla u |^2} \RB  \\
 & \lesssim {n \over  \rho_\star \logep^{1\over2}} \LB {n^2 \over \rho^2_\star} + {n^2 \over \rho^2_\star} |t_2 - t_1|  \RB  \lesssim {n^3 T  \over \rho^3_\star  \logep^{1\over2}} .
\end{align*}

3.  We now study the momentum term on the left hand side of \eqref{eq:p1r}.  From Cauchy-Schwarz
\begin{multline*}
\ilogep\int_{t_1}^{t_2} \int_\Omega \sum_{j=1}^n \chi(x-a_j) \  \dot a_j \cdot (\p_t u,\nabla u) \\
\leq \left(\ilogep\int_{t_1}^{t_2} \int_\Omega \sum_{j=1}^n \chi(x-a_j) |\p_t u|^2    \right)^{1\over2}
\left(\ilogep\int_{t_1}^{t_2} \int_\Omega \sum_{j=1}^n \chi(x-a_j) (\dot a_j \otimes \dot a_j) : (\nabla u \otimes \nabla u)    \right)^{1\over2},
\end{multline*}
where $(b\otimes b)_{ij} = b_i b_j$ for $b\in \R^2$ and $\LC \nabla u \otimes \nabla u \RC_{ij} = \LC \p_i u , \p_j u \RC$ for $u \in \C$.
For any $\dot a_j \in \R^2$ and $\chi$ as above, we claim that
\begin{equation}
\label{eq:equipa}
\begin{split}
& \LV \ilogep \int_{t_1}^{t_2} \int_\Omega \sum_{j=1}^n \chi(x-a_j) (\dot a_j\otimes \dot a_j) : (\nabla u\otimes \nabla u)   
- \int_{t_1}^{t_2} \sum_{j=1}^n \pi |\dot a_j|^2  \RV \\
&  \lesssim {  n^3 T \over \rho_\star^2 \logep^{1\over2}} .
\end{split}
\end{equation}
Indeed for any time $t_1 \leq t \leq t_2$ we find:
\begin{align*}
& \LV {1\over \logep} \int_\Omega \sum_{j=1}^n \chi (x - a_j) \dot a_j \otimes \dot a_j : \nabla u \otimes \nabla u 
-\sum_{j=1}^n \pi \LV \dot a_j \RV^2\RV \\
& \leq \LV  \int_\Omega \sum_{j=1}^n \chi (x - a_j) \dot a_j \otimes \dot a_j : { \nabla u \otimes \nabla u \over \logep}   \right. \\
& \qquad \left. - \sum_{j=1}^n \int_{B_{\sigma}(\xi_j(t) )}  |\dot a_j^x|^2 { |\p_x u|^2 \over \log {\sigma\over \e} } +  |\dot a_j^y|^2 { |\p_y u|^2 \over \log {\sigma\over \e}}  \RV \\
& \quad + \sum_{j=1}^n \LV \LC \int_{B_{\sigma}(\xi_j(t) )}  |\dot a_j^x|^2 { |\p_x u|^2 \over \log {\sigma\over \e}} +  |\dot a_j^y|^2 { |\p_y u|^2 \over \log {\sigma\over \e}}  \RC 
-  \pi \LV \dot a_j \RV^2 \RV  \\
& = I_1 + I_2   .
\end{align*}
First we analyze $I_1$.  {From \eqref{excessballestimateviaglobal} and $D(a(t)) \leq 1$ then Proposition~\ref{stressenergy} is applicable with $K_0 \lesssim 1$ since $|\int_{B_\sigma(\alpha_j)} e_\e(u) - \pi \log{\sigma \over \e} |\leq D_{B_\sigma(\alpha_j)} + \gamma$.  Choosing   $\sigma = \rho_\star \gg \e$, then \eqref{stresstensorest} implies}  
\begin{align*}
I_1 & \leq {2\over \logep} \sum_{j=1}^n \int_{B_{2\sigma}(a_j)} \chi(x - a_j) \LV \dot a_j^x \dot a_j^y \LC \p_x u , \p_y u \RC \RV  \\
& \quad +  \sum_{j=1}^n \int _{B_{2\sigma}(a_j) \backslash B_\sigma(\xi_j)} \chi(x-a_j) \LB { |\dot a_j^x|^2 |\p_x u|^2 \over \logep}
+ { |\dot a_j^y|^2 |\p_y u|^2 \over \logep} \RB      \\
& \quad + \LV 1 -  { \log {1\over \e} \over \log {\sigma \over \e} } \RV  \sum_{j=1}^n \int _{ B_\sigma(\xi_j)} \LB { |\dot a_j^x|^2 |\p_x u|^2 \over \logep} + { |\dot a_j^y|^2 |\p_y u|^2 \over \logep} \RB      \\
& \lesssim {n \over \logep} \sup_j \LN \dot a_j \RN^2_{L^\infty_T  }  \sqrt{ \log {\sigma \over \e}} + {\sup_j \LN \dot a_j \RN^2_{L^\infty_T  } \over \logep} \int_{t_1}^{t_2} \int_{\Omega_\sigma(a(t))} {\LV \nabla u \RV^2 }  \\
& \quad + {\log {1\over \sigma} \over \logep} \sup_j \LN \dot a_j \RN^2_{L^\infty_T  }   \sum_{j=1}^n \int _{ B_\sigma(\xi_j)} {\LV \nabla u \RV^2 \over \log {\sigma \over \e}}  \\
& \lesssim    \LB { n^3  \over  \rho_\star^2 \logep^{1\over 2}} +  {n^4 \over \rho^4_\star \logep} + {n^3  \log{1\over \sigma} \over \rho_\star^2 \logep} \RB  .
\end{align*}
Next we look at $I_2$.  Again {from \eqref{excessballestimateviaglobal} and \eqref{stresstensorest}} and since $\sigma \gg \e$, 
\begin{align*}
I_2 & \lesssim \sum_{j=1}^n |\dot a_j|^2 \LC \log { \sigma \over  \e} \RC^{-{1\over2}}  \lesssim {n^3 \over \rho_\star^2  \logep^{1\over 2}}     .
\end{align*}
Comparing the terms from $I_1$ and $I_2$ results in estimate \eqref{eq:equipa}.
Finally, we combine \eqref{eq:withphi} with \eqref{keest1}-\eqref{eq:equipa} { which yields} \eqref{eq:p1r}.

4.  
Using \eqref{eq:p1r} and assumptions \eqref{kineticenergyassumptions} we have
\begin{align*}
\left|\pi \int_{t_1}^{t_2} \sum_{j=1}^n \LV \dot a_j \RV^2  - F_1\right|
\le \left( \int_{t_1}^{t_2}  \int_\Omega 
\sum_{j=1}^n\chi(x-a_j) { |\p_t u|^2 \over \logep}    \right)^{1\over 2}
\left(\pi \int_{t_1}^{t_2} \sum_{j=1}^n|\dot a_j|^2  + F_2
\right)^{1\over 2}
\end{align*}
where 
\begin{align*}
F_1 & =  C {n^3 T  \over \rho_\star^3 \logep^{1\over2}} + C {n^3 T \over \rho_\star^3}\sup_{s \in [t_1,t_2]} \sum_j^n\LV \xi_j(s) - a_j(s) \RV  \\
F_2 & = C {n^3 T \over \rho_\star^2 \logep^{1\over 2}}  .
\end{align*}
We square the previous inequality, obtaining by division
\[
\frac{\left(\pi \int_{t_1}^{t_2} \sum_{j=1}^n|\dot a_j|^2  - F_1\right)^2}
{\pi \int_{t_1}^{t_2} \sum_{j=1}^n|\dot a_j|^2 +F_2}
\le \ilogep\int_{t_1}^{t_2} \int_\Omega \sum_{j=1}^n
\chi(x-a_j) |\p_t u|^2   .
\]
Setting $K=\pi \int_{t_1}^{t_2}\sum_{j=1}^n |\dot a_j|^2 $, 
we have using
\[
\frac{(K-F_1)^2}{K+F_2} =K+F_2 -2(F_1+F_2)+\frac{(F_1+F_2)^2}{K+F_2}
\ge K-F_1-2F_2
\]
that  
\begin{equation}  \label{strongerkineticenergylowerbound}
\ilogep\int_{t_1}^{t_2} \int_\Omega \sum_{j=1}^n\chi(x-a_j) |\p_t u|^2  
\ge \pi \int_{t_1}^{t_2}\sum_{j=1}^n |\dot a_j|^2  - CF_1-CF_2,
\end{equation}
and so \eqref{eq:mainkinbd} follows, since $F_1 \gtrsim F_2$.

5.  We next will show that $u(t)$ is well-approximated in certain ways 
by the canonical harmonic map {$u_\star(t) := u_\star(\cdot; \xi(t))$}
for $t \leq t_2$. To do this, we need to estimate the surplus energy
$D( \xi(t))$ with respect to the points $\xi(t)$
found in Step 1 above. 
Assuming $D(a(0)) \leq {n^3 T \over\rho_\star^3 \logep^{1\over2}}$ then
{by \eqref{excessenergyidentity} and \eqref{eq:mainkinbd} we have}
\begin{align*}
D (\xi(t))
& = 
D(a(t)) + W(a(t),d) - W(\xi(t),d)
\nonumber \\
& \lesssim   \A_\e \LB { \sup_{s\in[0,t]} \sum_{j=1}^n \LV \xi_j(s) - a_j(s) \RV }  + { \logep^{-{1\over2}}} \RB   \\
& \quad +
\left( {  \sup_{s \in [0,t]} \sum_{j=1}^n |\xi_j(s) - a_j(s)| } \right)\  
(\sup_j \sup_{ |y-a(t)| \le |\xi(t)-a(t)| }  |\nabla_{y_j} W(y)| ) 
\label{Sigma.est0}\end{align*}
where $\A_\e = {n^3 T \over \rho_\star^3}$.
If  $y\in \Omega^n$ is such that
$|y - a(t)|\le |\xi(t)- a(t)|$, so
$\rho_y \ge \frac 12 {\rho_{a(t)}}$ and 
\begin{equation} \label{excessenergyDxibound}
D(\xi(t))
 \le  C\A_\e \LB  \sup_{s\in[0,t]} \sum_{j=1}^n |\xi_j(s) - a_j(s)|  +  \logep^{-{1\over2}} \RB.
\end{equation}
which implies \eqref{excessenergyestimateviaetacontrol}.
Furthermore, we have
\begin{align*}
& \int_{\Omega_{\rho_\star} (\xi(t))} e_\e(|u(t)|)  + 
\frac 14\LV { j (u(t)) \over |u(t)|} - j( u_\star(t) )\RV^2 \\
& \le 
 C \A_\e \LB  \sup_{s\in[0,t]} \sum_{j=1}^n |\xi_j(s) - a_j(s)| +  \logep^{-{1\over2}} \RB +
C 
\left(\frac{n^5}{\rho_\star}( \e {n^5 \over \rho_\star} + \e E_\e(u))  \right)^{1\over 2}\\
& \le 
C \A_\e \LB  \sup_{s\in[0,t]} \sum_{j=1}^n |\xi_j(s) - a_j(s)| +  \logep^{-{1\over2}} \RB
\end{align*}
for all $t\in [0,t_2]$, where we used {Proposition~\ref{gammastabilityprop} and \eqref{excessenergyDxibound} in the first inequality}. {Estimate \eqref{gstab.ref2} follows from \eqref{gstab.ref1} 
by directly following the argument in Step 3 of the proof of Theorem~2 in \cite{JSp2}.   }

\end{proof}

\section{Proof of Theorem~\ref{thm:quantdynamics}}
\label{sec:proofmainth}
To prove Theorem~\ref{thm:quantdynamics},
we will use the energy identity
\eqref{eq:consenwithdivT} to connect PDE and ODE dynamics.
To control the errors, we apply
 the Gronwall inequality 
and continuity arguments that show the theorem is true for 
 longer and longer times. In order to apply Gronwall's inequality,
 we use time averaging to obtain improved estimates.
%
 
\subsection{Assumptions and initial estimates}
\label{subsec:assu}
 We recall the following assumptions:
 \begin{align}
\hbox{ number of vortices } \qquad  & n  \leq \logep^{1\over 200}  \label{nassumption}\\
\hbox{ minimal intervortex distance } \qquad & \rho_\star  \geq \logep^{-{1\over 100}} \label{rhoassumption} \\
\hbox{ total time scale } \qquad & { T \leq  |\log \logep| } \label{Tassumption} \\ 
\hbox{ initial excess energy } \qquad & D(a(0)) \leq  \logep^{-{2\over 5}} . \label{initialexcessassumption}
\end{align}
{Note the time scale, $T$, serves as a coarse bound for the eventual time frame for which we have the vortex motion law and will be used to simplify calculations.}
Additionally, we need the following small quantities:
\begin{align}
\hbox{ time averaging scale } \qquad & \delta_\e  = \logep^{- {1\over 4}} \label{deltaassumption} \\
\hbox{ resolution of vortex location } \qquad & \D_\e  = \logep^{-{1\over 4}}  \label{scriptDassumption} 
\end{align}
and the following composites:
\begin{equation}
\begin{split}
\hbox{ Jacobian localization error } \qquad& s_\e := C \e \left[ \frac{n^5}{\rho_\star} +{E_\e(u_0)}
\right],\\
\hbox{ energy localization error } \qquad& t_\e := {C \over \logep} \LB  n \log{ n^4 \over \rho_\star} +  W(a(0)) \RB .
\end{split}
\label{s.def}
\end{equation}

Since the energy is concentrating at the points $\xi_j(t)$ and the ODE 
gives us vortex positions
 $a_j(t)$, our main objective 
 is to estimate and control $\sum_j|\xi_j(t) - a_j(t)|$.
This is a challenging quantity to work with directly, so following \cite{JSp2}, we define a 
similar quantity that is differentiable and 
has very similar properties.  We set
\begin{equation}
\eta (t) :=
\sum_{j=1}^n |\eta_j(t)|
:= 
\sum_{j=1}^n 
\LV \int_\Omega {e_\e(u) \over \logep} \, \Phi_j \RV 
\label{eta.def}\end{equation}
where
\[
\Phi_j(x,t)  = \varphi(x - a_j(t)),  \quad\quad
\varphi(x)  = x \chi_{\rho_\star}(x)
\]
and $\chi_{\rho_\star}(x) = \chi(\frac{x}{\rho_\star})$  for a fixed  $\chi\in C^\infty_0(\R^2)$
satisfying
$
\chi(x) = \left\{ \begin{array}{ll} 
1 & \hbox{for } |x| \leq 1 \\
0 & \hbox{for } |x|\geq 2 \end{array} \right..
$
The $\Phi_j$'s are supported on $B_{2\rho_\star}(a_j(t))$, so that
$\left\{ \operatorname{supp} \Phi_j(x,t) \right\}$ are pairwise 
disjoint when $\rho_{a(t)} \ge\rho_\star $
and in particular for all  $0\le t \le \tau_1$.
Note that in \cite{JSp2}, the definition is essentially the
same, but uses the Jacobian instead of the energy density.

We recall and 
define a series of time intervals on which our function $u$ is well-behaved in different senses. 
\begin{align} 
\tau_0 &  = \inf \left\{t >0 \hbox { such that }
{\rho_{a(t)}}\le \rho_\star \right\}   \nonumber 
\\
\tau_{max} & = \min\LCB \tau_0, C \sqrt{ \log \logep {\rho_\star^4 \over n^{3}} }\RCB  
\nonumber 
 \\
\tau_1 & = \sup_{t} \LCB 0 \leq t \leq \tau_{max} \hbox { such that }   \| J(u(s)) - \sum_{i=1}^n\pi   \delta_{a_i(s)}\|_{\dot W^{-1,1}} 
\le {    \D_\e } \right.  \label{Te.def} \\
&  \qquad \qquad \hbox{ and } D(a(s)) \leq 1  \hbox { for all } 0 \leq s \leq t \Big\}  \nonumber \\
\tau_2 & = \sup_{t} \LCB 0 \leq t \leq \tau_1 \hbox { such that }   \eta(s) \leq {1\over 2} \D_\e     \hbox { for all } 0 \leq s \leq t \RCB  \nonumber
 .
\end{align}
In Subsection~\ref{subsec:continuity}, we will show that 
$\tau_1=\tau_2=\tau_{max}$.

The definition of  $\tau_1$ implies that 
\begin{equation}\label{eq:rhobound}
\rho_{a(t)}\ge \rho_\star \ge \logep^{-{1\over 100}}
\hbox{ and } 
  \| J(u(t)) - \sum_{i=1}^n\pi d_i \delta_{a_i(t)}\|_{\dot W^{-1,1}}
\le { \D_\e} 
\end{equation}
for all $t\in [0,\tau_1]$.
From \eqref{nassumption} and \eqref{eq:rhobound} we have 
\begin{align*}
\| J(u(t)) - \sum \pi  \delta_{a_i(t)} \|_{\dot W^{-1,1}}
\leq \logep^{-{1\over 4}}
  \leq {1 \over 8 C_\star} \logep^{-{7 \over 200}} 
\leq  {\rho_{a(t)} \over 8 C_\star n^5},
\end{align*}
where $C_\star$ is the constant found in Proposition \ref{gammalocalizationprop}. 
Therefore,  Proposition \ref{gammalocalizationprop} 
and Theorem~\ref{thm:energyisdelta} hold,  so 
 there exist
$\xi(t) = (\xi_1(t),\ldots, \xi_n(t))\in \Omega^{n*}$ such  that 
$|\xi_i - a_i| \le \frac{\rho_{a(t)}}{4}$ for all $i$, and 
\begin{equation}
\begin{split}
& \| J(u) (s)- \sum_{i=1}^n\pi  \delta_{\xi_i(s)}\|_{\dot W^{-1,1}}
\le s_\e \\
& \| {e_\e(u) (s) \over \logep} - \sum_{i=1}^n\pi \delta_{\xi_i(s)}\|_{\dot W^{-1,1}}
\le t_\e.
\end{split}
\label{xis.def}\end{equation}

Given our assumptions and composite quantities, we collect a few useful estimates.
\begin{lemma}
Assuming \eqref{nassumption}-\eqref{initialexcessassumption} then for $0\leq t \leq \tau_1$ 
\begin{align}
W(a(t)) & \lesssim \logep^{3 \over 100},  \label{Wtotalbound} \\
E_\e(u(t)) & \lesssim \logep^{1+ { 1 \over 200}} , \label{Energytotalbound} \\
s_\e &  \lesssim \e^{9\over10},  \label{sepsilonbound} \\
t_\e &  \lesssim \logep^{-{97\over 100}} \label{tepsilonbound},
\end{align}
and for all $0 \leq s \leq t \leq \tau_1$
\begin{equation} \label{kineticenergyboundfiniteinterval}
\int_s^t \int_\Omega { \LV\p_t u \RV^2 \over \logep} \lesssim 1 + \logep^{7 \over 200} |t - s|.
\end{equation}
\end{lemma}
\begin{proof}
By \eqref{W.scale}, since $\rho_{a(t)} \geq \rho_\star$ then \eqref{Wtotalbound} follows from \eqref{nassumption} and \eqref{rhoassumption}. 
Next note that 
\begin{align*}
E(u(t)) & =  W_\e(a(t)) +D(a(t)) = n \LC \pi \logep + \gamma \RC + W(a(t)) + D(a(t)) \\ & \lesssim \logep^{1 + {1\over 200}} 
\end{align*}
from \eqref{nassumption}, \eqref{Wtotalbound}, and the fact that $D(a(t)) \leq 1$ for $0 \leq t \leq \tau_1$.   As a result \eqref{sepsilonbound} and \eqref{tepsilonbound} follow from \eqref{nassumption}, \eqref{rhoassumption}, \eqref{Wtotalbound}, and \eqref{Energytotalbound}.

Finally, \eqref{kineticenergyboundfiniteinterval} follows from \eqref{PDEenergyidentity} and \eqref{ODEenergyidentity}:
\[
\int_s^t \int_\Omega { \LV \p_t u \RV^2 \over \logep} = D(a(s)) - D(a(t)) +  \int_s^t |\dot a|^2
\lesssim 1 + |t -s | {n^3 \over \rho_\star^2}
\]
by the hypotheses on $W(a)$.  
 \end{proof}

We now show that $\eta$ is a good measure for $\sum_j |\xi_j -a_j|$ 
and similar quantities.
\begin{lemma} \label{lemmameasuringmu}
If $ 0 \leq t \leq \tau_1$ then
\begin{align}
\label{eta.rocks1} \LV \eta(t)
-  \sum_{i=1}^n  \pi  |\xi_i(t)-a_i(t)| \RV & \lesssim   t_\e ,  \\
\label{eta.rocks2}\LV \eta(t) -  \|  {e_\e(u(t))\over \logep}  - \sum_{i=1}^n\pi \delta_{a_i(t)}\|_{\dot W^{-1,1}}
 \RV & \lesssim   t_\e , \\
\label{eta.rocks3} \LV \eta(t) -  \|  J(u(t))  - \sum_{i=1}^n\pi  \delta_{a_i(t)}\|_{\dot W^{-1,1}}
 \RV & \lesssim   t_\e,
\end{align} 
and 
\begin{equation}
\eta(t) \le  2\D_\e .
\label{eta.bound}\end{equation}
\end{lemma}
\begin{proof}
First note that in view of the definition of $\tau_1$, and
\begin{equation}
\begin{split}
\pi \sum_j |\xi_j(t) - a_j(t)| 
& =  \| \sum_{i=1}^n\pi ( \delta_{\xi_i(t)} - \delta_{a_i(t)}) \|_{\dot W^{-1,1}} \\
& \lesssim  \logep^{-{1 \over 4}} + t_\e \ \le \  {\rho_\star \over 4}
\end{split}
\label{distances.e0}\end{equation}
when $\e$ is sufficiently small,
for all $t\in [0, \tau_1]$.   From the definition of $\Phi_j$ it follows that
$\xi_j(t) - a_j(t)
=
\Phi_j(\xi_j(t), t) $
for all such $t$.   Therefore, there exists a unit
vector $v_j(t)$ such that 
$|\xi_j(t) - a_j(t)|
=
v_j\cdot \LC 2   \Phi_j(\xi_j(t)) \RC$; hence, 
\begin{align*}
&\pi \sum_j |\xi_j(t) - a_j(t)|   \nonumber \\
&=
\int \left(\pi\sum  \delta_{\xi_i(t)}\right)\left(\sum v_j\cdot \Phi_j(t)\right )\,   \nonumber \\
&\le \eta(t) +  
\int \left(  \pi\sum \delta_{\xi_i(t)}- {e_\e(u(t))\over \logep} \right)\left(\sum v_j\cdot \Phi_j(t)\right )\,   \nonumber \\
&\le \eta(t)   + \LN {e_\e(u(t))\over \logep} -\pi \sum \delta_{\xi_i(t)} \RN_{\dot W^{-1,1}}
\ \|\sum_j v_j\cdot\Phi_j(t)\|_{W^{1,\infty}}  \nonumber \\
&\le \eta(t) + 
C t_\e
\mbox{ for all }t\in [0, \tau_1]. 
\end{align*}
A similar argument shows that for such $t$,
\begin{align*}
\eta(t) \le \pi \sum|\xi_i(t)-a_i(t)| + C t_\e.
\end{align*}
which proves \eqref{eta.rocks1}.

{Again following the argument in \cite{JSp2} we use the triangle inequality and the $\dot W^{-1,1}$ norm to get }
\begin{align*}
& \| {e_\e(u(t)) \over \logep} - \sum_{i=1}^n  \pi  \delta_{a_i(t)}\|_{\dot W^{-1,1}}
\nonumber\\
&\hspace{1em}
\le \ 
\| {e_\e(u(t)) \over \logep} - \sum_{i=1}^n \pi  \delta_{\xi_i(t)}\|_{\dot W^{-1,1}} 
\ + \ \| \sum_{i=1}^n  \pi ( \delta_{\xi_i(t)} - \delta_{a_i(t)}) \|_{\dot W^{-1,1}}
\nonumber\\
&\hspace{1em}
\le \ 
t_\e +  \pi \sum|\xi_i(t) - a_i(t)| \le \ 
C t_\e + 2 \eta(t) \nonumber
\end{align*}
 for all $t\in [0, \tau_1]$.  In the same way one finds that
\begin{align*}
\eta(t) \le  C t_\e + 
 \| {e_\e(u(t))\over \logep} - \sum_{i=1}^n\pi  \delta_{a_i(t)}\|_{\dot W^{-1,1}}
\end{align*}
for all $t\in [0, \tau_1]$, which proves \eqref{eta.rocks2}.  A similar argument establishes \eqref{eta.rocks3}.
Finally,  by the triangle inequality, \eqref{Te.def}, and \eqref{eta.rocks3}
we arrive at  \eqref{eta.bound} since $t_\e \ll \mathcal{D}_\e$. 
\end{proof}

\subsection{Growth of the position error}
\label{subsec:doteta}
In the following, we  show that
 $|\dot \eta| \lesssim A_\e \sqrt{\eta} + {B}_\e$ for $B_\e \ll 1$, which is not in itself
 sufficient to prove $\eta\ll1$ for long times.  
 
\begin{proposition}  
For $t \in [0, \tau_1]$ 
\begin{align}
\label{dotetainitialestimates0}
  |\dot \eta(t)| & \lesssim {\A_\e \over \rho_\star} \LC   \sup_{s \in [0,t]} \eta(s) + { \logep^{-{1\over2}}} \RC  
+ 2 \int \p^2_{x_k x_\ell} \Phi_j  \LC {j(u)\over |u|} - j(u_\star)  \RC_k  \LC  j(u_\star) \RC_\ell    \\
\nonumber
& \quad  -  \int \p^2_{x_k x_k} \Phi_j \LC   { j(u) \over |u| } - j(u_\star) \RC_\ell \LC j(u_\star) \RC_\ell
+ \int {\LV \p_t u (t)\RV^2 \over \logep^2} \\
\label{dotetainitialestimates}
& \lesssim {\A_\e \over \rho_\star} \LC   \sup_{s \in [0,t]} \eta(s) + \logep^{-{1\over2}} \RC  + 
{n^{3 \over 2} \A_\e^{1\over2} \over \rho_\star} \sqrt{ \sup_{s \in [0,t]} \eta(s) + \logep^{-{1\over2}}} \\
& \nonumber \qquad 
+ \int {\LV \p_t u (t)\RV^2 \over \logep^2}
\end{align}
where
\begin{align*}
\mathcal{A}_\e & := {n^3 T \over \rho_\star^3} \lesssim \logep^{{9 \over 200}} |\log\logep|   
\end{align*}
is the constant defined in Theorem~\ref{thm:kineticenergybound}.
\label{dot.eta.est}\end{proposition}

To prove Proposition~\ref{dot.eta.est} we first compute the time derivative of $\eta(t)$.  
\begin{lemma}  \label{detatdecomposition}
Let $u$ be a solution to \eqref{tdgl}.  Then for $0 \leq t \leq \tau_1$ and $j = 1, \ldots, n$
\begin{equation}
\dot \eta_j = T_{j,1} + T_{j,2} + T_{j,3} +T_{j,4} +T_{j,5} +T_{j,6} +T_{j,7} 
\end{equation}
where
\begin{align*}
T_{j,1} & = \nabla \varphi(\xi_j  - a_j ) \cdot  \LC \nabla_j W(\xi) - \nabla_j W(a) \RC \\
T_{j,2} & = -\int \LC { e_\e (u) \over \logep} - \sum_{i=1}^n \pi \delta_{\xi_i} \RC \LC -  \nabla_j W(a) \RC \cdot 
\nabla \varphi (x - a_j)  \\
T_{j,3} & = \int \p^2_{x_k x_\ell} \Phi_j \LC \p_{x_\ell} |u| , \p_{x_k} |u| \RC - \p^2_{x_\ell x_\ell} \Phi_j \ e_\e(|u|)  \\
T_{j,4} & = \int \p^2_{x_k x_\ell} \Phi_j \LB  \LC {j(u)\over |u|} - j(u_\star)  \RC_\ell  \LC {j(u)\over |u|} - j(u_\star) \RC_k  - {\delta_{k \ell} \over 2} \LV { j(u) \over |u| } - j(u_\star) \RV^2 \RB  \\
T_{j,5} & = 2 \int \p^2_{x_k x_\ell} \Phi_j  \LC {j(u)\over |u|} - j(u_\star)  \RC_k  \LC  j(u_\star) \RC_\ell   \\
T_{j,6} & = -  \int \p^2_{x_k x_k} \Phi_j \LC   { j(u) \over |u| } - j(u_\star) \RC_\ell \LC j(u_\star) \RC_\ell  \\
T_{j,7} & = -  \int  \Phi_j  { \LV \p_t u \RV^2 \over \logep^2}  
\end{align*}
and $j(u_\star) = j(u_\star(\xi,d))$.
\end{lemma}

\begin{proof}
Differentiating $\eta_j$, we obtain
\[
{d \over dt} \eta_j = \int { e_\e(u) \over \logep} { d \over d t} \Phi_j  + \int \Phi_j {d \over dt} {e_\e(u) \over \logep} 
\]
Since ${d \over dt } \Phi_j(x,t) = {d \over dt} \varphi(x - a_j) = ( - \dot{a}_j ) \cdot \nabla \varphi (x - a_j)$, we can use
the ODE and the fact that $\Phi_j(\xi_i(t)) = 0 $ for $i \neq j$ to write
\begin{align*}
\int {e_\e(u) \over \logep} {d \over dt} \Phi_j  
& = \int {e_\e (u) \over \logep} ( - \dot{a}_j) \cdot \nabla \varphi(x - a_j)  \\
& =   {1\over \pi}\nabla_j W(a) \cdot \nabla \varphi (\xi_j - a_j) \\ 
&\quad + {1\over \pi} \int \LC { e_\e (u) \over \logep} - \sum_{i=1}^n \pi \delta_{\xi_i} \RC \LC -  \nabla_j W(a) \RC \cdot 
\nabla \varphi (x - a_j)  
\end{align*}

Next from the evolution identity for the  energy 
\eqref{eq:consenwithdivT},
 the representation $\nabla u = \LC \nabla |u| + i { j(u) \over |u| } \RC {u \over |u|}$,  and $A^2- B^2 = |A - B|^2 + 2 (A - B) \cdot B$ we find
\begin{align*}
\int \Phi_j {d \over dt} {e_\e(u) \over \logep} &
= \int  \p^2_{x_k x_\ell} \Phi_j \LB \LC \p_{x_\ell} u , \p_{x_k} u \RC - {\delta_{k \ell} \over 2} \LV \nabla u \RV^2 \RB
- \p^2_{x_\ell x_\ell} \Phi_j { \LC 1 - |u|^2 \RC^2 \over 4 \e^2 }  \\
& \quad -  \int \Phi_j { \LV \p_t u \RV^2 \over \logep^2}  \\
& =  \int \p^2_{x_k x_\ell} \Phi_j \LC \p_{x_\ell} |u| , \p_{x_k} |u| \RC - \p^2_{x_\ell x_\ell} \Phi_j e_\e(|u|) 
-  \int \Phi_j { \LV \p_t u \RV^2 \over \logep^2}\\
& \quad 
+ \int \p^2_{x_k x_\ell} \Phi_j \LB  \LC {j(u)\over |u|} - j(u_\star)  \RC_\ell  \LC {j(u)\over |u|} - j(u_\star) \RC_k  - {\delta_{k \ell} \over 2} \LV { j(u) \over |u| } - j(u_\star) \RV^2 \RB \\
& \quad 
+  \int \p^2_{x_k x_\ell} \Phi_j  \LC {j(u)\over |u|} - j(u_\star)  \RC_\ell  \LC  j(u_\star) \RC_k  \\
& \quad 
+  \int \p^2_{x_k x_\ell} \Phi_j  \LC {j(u)\over |u|} - j(u_\star)  \RC_k  \LC  j(u_\star) \RC_\ell   \\
& \quad 
-  \int \p^2_{x_\ell x_\ell} \Phi_j \LC   { j(u) \over |u| } - j(u_\star) \RC_\ell \LC j(u_\star) \RC_\ell  \\
& - \quad  {\sum_{k = 1}^n \p_{ x_k } \Phi_j (\xi_j)  \LC \nabla_{\xi_j} W(\xi)  \RC_k },
\end{align*}
where we have used Lemma~\ref{L.gradW}
to write $\nabla W$ by means of $j(u_\star)$.

\end{proof}

We estimate $\dot \eta$ by separately considering the
contributions from the different terms isolated in Lemma~\ref{detatdecomposition}, leading to the 
\begin{proof}[Proof of Proposition~\ref{dot.eta.est}]
%
%
%
%
Note from Lemma  \ref{detatdecomposition} and the definition \eqref{eta.def} of $\eta$ that 
\begin{equation*}
\dot \eta =  T_1+ \cdots + T_7, \quad
\mbox{ where }
T_k = \sum_{j=1}^n \frac{\eta_j}{|\eta_j|} \cdot T_{j,k}.
\label{doteta.split2}\end{equation*}
We estimate these terms in turn. 

First, note that {$\p_k \varphi_\ell (\xi_j - a_j) = \delta_{k \ell}$} for $0\le t\le \tau_1$,
by the definition of $\phi$ and \eqref{distances.e0}.
Thus, in view of \eqref{eta.rocks1},
\[
|T_1| \le  \sum_j |T_{j,1}| 
\le
C(\eta +t_\e) \sum_j |\nabla_jW(\xi) - \nabla_jW (a)|.
\]
And arguing as
in the proof of \eqref{chmdifferencebound1} we see that 
\begin{align*}
|\nabla_jW(\xi) - \nabla_jW (a)|
&\le
\sum_{k=1}^n |\xi_k(t) - a_k(t)|
(\sup_k \sup_{ |y-a(t)| \le |\xi(t)-a(t)| }  |\nabla_k \nabla_{j  } W(y)|) \\
&
\le
(\eta(t) + C t_\e)
C \frac {n}{\rho_\star^2},
\end{align*}
using \eqref{eta.rocks1} again, as well as bounds on $\nabla^2W$
from \eqref{Wderivatives}.
Thus
\begin{equation}
|T_1| 
\le
 C \frac {n^2}{\rho_\star^2} { (\eta(t) + C t_\e). }
\label{T1.est}
\end{equation}
Next, 
\begin{align}
\left| T_2 \right| &
=
\left| \int \LC {e_\e (u)\over \logep} - \sum_{i=1}^n \pi  \delta_{\xi_i} \RC 
\LC
\sum_j  \nabla_j W(a) \cdot \nabla ( \Phi_j \cdot  \frac{\eta_j}{|\eta_j|})
\RC  \right|
\nonumber \\
& \leq 
\LN {e_\e (u)\over \logep} - \sum_{i} \pi  \delta_{\xi_i} \RN_{\dot{W}^{-1,1}} 
\LN \nabla \sum_j  \nabla_j W(a) \cdot \nabla
(\Phi_j\cdot \frac{\eta_j}{|\eta_j|} )
\RN_{L^\infty} \nonumber.
\end{align}
Since the $\Phi_j$'s have disjoint support
\[
\LN\nabla \sum_j  \nabla_j W(a) \cdot \nabla
(\Phi_j\cdot \frac{\eta_j}{|\eta_j|} )
\RN_{L^\infty}
\le \sup_j | \nabla_j W(a)| \ \| \nabla^2 \Phi_j\|_\infty \le
C \frac n{\rho_\star^2}.
\]
We conclude from \eqref{xis.def} and the above that
\begin{equation}
|T_2| \leq Ct_\e {n \over \rho_\star^2}  .
\label{T2.est}
\end{equation}
Continuing, 
we use the fact that $\nabla^2 \Phi_j$ vanishes in $B_{\rho_\star}(a_j)$,
together with Theorem~\ref{thm:kineticenergybound},
to find that 
\begin{equation}
\begin{split}
\LV  T_3 \RV & \leq \LN \sum_{j}\frac{\eta_j}{|\eta_j|} \cdot \nabla^2 \Phi_j \RN_{L^\infty} 
\int_{\Omega_{\rho_\star}(a)} \LV \nabla |u| \RV^2  \\
&  \lesssim
{ \A_\e \over \rho_\star} 
\LB   \sup_{s \in [0,t]} \eta(s) + \logep^{-{1\over2}}  \RB  .
\label{T3.est}
\end{split} 
\end{equation}
Exactly the same considerations show that
\begin{equation}
\LV   T_4 \RV 
\lesssim
{ \A_\e \over \rho_\star} 
\LB   \sup_{s \in [0,t]} \eta(s) + \logep^{-{1\over2}}  \RB.
\label{T4.est}\end{equation}
Next,
\[
\LV  T_5 \RV \le 
\LN \sum_j \frac{\eta_j}{|\eta_j|}\cdot\nabla^2 \Phi_j\RN_{L^\infty} \ 
\LN \frac {j(u)}{|u|} - j(u_\star) \RN_{L^2(\Omega_{\rho_\star})}
\|  j(u_\star)\|_{L^2(\cup_j \mbox{\scriptsize{supp}} \nabla^2\Phi_j)}.
\]
Using \eqref{justar.Linf},
one can easily check that 
$\|  j(u_\star)\|_{L^2(\cup_j \mbox{\scriptsize{supp}} \nabla^2\Phi_j)}
\le \frac {Cn}{\rho_\star}( C n \rho_\star^2)^{1\over 2}$,
and hence we conclude that 
\begin{equation}
\begin{split}
\LV  T_5 \RV &  \lesssim 
\frac {n^{3\over 2} \A_\e^{1\over2} }{\rho_\star} 
 \sqrt{\sup_{s \in [0,t]} \eta(s)  + \logep^{-{1\over2}}}.
\label{T5.est1}
\end{split}
\end{equation}
Exactly the same argument shows that 
$\LV  T_6 \RV  \lesssim
\frac {n^{3\over 2} \A_\e^{1\over2} }{\rho_\star} 
 \sqrt{\sup_{s \in [0,t]} \eta(s)  + \logep^{-{1\over2}}}$.
Finally, since $|\Phi_j(x)| = |x - a_j| | \chi ({x - a_j \over \rho_\star})| \leq 2 \rho_\star$, then
\begin{equation} \label{T7.est}
|T_7| \lesssim \int_\Omega {| \p_t u |^2 \over \logep^2} .
\end{equation}
Combining \eqref{T1.est}-\eqref{T7.est} yields \eqref{dotetainitialestimates0} and \eqref{dotetainitialestimates}.



\end{proof}

The result of Proposition \ref{dot.eta.est}
 is not good enough to get any very strong result from Gronwall's inequality, but it still implies useful bounds that
 allow us to compare $\eta$ to its time averages.
 
 We define the  \emph{time average} of a function $h$ as 
\[
\LA h \RA_{\delta_\e} (t) = {1 \over {\delta_\e}} \int_{t-{\delta_\e}}^t h(s) 
\]
for any $t \geq \delta_\e$.

\begin{corollary}  \label{lipschitzboundoneta}
We have for all $0 \leq s \leq t \leq \tau_2$
\begin{equation}
\LV \eta(t) - \eta(s) \RV  \lesssim |t - s| \logep^{-{17 \over 200}}  |\log \logep |^{1\over 2} +
\logep^{-{97 \over 100}} |\log \logep|.
\label{doteta.bound1}\end{equation}
Furthermore, if $0 \le t-\delta_\e \le s \le t \le \tau_2$ then 
\begin{equation}
|\eta(s) - \LA \eta(t)\RA_{\delta_\e}| 
\lesssim \logep^{-{67\over 200}} |\log \logep |^{1\over 2} .
\label{avg.est1}\end{equation}
\end{corollary}
\begin{proof}
%
From Proposition~\ref{dot.eta.est} we have that 
\begin{align*}
\LV \dot{\eta}(t) \RV & \lesssim 
{ {n^{3\over 2} \A_\e^{1\over2}  \over \rho_\star} \sqrt{  \D_\e + \logep^{-{1\over2}}} }  + \int {|\p_t u |^2 \over \logep^2} \\
& \lesssim  \logep^{-{17 \over 200}}  |\log \logep |^{1\over 2} + \int {|\p_t u |^2 \over \logep^2}.
\end{align*}
Therefore, for any $0 \leq s \leq t \leq \tau_2$ we have
\begin{align*}
\LV \eta(t) - \eta(s) \RV & \lesssim |t - s| \logep^{-{17 \over 200}}  |\log \logep |^{1\over 2} +
\int_s^t \int { | \p_t u |^2 \over \logep^2} ,
\end{align*}
and by \eqref{kineticenergyboundfiniteinterval} estimate
 \eqref{doteta.bound1} follows. Bound \eqref{avg.est1} follows from a similar argument.
\end{proof}

\subsection{Improved supercurrent bounds by time averaging}
\label{subsec:timeaverage}
In this subsection we prove estimates of $T_1-T_7$
after averaging in time.  As in \cite{JSp2},
a simple bound using Cauchy-Schwarz and the 
  Gamma convergence estimates 
only results in bounds on $T_5$ and $T_6$ that 
involve $\sqrt{|\eta|}$. To remedy this problem, 
we follow the idea of \cite{JSp2} and {directly establish bounds} on $j(u)- j(u_\star)$ via
  Hodge decomposition 
 and time-averaging.  
%
Our result is
\begin{proposition} \label{averagedGronwallProp}
Suppose $\tau_2 > \delta_\e$ then for all $t\in [\delta_\e, \tau_2]$ and $j \in \{1,\ldots,7\}$
\begin{equation}\label{avg.finalest}
|  \LA T_j \RA_{\delta_\e}(t)| \lesssim {n^3 T \over \rho^4_\star} \sup_{s \in [\delta_\e,t]} \LA \eta \RA_{\delta_\e} (s)  + \logep^{-{3\over 10}}.
\end{equation}
\end{proposition}

\begin{proof}

We first consider $T_1$--$T_4 $ and $ T_7 $ since we can directly use
\eqref{T1.est}--\eqref{T4.est}, \eqref{T7.est} of Proposition~\ref{dot.eta.est}.  

1.  Since $\eta$ is continuous, we have for some $c \in [\delta_\e,t]$,
\begin{align*}
{\A_\e\over \rho_\star}  \sup_{s\in[\delta_\e, t]}  \eta(s) 
& =  {n^3 T\over \rho^4_\star}  \eta(c)  \\
& \lesssim {n^3 T \over \rho^4_\star}  \LB \LA \eta \RA_{\delta_\e} (c) + C \logep^{-{67\over 200}} |\log\logep|^{1\over 2} \RB \\
& \lesssim {n^3 T \over \rho^4_\star}  \LB \sup_{s\in[\delta_\e, t]} \LA \eta \RA_{\delta_\e} (s) + C \logep^{-{67\over 200}} |\log\logep|^{1\over 2}  \RB
\end{align*}
and since ${n^3 T \over \rho^4_\star} \logep^{-{67\over 200}} |\log\logep|^{1\over 2} \leq \logep^{-{3 \over 10}}$, the result follows.  

Next, we can estimate $\LA T_7 \RA_{\delta_\e}$ by \eqref{kineticenergyboundunboundedvortices} since 
\begin{align*}
\LA \int {|\p_t u |^2 \over \logep^2} \RA_{\delta_\e} 
& {1 \over \delta_\e \logep} \int_0^t \int { \LV \p_t u \RV^2 \over \logep} \lesssim { n^2 \over \rho_\star^2 \delta_\e \logep}\\
& \lesssim \logep^{ - {3 \over10}}.
\end{align*}

2.  Now we turn to the challenging $T_5$ and $T_6$ terms.   For simplicity we write
\[
T_5 = \int_\Omega \zeta_k \LC { j(u) \over |u|} - j(u_\star) \RC_k  
\]
where
\begin{equation}
 \zeta_k := 
\sum_j \p_{x_k x_m}(\frac{\eta_j}{|\eta_j|}\cdot  \Phi_j)  \ 
j_m(u_\star ) ,\quad\quad k=1,2 ,
\label{zeta.def}\end{equation}
and $j_m$ denotes the $m$ component of $j(u_\star)$, $m=1,2$.  Here
 $u_\star(x,t)  = u_\star(x ; \xi(t))$ as usual.

From the definitions and \eqref{justar.Linf}
one finds  $|\zeta| \le C \frac n {\rho_\star^2}$, and $| \operatorname{supp}  \zeta  | \leq C n \rho_\star^2$.
It follows that
\begin{equation}
\| \zeta\|_{L^q(\Omega)} \lesssim  { n^{1+\frac 1q}} {\rho_\star^{\frac 2q - 2}}
\label{zetaLq}\end{equation}
for $1 \leq  q \leq \infty$.

The following 
proof is quite similar to the proof found in Proposition 1 in \cite{JSp2}; however, we include it since the bounds are different, due to  a different differential identity for $\operatorname{div}j(u(t))$.  
We perform a Hodge decomposition 
\begin{equation} \label{hodge.bc} 
j(u) - j(u_\star) = \nabla f_1 + \nabla^\perp f_2
\end{equation}
with boundary conditions either
\begin{equation} \label{f.eqn.DBC}
 f_1 = 0 \hbox{ and } \p_\nu f_2 = 0 \hbox { on } \p \Omega, 
\end{equation}
or
\begin{equation} \label{f.eqn.NBC}
 \p_\nu f_1 = 0 \hbox{ and }  f_2 = 0 \hbox { on } \p \Omega, 
\end{equation}
depending whether we are dealing with Dirichlet or Neumann boundary conditions. 
And so we examine 
\begin{align*}
\Delta f_1 & = \operatorname{div} j(u) \\
-\Delta f_2 & = 2 \LB J(u) - \sum \pi  \delta_{\xi_j} \RB
\end{align*}
with \eqref{f.eqn.DBC} or \eqref{f.eqn.NBC}.

Since $\nabla f_1$ is small only after time-averaging, we write our estimate as 
\begin{align*}
\LA T_5 \RA_{\delta_\e} &= 
\LA \int \zeta \cdot  \frac{j(u)}{|u|} ({1- |u|})  \RA_{\delta_\e}   +  
\LA \int \zeta \cdot 
\curl f_2 \  \RA_{\delta_\e} + 
\LA \int \zeta \cdot 
\nabla f_1  \  \RA_{\delta_\e} \\
& =  \LA \int \zeta \cdot  \frac{j(u)}{|u|} ({1- |u|})  \RA_{\delta_\e} +  
\LA \int \zeta \cdot 
\curl f_2 \  \RA_{\delta_\e}   \ + \ 
\LA \int  \LA \zeta  \RA_{\delta_\e} \cdot 
\LA \nabla f_1  \RA_{\delta_\e} \RA_{\delta_\e} \\
& \quad + \LA \int  \LC \zeta - \LA \zeta  \RA_{\delta_\e} \RC \cdot \LC
\nabla f_1 - \LA \nabla f_1  \RA_{\delta_\e} \RC  \RA_{\delta_\e} \\
 &=
A_1 +A_2 +A_3 + A_4.
\end{align*}
The first term is estimated by the Cauchy-Schwarz inequality,
\begin{equation}
|A_1 | \le \| \zeta\|_{L^\infty} \  \left\| \frac{j(u)}{|u|} \right\|_{L^2} \| ( 1-|u|^2)\|_{L^2}
\le C\frac n{\rho_\star^2} \e E_\e(u) \lesssim  \logep^{-{1\over 3}} .
\label{A1.est}\end{equation}

3.  Next we claim that 
\begin{equation}
|A_2| \ \le C {s_\e}^{3\over 5} \left[n^{6\over 5}\ \rho_\star^{-{8\over 5}} (E_\e(u)+n\pi)^{2\over 5}\right]
\ \lesssim \logep^{-{1\over 3}}.
\label{A2.est}\end{equation}
From the Hodge decomposition and standard elliptic estimates \cite{R} we have
\[
\| \curl f_2\|_{L^p(\Omega)} \le 
\| f_2\|_{W^{1,p}(\Omega)} \ \le C {s_\e}^{\frac 2p -1} (E_\e(u) + n)^{2-\frac 2p}
\]
for $1\le p < 2$, with a constant depending on $p$.
Taking $\frac 1 q  = 1- \frac 1p $ in \eqref{zetaLq}
for $p\in [1,2)$ to be selected, we conclude
that
\[
|A_2| \le \|\zeta\|_{L^q} \|\curl f_2\|_{L^p}
\le C 
{ n^{2 -  \frac 1p}} \ 
{\rho_\star^{-\frac 2p}}\ 
{s_\e}^{\frac 2p-1}\ 
( E_\e(u) + n\pi)^{ 2-\frac 2p}.
\]
Choosing $p= \frac 54$, we arrive at \eqref{A2.est}.

4.  Next, we estimate $A_3$, and here we fundamentally use the time-averaging to control $\nabla f_1$.  
\begin{align*}
\LN \Delta \LA f_1 \RA_{\delta_\e} \RN^2_{L^2} &  = \LN \operatorname{div} \LA j(u) - j(u_\star) \RA_{\delta_\e} \RN^2_{L^2} 
 = \LN \LA \operatorname{div} j(u) \RA_{\delta_\e} \RN^2_{L^2} \\
& = \LN {1\over \delta_\e} \int_{t-\delta_\e}^t { ( i u , \p_t u ) \over \logep}  \RN^2_{L^2}  \\
& \leq \int_\Omega {1\over \delta_\e \logep} \int_{t-\delta_\e}^t {\LV \p_t u \RV^2 \over \logep}   \lesssim    {1 \over \delta_\e \logep} {n^2 \over \rho^2_\star} .
\end{align*}
By standard elliptic estimates
\begin{equation*}
\LN \LA f_1 \RA_{\delta_\e} \RN_{H^2} \lesssim \LN \Delta \LA  f_1 \RA_{\delta_\e} \RN_{L^2} \lesssim { n \over \delta_\e^{1\over 2} \rho_\star \logep^{1\over 2}} .
\end{equation*}
Combining with \eqref{nassumption}, \eqref{rhoassumption}, \eqref{deltaassumption} yields
\begin{align*}
\LV \int \LA \zeta \RA_{\delta_\e} \cdot \LA \nabla f_1 \RA_{\delta_\e}   \RV 
& \leq  \LN \LA \zeta \RA_{\delta_\e} \RN_{L^{4\over3}} \LN \LA \nabla f_1 \RA_{\delta_\e} \RN_{L^4}  \\
& \leq C  \LA \LN \zeta \RN_{L^{4\over3} } \RA_{\delta_\e} \LN \LA \nabla f_1 \RA_{\delta_\e} \RN_{H^1} \\
& \lesssim {n^{11 \over 4} \over \delta_\e^{1\over 2} \rho_\star^{3 \over 2} \logep^{1\over 2}} \lesssim \logep^{-{277 \over 800}} ;
\end{align*}
hence,
\begin{equation} \label{A3.est}
\LV A_3 \RV \lesssim \logep^{-{1\over 3}}.
\end{equation}

5.  Finally, we consider the challenging term $A_4$, and we again following the strategy of \cite{JSp2}. 
The idea is to take 
advantage  of the fact that $\delta_\e$ is small to show that $\zeta$ is close to
$\LA \zeta\RA_{\delta_\e}$, and similarly $\nabla f_1$ and $\LA \nabla f_1\RA_{\delta_\e}$.
First we have  
\begin{align}
|A_4| 
&\le 
\sup_{s\in [t-\delta_\e, t]}\ \| \zeta(s) - \LA \zeta\RA_{\delta_\e}\|_{L^4} 
\sup_{s\in [t-\delta_\e, t]}\ \| \nabla( f_1(s) - \LA f_1\RA_{\delta_\e} )\|_{L^{4\over3}}
\nonumber \\
&\le
\sup_{s, s'\in [t-\delta_\e, t]}\ \| \zeta(s) -  \zeta(s')\|_{L^4} 
\sup_{s,s'\in [t-\delta_\e, t]}\ \| \nabla( f_1(s) - f_1(s') )\|_{L^{4\over 3}}.
\label{52b.split1}
\end{align}

In estimating the quantities in \eqref{52b.split1},
we will  use  that for $s, s' \in [t-\delta_\e, t]$ with $t\in [\delta_\e, \tau_1]$,
\begin{equation}
|a_j(s) - a_j(s')| \lesssim \frac n{\rho_\star}\delta_\e,
\label{a.change}\end{equation}
which follows from  \eqref{ODE}.  Note that \eqref{a.change}
and \eqref{Wderivatives} imply that $|\dot a_j|\lesssim \frac{n}{\rho_\star}$. 
From \eqref{a.change} and \eqref{eta.rocks1}, \eqref{eta.bound} it follows that for
$s,s'$ as above,
\begin{equation}
\sum_{j=1}^n \LV \xi_j (s) - \xi_j (s') \RV \ \le
C \frac{n^2}{\rho_\star} \delta_\e + \eta(s) + \eta(s') + C t_\e \lesssim {n^2 \over \rho_\star} \delta_\e + \D_\e.
\label{xi.change}  \end{equation}

5a. We estimate  $\| \nabla( f_1(s) - f_1(s') )\|_{L^{4\over3} }$.
Assume that $s, s' \in [t-\delta_\e, t]$ for $t\in [\delta_\e, \tau_1]$.
By elliptic regularity, 
\eqref{hodge.bc}, and either \eqref{f.eqn.DBC} or \eqref{f.eqn.NBC} we find that,
\begin{align}
\| \nabla( f_1(s) - f_1(s') )\|_{L^{4\over3} } 
&\le C 
\| \Delta( f_1(s) - f_1(s') )\|_{\dot W^{-1,{4\over3}} }  \nonumber \\ 
&= \ 
\| \nabla \cdot [j(u)(s) - j(u)(s')] \|_{\dot W^{-1,{4\over3}} }  \nonumber \\
&\le 
\|  j(u)(s) - j(u)(s') )\|_{L^{4\over3} }.  
\label{fdiff.est1}\end{align}
Using the triangle inequality and \eqref{gstab.ref2}, it follows that
\begin{align*}
\|  j(u)(s) - j(u)(s') )\|_{L^{4\over3} }
& \lesssim \sqrt{ \A_\e \LC \sup_{r \in [\delta, s]} \eta (r) + \logep^{-{1\over2}}\RC}   +
\|  j(u_\star)(s) - j(u_\star)(s') )\|_{L^{4\over3} } \\
& \lesssim \logep^{- {41 \over 400} } | \log \logep |^{1\over 2}   +
\|  j(u_\star)(s) - j(u_\star)(s') )\|_{L^{4\over3} }.
\end{align*}
The last term on the right-hand side can be estimated by
combining \eqref{chmdifferencebound2} and \eqref{xi.change}, and 
we get 
\begin{align*}
\|  j(u_\star)(s) - j(u_\star)(s') )\|_{L^{4\over3} }
&\lesssim n^{1\over 2}
\left(
\delta_\e {n^2 \over \rho_\star} + \D_\e 
\right)^{1\over 2}\\
&\lesssim {n^{3\over 2} \over \rho_\star^{1\over 2}} \delta_\e^{1\over 2} + n^{1\over 2} \D^{1\over 2}_\e \lesssim \logep^{-{9 \over 80}}.
\end{align*}
The rest of the terms on the right-hand side of
\eqref{fdiff.est1}
are smaller using the bounds on $n, \rho_\star$.  Therefore,  we find that
\begin{equation}
\| \nabla( f_1(s) - f_1(s') )\|_{L^{4\over3} } 
\lesssim
 \logep^{-{41\over 400}} | \log \logep |^{1\over 2}.
\label{fdifferenceest}\end{equation}


5b. We estimate  $\| \zeta(s) - \zeta(s') \|_{L^4 }$.  
Assume that $0 \le t-\delta_\e \le s, s' \le t \le \tau_1$.
In order to find a time-Lipschitz bound on $\zeta$,
we have from the definition \eqref{zeta.def}  that
\begin{align*}
\zeta_k(s) - \zeta_k(s')
& = \sum_j  \p_{x_k x_m}(\frac{\eta_j}{|\eta_j|}\cdot  \Phi_j) (s) \ 
j_m(u_\star )(s)  \\
& \quad -  \sum_j \p_{x_k x_m}(\frac{\eta_j}{|\eta_j|}\cdot  \Phi_j) (s') \ 
j_m(u_\star )(s') \\
& = \sum_j \p_{x_k x_m} \LB \frac{\eta_j}{|\eta_j|}\cdot ( \Phi_j (s) - \Phi_j (s') )\RB  \ 
j_m(u_\star )(s) 
\\
& \quad  + \sum_j \p_{x_k x_m}(\frac{\eta_j}{|\eta_j|}\cdot  \Phi_j) (s')  \ 
\LB j_m(u_\star )(s) - j_m(u_\star )(s') \RB 
\\
& = Z_1 + Z_2.
\end{align*}
First consider $Z_1$.  From the definitions,
\begin{align*}
\LN 
\p_{x_k x_m} \LB \frac{\eta_j}{|\eta_j|}\cdot ( \Phi_j (s) - \Phi_j (s') )\RB  \ 
\RN_{L^\infty  }
& \leq  \LN \p_{x_k x_m}[\varphi (x - a_j(s) ) - \varphi ( x - a_j(s'))] \RN_{L^\infty  } \\
& \leq  C \LN \p_{x_k x_m x_n} \varphi \RN_{L^\infty} \LV a_j(s) - a_j(s') \RV 
 \lesssim {n \over \rho_\star^3} \delta_\e 
\end{align*}
using \eqref{a.change}.

As in \cite{JSp2} we claim that
\begin{equation}
\mbox{supp}\, \nabla^2 \Phi_j(s) \cup \mbox{supp}\, \nabla^2\Phi_j(s')\subset 
B_{3 \rho_\star}(\xi_j(s)) \setminus B_{\frac 12 \rho_\star}(\xi_j(s))
\label{supports.bound}\end{equation}
for all $\e$ sufficiently small.
This follows from \eqref{a.change}, \eqref{xi.change}, and \eqref{eta.rocks1}.  In particular
 the distances separating $a_i(s), a_i(s'), \xi_i(s), \xi_i(s')$ 
are significantly smaller than $ \rho_\star$.

The support condition \eqref{supports.bound} implies that 
$| j(u_\star)(\xi(s))| 
\leq 
{C n \over \rho_\star}$ on the support of $ Z_1$.  Since the support of $Z_1$ has measure bounded by 
$C n \rho_\star^2$,
we conclude that
\begin{align}  \label{t2biiestI}
\LN Z_1 \RN_{L^4} \leq  {C n^{2} \over \rho_\star^4}\left( C n \rho_\star^2\right)^{1\over 4}
\delta_\e \lesssim 
 \frac{n^{9\over 4}}{\rho_\star^{7\over 2}} \delta_\e \lesssim \logep^{-{163\over 800}}.
\end{align}
Finally we consider $Z_2$. 
Since $\LN \sum_j \p_{x_l x_m}  \Phi_j  \RN_{ L^\infty}  \lesssim {1 \over \rho_\star}$, and
using that $\mbox{supp}\, Z_2$ has measure at most $Cn\rho_\star^2$,
we use
H\"older's inequality to estimate
\[
\LN Z_2 \RN_{L^4} \le
\frac C{\rho_\star}
\LN j (u_\star )(s) - j(u_\star )(s') \RN_{L^\infty( \cup_j \mbox{\scriptsize{supp}} \nabla^2 \Phi_j(s')   )}
(C n \rho_\star^2)^{1\over 4}.
\]
It then follows that supp$\,\cup_j\nabla^2\Phi_j(s') \subset \Omega_{\rho_\star/2}(\xi(s)) \cap
\Omega_{\rho_\star/2}(\xi(s'))$. We therefore  use 
\eqref{chmdifferencebound1} to find that
\[
\LN j (u_\star )(s) - j(u_\star )(s') \RN_{L^\infty( \cup_j \mbox{\scriptsize{supp}} 
 \nabla^2 \Phi_j(s') )} 
\leq { C \over \rho^2_\star} \sum_{j=1}^n \LV \xi_j (s) - \xi_j (s') \RV.
\]
Consequently,  \eqref{xi.change}
and \eqref{avg.est1} imply that
\begin{align}  
\LN Z_2 \RN_{L^4} 
&
\leq  C({ n \over \rho_\star^2})^{5\over 4} 
( \frac{n^2}{\rho_\star} \delta_\e + \eta(s) + \eta(s') + C t_\e) \nonumber  \\
&\lesssim
 {n^{5\over 4} \over \rho^{5\over 2}_\star} 
\sup_{\delta \leq s \leq t}  \LA \eta(t)\RA_{\delta_\e} +
{n^{5\over 4} \over \rho_\star^{5\over 2}} \LB {n^2 \over \rho_\star} \delta_\e + \delta_\e {n^{3\over 2} \over \rho_\star} \sqrt{\A_\e ( \D_\e +\logep^{-{1\over2}})} + t_\e \RB 
\nonumber \\
& \lesssim {n^3 T \over \rho_\star^4} \sup_{\delta \leq s \leq t}  \LA \eta(t)\RA_{\delta_\e} + \logep^{-{159 \over 800}} | \log \logep |^{1\over 2}.
\label{t2biiestII}
\end{align}
Combining  \eqref{t2biiestI} and \eqref{t2biiestII}  yields
\begin{align}
\LN \zeta(s) - \zeta(s') \RN_{L^4 } 
\leq  
C \frac{n^3 T}{\rho_\star^4}
\sup \LA \eta(t)\RA + {C\logep^{-{159 \over 800}}| \log \logep |^{1\over 2}.}
\label{zetadifferenceest}\end{align}

6. Finally we combine the above with 
\eqref{52b.split1}, \eqref{fdifferenceest}, and \eqref{zetadifferenceest}
to deduce that
\begin{equation} \label{A4.est}
|A_4| \lesssim {n^3 T \over \rho_\star^4} \sup \LA \eta(t)\RA +  \logep^{-{3\over 10}}.
\end{equation}
Combining \eqref{A1.est}, \eqref{A2.est}, \eqref{A3.est}, and \eqref{A4.est} yields the bounds on $\LA T_5 \RA_{\delta_\e}$ and $\LA T_6 \RA_{\delta_\e}$, and this finishes the proof of Proposition~\ref{averagedGronwallProp}.
%
%
%
%
%
%
%
\end{proof}
\subsection{Continuity arguments}
\label{subsec:continuity}
We now complete the proof of our Theorem~\ref{thm:quantdynamics}. 
\begin{proof}[Proof of Theorem~\ref{thm:quantdynamics}]

Recall $\tau_{max} = \min \{ \tau_0, C  \sqrt{ |\log \logep| {\rho_\star^4 \over n^3}} \}$ denotes the claimed longest possible time interval for which we can pin the vortices to the $a_j(t)$'s.  The main point of the proof will be to show that
 all relevant estimates hold up to time
 $\tau_{max}$ by a combination of continuity arguments for the Jacobian and a Gronwall estimate on $\LA \eta \RA_{\delta_\e}$.
  If $\tau_2 = \tau_1 = \tau_{max}$ then Theorem~\ref{thm:quantdynamics} follows
directly. We assume this statement does not hold and the following is 
a proof by contradiction in several parts.

1.  We first claim for any $T > 0$ that the solution operator to \eqref{tdgl} is continuous  from 
$[0,T] \to \dot{H^1}$, in particular
\begin{equation}  \label{modcontinuityest}
\| \nabla u(t) - \nabla u(s) \|_{L^2} \leq C_\e \ o(|t-s|)  
\end{equation}
for $0\leq s \leq t \leq T$, where $C_\e$ depends on $\e$ and $T$ but is independent of $t,s$.  
It is standard theory (see for instance \cite{Evans} Section 5.9, Theorem 4) that if 
$u\in L^2(0,T;H^2(\Omega;\C))$ and 
$\partial_t u\in L^2(0,T;L^2(\Omega;\C))$ then
$u\in C^0([0,T];H^1(\Omega;\C))$, which implies \eqref{modcontinuityest}. 
These conditions are true for solutions of \eqref{tdgl} since by the gradient flow 
property,
\[
\int_0^T \int_\Omega |\p_t u|^2 \le CE_\e(u(0)) \leq C_\e
\]
and from \eqref{conservationmass} {and the initial conditions one finds $\| u (t)\|_{L^\infty} \leq 1$} due to the maximum principle; hence, 
\begin{align*}
\int_0^T \int_\Omega |\Delta u|^2 
& \le C\int_0^T \int_\Omega |\p_t u|^2  + 
C\int_0^T \int_\Omega {1\over \e^2} |u|^2 {(1 - |u|^2)^2\over \e^2}  \\
& \le C E_\e(u(0)) + C_\e T\|u(t)\|^2_{L^\infty } E_\e(u_0)
 \le C_\e,
\end{align*}
where $C_\e$ depends on $T$ and $\e$.  

2.  Since $J(u) =  \det \nabla u = -\nabla^\perp u_1 \cdot \nabla u_2$, where $u = u_1 + i u_2$,   then for any $0 \leq s \leq t \leq T$
\begin{align}
\LN J(u)(t) - J(u)(s) \RN_{\dot{W}^{-1,1} }  
& = \sup_{ \| \phi \|_{W^{1,\infty}_0 } \leq 1} \LV \int \phi \LC  J(u)(t) - J(u)(s) \RC \RV \nonumber  \\
& \leq C \LN \nabla u(t) - \nabla u(s) \RN_{L^2} \LN \nabla u(t) + \nabla u (s) \RN_{L^2} \nonumber \\
& \leq C_\e \ o(|t-s|)  \label{modcontinuityestJac}
\end{align}
 from \eqref{modcontinuityest}, where $C_\e$ depends on $\e$ and $T$.

3.  We claim that $0 \leq \tau_2 < \tau_1\leq \tau_{max}$.  Suppose this claim fails, then by the definitions of $\tau_1, \tau_2$ and our assumption,  $0\leq \tau_2 = \tau_1 < \tau_{max}$.  
By maximality of $\tau$, we have
\begin{equation} \label{badDestimate}
D(a(\tau_1)) \leq 1
\end{equation}
and
\begin{equation} \label{badJacobianestimate}
\LN J(u)(\tau_1)  - \sum_{j=1}^n  \delta_{a_j(\tau_1)} \RN_{\dot{W}^{-1,1} }
\leq \D_\e .
\end{equation} 

Consider first \eqref{badDestimate}.  Since $\tau_2 = \tau_1$ then $\eta(\tau_1) \leq  {1\over2} \D_\e ={1 \over 2} \logep^{-{1\over 4}}$, then \eqref{excessenergyestimateviaetacontrol} implies $D(a(\tau_1)) \leq 
\mathcal{A}_\e \LB \sup_{s\in[0,\tau_2]} \eta(s) + \logep^{-{1\over2}} \RB \leq {1\over2}$.  We now claim that there exists a $\mu_0$ such that for all
$\tau_1 \leq \tilde{t} \leq \tau_1 + \mu_0$ then $D(a(\tilde{t})) \leq 1$.  In particular, by \eqref{ODE} and \eqref{Wderivatives} 
\begin{align*}
D(a(\tilde{t})) &
= D(a(\tau_1)) + \int_{\tau_1}^{\tilde{t}} \LV \dot{a} \RV^2  - \int_{\tau_1}^{\tilde{t}} \int_\Omega {\LV \p_t u \RV^2 \over \pi \logep}   
 \leq {1\over2} +  C \mu_0 {n^3 \over \rho_\star^2} \leq 1
\end{align*}
for $\mu_0$ small enough.  

Next consider \eqref{badJacobianestimate}.   
Again $\eta(\tau_1) \leq  {1\over 2} \D_\e$ and so by Lemma~\ref{lemmameasuringmu} we have 
\[
\| J(u)(\tau_1) - \pi\sum  \delta_{a_j(\tau_1)} \|_{\dot{W}^{-1,1}} 
\leq {1\over 2} \logep^{-{1\over4}} + C t_\e \leq {5\over 8} \logep^{-{1\over 4}} 
\]
for $\e$ small enough.
By \eqref{modcontinuityestJac} there exists a $\mu_1 >0$ such that for all $\tau_1 \leq \tilde{t} \leq \tau_1 + \mu_1$
\[
\| J(u)(\tilde{t}) - J(u)(\tau_1) \|_{\dot{W}^{-1,1}} \leq C(E_\e(u_0)) \ o(\mu_1) \leq {1\over 8} \logep^{-{1\over 4}}
\]
for $\mu_1$ small enough.  Furthermore,  there exists $\mu_2$ such that for $\tau_1 \leq \tilde{t} \leq \tau_1 + \mu_2$
\begin{align*}
\| \pi \sum \delta_{a_j(\tilde{t})} -  \pi \sum \delta_{a_j(\tau_1)} \|_{\dot{W}^{-1,1}} & \leq C \sum \LV a_j(\tilde{t} ) - a_j(\tau_1) \RV 
\leq C \mu_2 \sum \LV \nabla_{a_j} W \RV \\
& \leq C {n^2 \over \rho_\star} \mu_2 \leq  {1 \over 8} \logep^{-{1\over 4}}
\end{align*}
for $\mu_2$ small enough, where we used \eqref{Wderivatives} in the third inequality.  Therefore, for $\mu = \min \{ \mu_0, \mu_1 , \mu_2 \} > 0$ and all $\tau_1 \leq \tilde{t} \leq \tau_1 + \mu$ we have 
\begin{align*}
 \LN J(u)(\tilde{t}) - \sum \pi \delta_{a_j(\tilde{t})} \RN_{\dot{W}^{-1,1}} & \leq \| J(u)(\tilde{t}) - J(u)(\tau_1) \|_{\dot{W}^{-1,1}}  \\
 & \quad + \| \pi \sum \delta_{a_j(\tilde{t})} -  \pi \sum \delta_{a_j(\tau_1)} \|_{\dot{W}^{-1,1}} \\
 & \quad + \| J(u)(\tau_1) - \sum  \delta_{a_j(\tau_1)} \|_{\dot{W}^{-1,1}} \\
& \leq {7\over 8} \logep^{-{1\over 4}} \leq \D_\e,
\end{align*}
and $D(a(\tilde{t})) \leq 1$.  
As $\mu>0$, this contradicts the maximality of $\tau_1$.

4.  We claim that if $\tau_2 \leq  \logep^{-{1\over 4}}$ then
$\tau_2$ cannot be maximal.
First,
using \eqref{doteta.bound1} and $\eta(0) \leq {1\over 8} \mathcal{D}_\e$, we have for all $0 \leq t \leq  \tau_2$
\begin{equation*} \label{etalineargrowthestimates}
\begin{split}
\eta(t)  & \leq {1\over8 }\mathcal{D}_\e +  t C \logep^{-{7\over80}} |\log \logep|^{1\over2} \\
& \leq  {1\over8 }\mathcal{D}_\e +  \tau_2 C \logep^{-{7\over80}} |\log \logep|^{1\over2} 
 \leq  {1\over8 }\mathcal{D}_\e + {1\over 8} \mathcal{D}_\e, 
\end{split}
\end{equation*}
and so $\eta(\tau_2) \leq {1\over 4} \mathcal{D}_\e$.  
Next, we use \eqref{dotetainitialestimates}, Young's Inequality, and Lemma~\ref{gronwalllemma} below with
$x(t) = \eta(t)$, $A = \logep^{3 \over 10}  $, $B = \logep^{-{2\over5}}$, and $G(t) = \int_\Omega {|\p_t u |^2 \over \logep^2}$ to get for all $\tau_2\leq t \leq \tau_2 + \tilde \mu$
\begin{equation} \label{badgronwall}
\eta(t) \leq \exp\LC \tilde \mu \logep^{3 \over 10}  \RC \LB \eta(\tau_2) + \logep^{-{2\over 5}} \RB 
\leq {1\over 4} \mathcal{D}_\e
\end{equation} 
for $0<\tilde\mu < \tau_1 - \tau_2$ small enough.  
This contradicts the maximality of $\tau_2$.

{5.  Using  Step 4, we have
 $\tau_1 > \tau_2\geq \delta_\e$.} 
 
6.  We now show that the assumption $\tau_2<\tau_{max}$ 
leads to a contradiction.
By Step 4 and Step 5  we see that $\tau_2 > \delta_\e$ with $\sup_{0\leq s \leq \delta_\e} \eta(s) \leq   {1\over 4} \mathcal{D}_\e$; therefore, $\LA \eta \RA_{\delta_\e}(\delta_\e)\leq {1\over 4} \mathcal{D}_\e$. 
From \eqref{avg.finalest} in Proposition~\ref{averagedGronwallProp}
we have the differential inequality for the averaged $\LA \eta \RA_{\delta_\e}$,
\[
{d \over d t} \LA \eta \RA_{\delta_\e} \leq 
|  \LA T_j \RA_{\delta_\e}(t)| \lesssim {n^3 \over \rho^4_\star} \sup_{s \in [\delta_\e,t]} \LA \eta \RA_{\delta_\e} (s)  + \logep^{-{3\over 10}}.
\]
for all $\delta_\e \leq t \leq \tau_2$.  
Using the Gronwall argument from Lemma~\ref{gronwalllemma} below with $x(t) = \LA \eta \RA_{\delta_\e}$, $A = {n^3 \over \rho^4_\star}$, $B = \logep^{-{3\over 10}}$, and $G(t) = 0$, we find 
\[
\LA \eta (t) \RA_{\delta_\e} \leq \LC \LA \eta (\delta_\e) \RA_{\delta_\e} + \logep^{-{3 \over 10}} {\rho_\star^4 \over n^3 \tau_{max}}  \RC  \exp \LB { n^3 \tau_{max} (t - \delta_\e) \over \rho_\star^4}   \RB \leq {3\over8} \D_\e
\]
 for all $\delta_\e \leq t \leq \tau_2$. In particular, $\eta(\tau_2) \leq \LA \eta \RA_{\delta_\e} (\tau_2) + |\eta (\tau_2) - \LA \eta \RA_{\delta_\e} (\tau_2) | \leq {4 \over 9} \mathcal{D}_\e$. 
Repeating the argument in Step 4 and using \eqref{badgronwall},
we see that the estimate necessary for $\tau_2$ also holds at
$ \tau_2 + \widehat{\mu}$ for some $\widehat{\mu} < \tau_1 - \tau_2$,
contradicting the maximality of $\tau_2$.

7. From Step 3 and Step 6 we see that $\tau_2=\tau_1=\tau_{max}$, which proves 
\eqref{energydensityconvergencethm1} and \eqref{Jacobianconvergencethm1}.

8. Finally, we prove \eqref{honeconvergence} and \eqref{lpconvergence}.  Since $\Omega_{\rho_\star}(a(t)) \subset \Omega_{\sigma_\star}(\xi(t))$ then $\int_{\Omega_{\rho_\star}(a(t))}
e_\e(|u(t)|) + {1\over 4} | {j(u(t)) \over |u(t)|} - j(u_\star(a(t))|^2 
\leq \int_{\Omega_{\rho_\star}(a(t))}
e_\e(|u(t)|) + {1\over 2} | {j(u(t)) \over |u(t)|} - j(u_\star(\xi(t))|^2 +\int_{\Omega_{\rho_\star}(a(t))}
 {1\over 2} |  j(u_\star(\xi(t)) - j(u_\star(a(t))|^2$.  From \eqref{chmdifferencebound1} we find 
 $\int_{\Omega_{\rho_\star}(a(t))} |  j(u_\star(\xi(t)) - j(u_\star(a(t))|^2 \lesssim \logep^{-{1\over 5}}$.  Bound \eqref{lpconvergence} follows from a similar estimate, using  \eqref{chmdifferencebound2} instead.  
\end{proof}

We conclude with the following Gronwall estimate used at the end of the proof of Theorem~\ref{thm:quantdynamics}.
\begin{lemma} \label{gronwalllemma}
Suppose $A, B$ are positive constants and $G(t)\geq 0$ is integrable, and suppose
\[
{d \over d t} x(t) \leq  A \, \sup_{s\in [0,t]} x(s) +  B +  G(t)
\]
then for any $0 \leq r \leq t$
\begin{equation*}
x(t) \leq e^{A( t - r)} \LB  x(r)  + { B  \over A} + \int_r^t G(s) \RB   .
\end{equation*}
\end{lemma}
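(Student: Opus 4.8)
The plan is to remove the additive constant $B$ by a shift and then run a short discrete iteration on the running supremum. First I would set $v(t):=x(t)+B/A$; since $\sup_{s\in[0,t]}x(s)=\bigl(\sup_{s\in[0,t]}v(s)\bigr)-B/A$ and $A>0$, the hypothesis rewrites as $\dot v(t)\le A\sup_{s\in[0,t]}v(s)$, with $v(0)=x_0+B/A$. The lemma will then follow once I show that any $C^1$ (or locally absolutely continuous) function $v$ with $v(0)\ge 0$ satisfying $\dot v(t)\le A\sup_{[0,t]}v$ obeys $v(t)\le v(0)e^{At}$, because then $x(t)=v(t)-B/A\le v(0)e^{At}-B/A\le (x_0+B/A)e^{At}$. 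The step $v(0)\ge 0$, i.e. $x_0\ge -B/A$, is not stated explicitly but holds in our application, where $x=\langle\eta\rangle_{\delta_\e}\ge 0$.

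Next I would introduce the running supremum $w(t):=\sup_{0\le s\le t}v(s)$, which is continuous (from continuity of $v$), nondecreasing, and satisfies $w(t)\ge w(0)=v(0)\ge 0$. The key local estimate is: for any $t\ge 0$ and any $h>0$ with $Ah<1$, one has $w(t+h)\le w(t)/(1-Ah)$. Indeed, for $s\in[t,t+h]$, integrating the differential inequality gives $v(s)=v(t)+\int_t^s\dot v\le v(t)+\int_t^s Aw(r)\,dr\le w(t)+Ah\,w(t+h)$, using that $w$ is nonnegative and nondecreasing; since $v(s)\le w(t)$ also for $s\in[0,t]$, taking the supremum over $[0,t+h]$ yields $w(t+h)\le w(t)+Ah\,w(t+h)$, which rearranges to the claimed bound.

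Finally I would iterate this over a uniform partition: fixing $t>0$ and $N$ large enough that $At/N<1$, and setting $h=t/N$, $t_k=kh$, repeated application gives $w(t)\le w(0)(1-At/N)^{-N}$; letting $N\to\infty$ and using $(1-At/N)^{-N}\to e^{At}$ produces $w(t)\le w(0)e^{At}=v(0)e^{At}$, and hence $v(t)\le w(t)\le v(0)e^{At}$, which is what was needed. An alternative to this last step is to note that $w$ is locally Lipschitz with upper right derivative $\le Aw$ a.e. and apply the standard integral Gronwall inequality, but the elementary iteration avoids Dini-derivative technicalities.

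This argument is essentially routine; the only genuinely delicate point is the sign. The inequality $\int_t^s Aw\le Ah\,w(t+h)$ requires $w\ge 0$, which is why nonnegativity of the initial datum is used — a monotone solution such as $x(t)=x_0+(Ax_0+B)t$ (valid with equality when $Ax_0+B\le 0$) shows that for $x_0<-B/A$ the stated bound can fail, the left side decaying only linearly while the right side $(x_0+B/A)e^{At}$ decays exponentially to $-\infty$.
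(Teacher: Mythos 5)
Your argument is correct, and it is close in spirit to the paper's proof but differs in execution. The paper also works with the running supremum, setting $m(t)=\sup_{s\in[0,t]}x(s)$, asserting $\dot m(t)=\max\{\dot x(t),0\}\le A\,m(t)+B$, and then concluding by the usual differential Gronwall inequality; you instead shift away the constant ($v=x+B/A$), prove the compounding bound $w(t+h)\le w(t)/(1-Ah)$ for the running supremum $w$ of $v$, and pass to the limit over a partition to produce $e^{At}$. What your route buys is that you never differentiate the supremum, so you avoid the (mild) regularity issue in the paper's identity for $\dot m$, and you make explicit the sign hypothesis that both arguments secretly need: the paper's step ``if $\dot m=0$ then $\dot m\le Am+B$'' requires $Am+B\ge 0$, just as your inequality $\int_t^s Aw\le Ah\,w(t+h)$ requires $w\ge 0$; your observation that the stated bound can actually fail when $x_0<-B/A$ (e.g.\ $x(t)=x_0+(Ax_0+B)t$) is accurate, and the gap is harmless in the application since there $x=\LA\eta\RA_{\delta_\e}\ge 0$. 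So your proof is a valid, slightly more elementary and more careful variant of the paper's; the paper's is shorter at the cost of these unstated regularity and sign points.
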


\begin{proof}
Let $m(t) = \sup_{s \in [0,t]} x(s)$ then $\dot{m}(t) \le \max \{ \dot{x}(t), 0\}$ since the maximum can increase only if $x$ increases.  On the one hand, if $\dot{m}(t) \le \dot{x}(t)$ then $\dot{m}(t) \le \dot{x}(t) \leq A m(t) + B + G(t)$.  On the other hand, if $\dot{m} (t) \le 0$ then 
$\dot{m}(t) \leq A m(t) + B + G(t)$.  The estimate follows.

\end{proof}
\section{Hydrodynamic limit}
\label{sec:hydrodynamic}
In this section we will prove Theorem \ref{thm:hydrodynamic}
in two steps. First, we show that under good assumptions on the
initial data, the ODE vortex cloud converges to a solution
of the mean field equation. 

Then we show that these assumptions on the initial data and
 those of Theorem~\ref{thm:quantdynamics}
 can be simultaneously fulfilled for a suitably
 chosen sequence $n\to\infty$, and then we can 
 relate the rescaled energy densities
 ${e_{\e_n}(u_{\e_n}(t)) \over n \pi |\log \e_n| }$
 to the mean field equation.

\begin{proposition}[Convergence of ODE to mean field PDE]
 \label{propODELimit}
Consider a sequence of initial data $\{ a_j(0) \}_{j=1}^n$, and assume $$- {1\over n^2} \sum_{j\neq k} N_n(a_j(0), a_k(0)) \lesssim 1$$ for every $n$.   
Let $a_j(t)$ solve \eqref{ODE},
 with $W(a)$ in the Dirichlet case.  
Setting $\omega_n(t) = {1\over n} \sum_{j=1}^n\delta_{a_j(t)}$ then 
in the rescaled time $\overline{t} = n t$, we find 
$\omega_n(\overline{t}) \to \omega(\overline{t})$ in $ \mathcal{M}$ for all $\overline{t}$ and $\omega$ is a generalized interior weak solution 
(as defined in \eqref{generalizedweaksolution})
to
\[
\p_{\overline{t}} \omega + \operatorname{div}\LC 4\pi\nabla \LC \Delta_{\mathcal{N}}^{-1} \omega \RC \omega \RC = 0
\]
with $\omega_0 = \lim \omega_n(0)$.   Finally, we have 
$v \in  L^2_{loc}(\Omega)$ where $v = 4\pi\nabla \Delta^{-1}_\mathcal{N}(\omega)$.  
\end{proposition}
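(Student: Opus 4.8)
\textbf{Proof strategy for Proposition~\ref{propODELimit}.}

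The plan is to adapt the Schochet--Liu-Xin symmetrization argument, as implemented by Lin-Zhang on $\R^2$, to the bounded-domain setting with the Neumann function $N_n$. First I would fix a test function $\chi \in C_0^\infty(\Omega)$ and compute, using the rescaled ODE $\frac{d}{d\ot} a_j = -\frac{1}{\pi n}\nabla_{a_j} W(a) = \frac{1}{n}\sum_{k\neq j}\nabla_x N_n(a_j,a_k) + \frac{1}{n}\nabla_x H_n(a_j,a_j)$, the identity
\[
\frac{d}{d\ot}\int_\Omega \chi \, \omega_n = \frac{1}{n^2}\sum_j \sum_{k\neq j}\nabla\chi(a_j)\cdot \nabla_x N_n(a_j,a_k) + \frac{1}{n^2}\sum_j \nabla\chi(a_j)\cdot\nabla_x H_n(a_j,a_j).
\]
The diagonal-$H_n$ term is $O(1/n)$ and vanishes in the limit since $\nabla_x H_n(x,x)$ is locally bounded on $\operatorname{supp}\chi \Subset \Omega$ (from the Sandier-Soret construction). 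For the double sum, I would symmetrize in $j\leftrightarrow k$, writing it as $\frac{1}{2n^2}\sum_{j\neq k}(\nabla\chi(a_j)-\nabla\chi(a_k))\cdot\nabla_x N_n(a_j,a_k)$, and then split $N_n(x,y) = \log|x-y| + H_n(x,y)$. The singular logarithmic kernel gives the term $\frac{1}{2n^2}\sum_{j\neq k}(\nabla\chi(a_j)-\nabla\chi(a_k))\cdot\frac{a_j-a_k}{|a_j-a_k|^2}$, which is bounded by $\|\nabla^2\chi\|_\infty$ uniformly (the near-diagonal singularity cancels), exactly as in \cite{LinZhang, LiuXin}; rewriting $(\nabla\chi(x)-\nabla\chi(y))\cdot\frac{x-y}{|x-y|^2}$ using second-order Taylor expansion produces the symmetric bilinear form $\frac{1}{2}(\p_{x_1}^2\chi - \p_{x_2}^2\chi)(\cdot)$-type kernel that appears in \eqref{generalizedweaksolution}. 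The smooth part $H_n$ contributes a term controlled by $\|\nabla^2\chi\|_\infty \sup_{\operatorname{supp}\chi\times\operatorname{supp}\chi}|\nabla_x\nabla_y H_n|$, which again is finite.

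Next I would establish compactness: the uniform bound $-\frac{1}{n^2}\sum_{j\neq k}N_n(a_j(0),a_k(0))\lesssim 1$ plus the energy-decrease $W(a(\ot))\le W(a(0))$ gives a uniform-in-$n$, uniform-in-time bound on $-\frac{1}{n^2}\sum_{j\neq k}N_n(a_j(\ot),a_k(\ot))$, hence on the $\dot H^{-1}$-type (renormalized) energy of $\omega_n$; combined with the equicontinuity in time of $\ot\mapsto\int\chi\,\omega_n$ just derived (the right-hand side is bounded uniformly), Arzelà-Ascoli gives a subsequence with $\omega_n(\ot)\to\omega(\ot)$ in $C([0,T];\mathcal{M}\text{-weak}*)$ for each $T$. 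Passing to the limit in the weak identity then requires showing $v_n := \nabla\Delta_\mathcal{N}^{-1}\omega_n$ converges strongly in $L^2_{loc}$ (away from $\p\Omega$) to $v = \nabla\Delta_\mathcal{N}^{-1}\omega$ so that the quadratic terms $v_1^2-v_2^2$ and $v_1v_2$ pass to the limit. This is where the renormalized-energy bound enters: it prevents concentration of $|\omega_n|$ in the interior, and the kernel estimates on $\nabla_x\nabla_y N_n$ from Section~\ref{DirichletBounds} give the needed equi-integrability of $v_n\otimes v_n$ on compact interior subsets; the local energy bound $v(\ot)\in L^2_{loc}(\Omega)$ is inherited in the limit by lower semicontinuity.

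The main obstacle I anticipate is the strong $L^2_{loc}$ convergence of $v_n$, i.e.\ ruling out interior concentration of the vorticity so that the nonlinear terms $v_n v_n$ converge to $v v$ rather than to $vv + (\text{defect measure})$. On $\R^2$ Liu-Xin and Lin-Zhang handle this via the logarithmic interaction energy bound together with the explicit Biot-Savart kernel; here one must instead work with $N_n$ and its harmonic correction $H_n$, using Lemma~\ref{BehaviorNearBoundaryNandH} to control $H_n$ and its derivatives on interior compacta uniformly in $n$. Since the test functions have compact support in $\Omega$, all the boundary behavior of $N_n$ is harmless, and the concentration-cancellation argument is genuinely local — so I expect this step to go through exactly as in \cite{LinZhang} once the uniform kernel bounds of Section~\ref{DirichletBounds} are invoked, though the bookkeeping for the near-diagonal cancellation in the symmetrized sum requires care. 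The remaining identification that the limit satisfies \eqref{generalizedweaksolution} is then just matching the symmetric second-derivative kernel obtained from Taylor expansion against the definition, and noting $\omega\ge 0$, $\int_\Omega\omega = 2\pi$ (after the $\frac1n$ normalization against $n$ vortices of mass $2\pi/n$... here mass $2\pi$ total once we recall each vortex carries $\pi$ and $\omega_n=\frac1n\sum\delta_{a_j}$ is later matched to $\frac{1}{n}\frac{e_\e}{\pi\logep}$) are preserved under weak-$*$ limits.
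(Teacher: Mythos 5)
Your overall strategy (symmetrization of the interaction sum, splitting $N_n=\log|x-y|+H_n$, $O(1/n)$ for the self-interaction term, compactness from the renormalized energy bound) is in the spirit of the paper's proof, which itself follows Schochet/Liu--Xin/Lin--Zhang. The paper's mechanism, however, is to use $\delta_{a_k}=\Delta N_n(\cdot,a_k)$ to rewrite the nonlinear term as a double integral $\int\!\!\int_{y\neq z}$ of $\omega_n(y)\omega_n(z)$ against the kernels $\mathcal{K}_{11}-\mathcal{K}_{22}$ and $\mathcal{K}_{12}$ of \eqref{matrixkernal}, which Lemma~\ref{EvansMullerEstimates} shows are \emph{bounded} and continuous off the diagonal; the limit then follows from weak-$*$ convergence of $\omega_n\otimes\omega_n$ on $\{|y-z|\ge r\}$ together with the maximal vorticity decay $M_r(\omega_n)\lesssim |\log r|^{-1/2}+n^{-1/2}$ of Lemma~\ref{maximalvorticityestimate} to kill the near-diagonal region. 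At no point is any convergence of the velocities $v_n$ needed.

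The genuine gap is in your final step: you propose to pass to the limit in the quadratic terms by proving strong $L^2_{loc}$ convergence of $v_n=\nabla\Delta_\mathcal{N}^{-1}\omega_n$. For the empirical measures $\omega_n=\frac1n\sum_j\delta_{a_j}$ this quantity is not even in $L^2_{loc}$: near each vortex $|\nabla N_n(x,a_j)|\sim |x-a_j|^{-1}$, whose square is non-integrable in two dimensions, so "strong $L^2_{loc}$ convergence of $v_n$" is not a meaningful intermediate claim and cannot be the route to the limit. The correct replacement is exactly the off-diagonal double-integral formulation \eqref{FormAnBn}: the diagonal is excised from the start, the kernel is bounded there by the Delort-type cancellation in $\mathcal{K}_{11}-\mathcal{K}_{22}$ and $\mathcal{K}_{12}$, and the contribution of a shrinking neighborhood of the diagonal is controlled by $M_r(\omega_n)$, which is where the hypothesis $-\frac{1}{n^2}\sum_{j\neq k}N_n(a_j,a_k)\lesssim 1$ (propagated in time by $W(a(t))\le W(a(0))$) actually enters. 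A related soft spot: your symmetrized kernel $(\nabla\chi(y)-\nabla\chi(z))\cdot\frac{y-z}{|y-z|^2}$ is bounded but \emph{discontinuous} on the diagonal, so even in your formulation you cannot pass to the limit by weak-$*$ convergence alone; you still need the $M_r$ decay for that step, and you would additionally have to prove that the limiting symmetrized expression coincides with the $v\otimes v$ form in \eqref{generalizedweaksolution} for measure-valued $\omega$ with $v$ only in $L^2_{loc}$ --- an identification that the paper's route obtains for free because it works with the $\mathcal{K}$ kernels (i.e.\ the $v\otimes v$ form) from the outset. The statement $v\in L^2_{loc}$ for the \emph{limit} $\omega$ is then a separate conclusion, obtained from $\mathcal{K}_{11}+\mathcal{K}_{22}\le 2\log|y-z|+C$ and $M_r(\omega)\to0$, not by lower semicontinuity from the (nonexistent) $L^2$ bounds on $v_n$.
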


We first show that the vortex density function $\omega_n(\overline{
t})$ satisfies an equation very close to \eqref{generalizedweaksolution}.  
Recall from \cite[Theorem VIII.3]{BBH}
that
\begin{align*}
- \nabla_{a_j} W(a)  = 2\pi\nabla S_n^j (a_j) 
\end{align*}
where
\(
S_n^j(x)  = \sum_{k=1}^n N_n(x,a_k) - \log |x - a_j| 
\)
so
\begin{align*}
\nabla S_n^j(x) & =  \sum_{k \neq j}^n \nabla N_n(x,a_k) + \LB \nabla N_n(x,a_j) - {x - a_j \over |x - a_j|^2} \RB  \\
& = \sum_{k \neq j}^n \nabla N_n(x,a_k) +  \nabla H_n( x, a_j) .
\end{align*}
For any test function $\chi \in C_0^\infty (\Omega)$ and with $\omega_n = {1\over n } \sum_{j=1}^n \delta_{a_j(t)}$ we have  
\begin{align*}
{1\over n} \p_t  \int \chi \omega_n(t) & = {1\over n^2} \p_t  \sum_{j=1}^n \chi (a_j)  = {1\over n^2} \sum_{j=1}^n \p_\ell \chi (a_j) \LB \dot{a}_j \RB_\ell \\
& = -  \frac{1}{\pi n^2} \sum_{j=1}^n \p_\ell \chi (a_j) \LB \nabla_{a_j} W(a) \RB_\ell  =  \frac{2}{n^2} \sum_{j=1}^n \p_\ell \chi (a_j) { \p_\ell S_n^j} (a_j) \\ 
& =  {2\over n^2} \sum_{j=1}^n  \int \p_\ell \chi (x)  \p_\ell S_n^j (x) \delta_{a_j}(x)   .
\end{align*}
Using $2\pi\delta_{a_k} = \Delta N_n(\cdot,a_k)$  and
 the above identity for $\nabla S_n^j$  yields 
\begin{align*}
 {1\over n} \p_t  \int \chi  \omega_n(t) & =   
{1\over \pi} {1\over n^2} \sum_{j=1}^n  \int \p_\ell \chi (x)
\LB \sum_{k \neq j} \p_\ell N_n(x,a_k) + \p_\ell H_n(x,a_j) \RB \p_m \p_m N_n(x,a_j) dx 
 \\
  & =  {1\over  \pi}  {\int \int \int_{y \neq z}}  \p_\ell  \chi(x) \p_\ell N_n(x,y) \p_m \p_m N_n(x,z) \omega_n(y) \omega_n(z) dy dz dx
 \\
 & \quad + {1\over \pi} {1\over n} \int \int \p_\ell \chi(x) \p_\ell H_n(x,y) \p_m \p_m N_n(x,y) \omega_n (y) dy dx \\
& = A_n + B_n .\\ 
\end{align*}

Following \cite{LiuXin} we define the matrix-valued function $\mathcal{K}(n, y,z; \eta)$ 
\begin{equation} \label{matrixkernal}
\mathcal{K}_{jk}(n, y, z, \eta) = \int_\Omega \eta(x) \p_{x_j} N_n(x,y) \p_{x_k} N_n(x,z) dx,
\end{equation}
and after a short calculation
using symmetry and \eqref{eq:ibpdiv},
 one can rewrite $A_n$ and $B_n$ as 
\begin{equation*}
\label{FormAnBn}
\begin{split}
A_n & = - \frac1\pi \int \int  \int_{y\neq z} \LC \mathcal{K}_{11} - \mathcal{K}_{22} \RC\LC n, y, z, \LC \p_{x_1}^2  - \p_{x_2}^2  \RC\chi\RC  \omega_n(y) \omega_n(z) dy dz dx \\
& \quad - \frac4\pi\int \int \int_{y\neq z}  \mathcal{K}_{12} \LC n, y, z,  \p_{x_1} \p_{x_2}  \chi\RC  \omega_n(y) \omega_n(z) dy dz dx 
=:A_n^1 + A_n^2\\
B_n & =  {1\over n \pi} \int \int \p_m \p_m \LC \p_\ell \chi  \ \p_\ell H_n(x,y) \RC N_n (x,y) \omega_n(y) dy dx.
\end{split}
\end{equation*}
We will show that as $n \to \infty$,  $B_n$ converges to zero and $A^j_n$'s converge to the form of the generalized weak solution.  However, in order to complete the proof, we prove two technical lemmas on the  $\mathcal{K}_{jk}$ and the vorticity maximal function (defined below).  

%
%
%
\begin{lemma} \label{EvansMullerEstimates}
The matrix functions $\mathcal{K}_{jk}(n, y, z, \eta)$ defined in \eqref{matrixkernal} satisfy the following estimates 
for $y, z \in \Omega$ and $\eta \in C^\infty_0(\Omega)$:
\begin{align}
\label{N1N1N2N2}
&\LV \LC \mathcal{K}_{11} -  \mathcal{K}_{22} \RC\LC n, y, z, \eta \RC \RV  \leq C \\
\label{N1N2}
&\LV \mathcal{K}_{12} \LC n, y, z, \eta \RC   \RV  \leq C \\
\label{N1N1N2N2Log}
&\LV \mathcal{K}_{11} \LC n, y, z, \eta \RC \RV  + \LV \mathcal{K}_{22} \LC n, y, z, \eta \RC \RV \leq 2 \log |y - z| + C
\end{align}
where $C$ depends only on $\eta$, $\varphi_\star$, and $\Omega$.   Finally, we have the bound
\begin{equation} \label{Hboundcompact}
\LV \nabla^k_x H_n(x, y) \RV \leq {C \over \dist ( y, \p \Omega )^k }
\end{equation}
where $C$ depends on $k$, $\varphi_\star$, and $\Omega$.
\end{lemma}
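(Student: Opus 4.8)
The plan is to split the Neumann function into its logarithmic singularity and a harmonic remainder, $N_n(x,y)=\log|x-y|+H_n(x,y)$ as in Lemma~\ref{SandierSoretEstimates}(2), and to treat the pointwise bound \eqref{Hboundcompact} and the kernel bounds \eqref{N1N1N2N2}--\eqref{N1N1N2N2Log} by entirely different arguments; throughout it is convenient to note that $N_n-N_\infty$ is harmonic in $x$ with Neumann datum $\frac 1n\p_\tau\varphi_\star$ \emph{independent of $y$}, so $N_n(x,y)=N_\infty(x,y)+\frac 1n R(x)$ with $R\in C^{1,\alpha}(\overline\Omega)$ fixed, and every estimate may be checked for $N_\infty$ up to $O(1/n)$ terms. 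For \eqref{Hboundcompact} I would use Sandier--Soret's explicit construction recalled around \eqref{formTrivialNeumannFunction}--\eqref{formNeumannFunction}: $H_n(x,y)=\widehat{H}_{\widehat{f_n}}(w(x),w(y))$ with $w:\Omega\to B_1$ the Riemann map and $\widehat{H}_{\widehat{f_n}}(\xi,\zeta)=\log|1-\xi\bar\zeta|+P(\xi)+Q(\zeta)$, where $P,Q$ are harmonic in $B_1$ with $C^k(\overline{B_1})$-norms bounded uniformly in $n$ by the boundary data $\p_\tau\theta$ and $\p_\tau\varphi_\star$ (smooth since $0\notin\p\Omega$ and $\varphi_\star\in C^2$, by Schauder estimates). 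Differentiating $\log(1-\xi\bar\zeta)$ and using $|1-\xi\bar\zeta|\ge 1-|\zeta|$ gives $\bigl|\nabla_\xi^k\log|1-\xi\bar\zeta|\bigr|\le C_k(1-|\zeta|)^{-k}$; since $\p\Omega$ is smooth, $w$ extends to a diffeomorphism of $\overline\Omega$ onto $\overline{B_1}$ with $|w'|$ bounded above and below, so $1-|w(y)|\asymp\dist(y,\p\Omega)$, and the chain rule together with the bounds on $P$ yields \eqref{Hboundcompact}.

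For the kernel estimates I would write $\p_{x_j}N_n(x,y)=\frac{(x-y)_j}{|x-y|^2}+r^n_j(x,y)$ with $r^n_j:=\p_{x_j}H_n$. The decisive point is that $H_n(\cdot,y)$ is harmonic in $\Omega$ and, by Lemma~\ref{BehaviorNearBoundaryNandH}, bounded on a fixed neighbourhood of $\operatorname{supp}\eta\Subset\Omega$ by a constant depending only on $\eta,\varphi_\star,\Omega$, uniformly in $y\in\Omega$ and $n$; hence interior gradient estimates give $\sup_{x\in\operatorname{supp}\eta}|r^n_j(x,y)|\le C(\eta)$ uniformly in $y,n$. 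Expanding the product inside $\mathcal K_{jk}$, the three terms containing a factor $r^n$ each carry at most one of the integrable singularities $|x-y|^{-1},|x-z|^{-1}$ and are therefore bounded by $C(\eta)$, so everything reduces to the principal term
\[
S_{jk}(y,z):=\int_\Omega\eta(x)\,\frac{(x-y)_j}{|x-y|^2}\,\frac{(x-z)_k}{|x-z|^2}\,dx ,
\]
for which I must prove $|S_{11}-S_{22}|+|S_{12}|\le C$ and $|S_{11}|+|S_{22}|\le C(1+|\log|y-z||)$.

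The principal term is handled in complex notation. With $w=(x_1-y_1)+i(x_2-y_2)$ one has $\bigl(\tfrac{(x-y)_1}{|x-y|^2},\tfrac{(x-y)_2}{|x-y|^2}\bigr)=(\operatorname{Re}\tfrac1{\bar w},\operatorname{Im}\tfrac1{\bar w})$, and likewise for $\zeta=(x_1-z_1)+i(x_2-z_2)$, so that elementary bilinear identities give $S_{11}-S_{22}=\operatorname{Re}\int\frac{\eta}{\overline{(x-y)}\,\overline{(x-z)}}\,dx$, $\ S_{11}+S_{22}=\operatorname{Re}\int\frac{\eta}{\overline{(x-y)}\,(x-z)}\,dx$, and $S_{12}=\frac12\operatorname{Im}\int\frac{\eta}{\overline{(x-y)}\,\overline{(x-z)}}\,dx+\frac12\operatorname{Im}\int\frac{\eta}{(x-y)\,\overline{(x-z)}}\,dx$. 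For the holomorphic--holomorphic integral, partial fractions give $\int\frac{\eta}{(x-y)(x-z)}\,dx=\frac{\Phi(y)-\Phi(z)}{y-z}$, where $\Phi(\xi)=\int_\C\frac{\eta(x)}{x-\xi}\,dx$ is the Cauchy transform of $\eta$; since $\p_{\bar\xi}\Phi=-\pi\eta\in C^\infty_c$ and $\p_\xi\Phi$ is, up to a constant, the Beurling transform of $\eta$, which maps $C^\alpha$ into $C^\alpha$, we get $\Phi\in C^{1,\alpha}_{\mathrm{loc}}$ and hence $\bigl|\tfrac{\Phi(y)-\Phi(z)}{y-z}\bigr|\le\|\nabla\Phi\|_{L^\infty}\le C(\eta)$; this controls $S_{11}-S_{22}$ and the first half of $S_{12}$. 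For the mixed integral, $\frac1{\overline{x-z}}=2\p_{\bar x}\log|x-z|$, so integration by parts (using $\p_{\bar x}\frac1{x-y}=\pi\delta_y$) gives $\int\frac{\eta(x)}{(x-y)\overline{(x-z)}}\,dx=-2\pi\,\eta(y)\log|y-z|+\widetilde C(y,z)$, where $\widetilde C(y,z)=-2\int\frac{\log|x-z|}{x-y}\p_{\bar x}\eta\,dx$ is bounded by $C(\eta)$ uniformly in $y,z$ (a fixed test function integrated against two integrable singularities). The leading term $-2\pi\eta(y)\log|y-z|$ is \emph{real}, hence cancels out of the imaginary part in $S_{12}$, leaving $S_{12}$ bounded, while its conjugate survives inside the real part of $S_{11}+S_{22}$ and supplies precisely the logarithm of \eqref{N1N1N2N2Log}. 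Together with $|\mathcal K_{jj}|\le\frac12|\mathcal K_{11}+\mathcal K_{22}|+\frac12|\mathcal K_{11}-\mathcal K_{22}|$ this yields \eqref{N1N1N2N2}--\eqref{N1N1N2N2Log}.

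The main obstacle is exactly this cancellation. A direct real-variable estimate of the principal term via $\int_{\operatorname{supp}\eta}\frac{dx}{|x-y||x-z|}\le C(1+|\log|y-z||)$ gives the same logarithmic bound for \emph{every} component and therefore misses \eqref{N1N1N2N2} and \eqref{N1N2}; one genuinely has to organise the computation so that the divergent part of the principal integral appears as a real multiple of $\log|y-z|$, which then drops out of $\mathcal K_{12}$ and $\mathcal K_{11}-\mathcal K_{22}$. A secondary technical point is keeping all constants uniform in $n$ and in $y,z$ up to $\p\Omega$, which is why the reduction to a genuinely bounded "regular part" $r^n_j$ on $\operatorname{supp}\eta$ — rather than the a priori $\dist(y,\p\Omega)^{-1}$-type bound of \eqref{Hboundcompact} — is carried out at the start.
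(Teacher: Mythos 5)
Your proof is correct, and it shares the paper's overall skeleton — split $N_n(x,y)=\log|x-y|+H_n(x,y)$, observe that $\nabla_x H_n$ is bounded on a neighbourhood of $\operatorname{supp}\eta$ uniformly in $y$ and $n$ so that all cross terms carry at most one integrable singularity, and reduce everything to the principal kernel $S_{jk}$ — but it diverges at the decisive step. The paper disposes of the principal term (its $I_1$) by citing "the explicit estimates in the proof of Theorem 1.1 of Evans--M\"uller" and asserts that \eqref{N1N1N2N2} and \eqref{N1N1N2N2Log} follow by "similar adjustments"; it gives no argument at all for \eqref{Hboundcompact}. You instead make both of these self-contained: for \eqref{Hboundcompact} you exploit the Sandier--Soret representation $H_n=\widehat{H}_{\widehat{f_n}}(w(x),w(y))$ with $\widehat H=\log|1-\xi\bar\zeta|+P+Q$ and the elementary inequality $|1-\xi\bar\zeta|\ge 1-|\zeta|$ together with $1-|w(y)|\asymp\dist(y,\p\Omega)$, and for the kernel you exhibit the cancellation explicitly in complex notation — partial fractions plus the Lipschitz bound on the Cauchy transform of $\eta$ for the holomorphic--holomorphic piece, and a $\p_{\bar x}$ integration by parts producing the \emph{real} leading term $-2\pi\eta(y)\log|y-z|$ for the mixed piece, which therefore drops out of $\operatorname{Im}$ (hence out of $\mathcal K_{12}$ and $\mathcal K_{11}-\mathcal K_{22}$) and survives only in $\mathcal K_{11}+\mathcal K_{22}$. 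This is exactly the compensated structure the Evans--M\"uller citation is standing in for, and your version has the advantage of being checkable line by line and of correctly isolating \emph{why} the antidiagonal and trace-free components are bounded while the diagonal ones are only logarithmic. Two cosmetic points: your constant in front of the logarithm comes out as $2\pi\|\eta\|_{L^\infty}$ rather than the literal $2$ of \eqref{N1N1N2N2Log} (the statement is in any case only used up to $\eta$-dependent constants), and the harmonicity of $H_n(\cdot,y)$ across $y$ presupposes the normalization $\Delta N_n=2\pi\delta_y$ implicit in Lemma~\ref{SandierSoretEstimates}(2) rather than the $\Delta N_n=\delta_y$ written in \eqref{neumannfunctionwithn}; that inconsistency is the paper's, not yours, and affects nothing.
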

\begin{proof}
These estimates are similar to ones found in Delort \cite{Delort} and Evans-M\"uller \cite{EvansMuller} for the associated Green's function on $\R^2$;
therefore, we only sketch the proof of \eqref{N1N2} following the argument of \cite{EvansMuller}.  
The proofs of \eqref{N1N1N2N2} and \eqref{N1N1N2N2Log} can be established by similar adjustments of arguments in \cite{EvansMuller}.

To prove \eqref{N1N2} one needs to examine  the behavior of the gradient of $H_n(x,p) = N_n(x,p) - \log |x-p|$ defined via \eqref{formTrivialNeumannFunction} and \eqref{formNeumannFunction}.   Since the test function $\eta$ has compact support away from the boundary, it follows that $\p_{x_j} H_n(x,\cdot)$  is bounded for all $x$ on the support of $\eta$ (as are higher derivatives of $H_n(x,\cdot)$), as in the proof of Lemma~\ref{SandierSoretEstimates}.   
We can now write 
\begin{align*}
\LV \mathcal{K}_{12} \RV 
&  = \LV \int \eta(x) \LB { (x - y)_1 \over |x - y|^2 } + \p_{x_1} H_n(x,y) \RB \LB { (x - z)_2 \over |x - z|^2 } + \p_{x_2} H_n(x,z) \RB dx \RV \\
& \leq  \LV \int \eta(x) \LB { (x - y)_1 \over |x - y|^2 } { (x - z)_2 \over |x - z|^2 } \RB \RV 
+ \LV \int \eta(x) \LB { (x - y)_1 \over |x - y|^2 } \p_{x_2} H_n(x,z) \RB \RV\\
& \quad + \LV \int \eta(x) \LB \p_{x_1} H_n(x,z) { (x - y)_2 \over |x - y|^2 } \RB \RV
+ \LV \int \eta(x) \LB \p_{x_1} H_n(x,z) \p_{x_2} H_n(x,z) \RB \RV \\
& = I_1 + I_2 + I_3 + I_4
\end{align*}
Using the support of $\eta$ and the explicit estimates in the proof of Theorem~1.1 of \cite{EvansMuller}, it follows that $I_1 \leq C$.  $I_4 \leq C$ due to the uniform bounds on $\nabla_x H_n(x,\cdot)$ for $x$ having compact support away from the boundary, where $C$ depends on the distance of the support to the boundary.  Finally, we consider the the bound on $I_2$ and $I_3$, which can be handled by identical bounds. 
Due to the uniform bound on $\nabla_x H_n(x,\cdot)$ away from the boundary, we have
\begin{align*}
I_2 & = \LV \int \eta(x) \LB { (x - y)_1 \over |x - y|^2 } \p_{x_2} H_n(x,z) \RB \RV  \lesssim \int_{\operatorname{supp}(\eta)} {1\over |x - y| } dx  \\
& \lesssim \int_0^{\operatorname{diam}(\Omega)} dr \lesssim 1.
\end{align*}
Combining the estimates  yields \eqref{N1N2}. 
\end{proof}

Define for any Radon measure $\mu$ the maximal vorticity function $M_r(\mu)$ of DiPerna-Majda
\cite{DiPernaMajda}  
\[
M_r(\mu) =\sup _{x \in \Omega, 0 < t \leq T} \int_{B_r(x) \cap \Omega} | \mu (y,t)| d y
\]
for $0  < r \leq {1\over2}$.  As in \cite{LiuXinCPAM, LiuXin, LinZhang} we prove a decay estimate on $M_r(\omega_n)$ below in order to pass to the limit in the main term $A_n$.  

\begin{lemma} \label{maximalvorticityestimate} Suppose $\{a_j(t)\}_{j=1}^n$ arise from the hypotheses of Proposition~\ref{propODELimit}, then we can  bound
$$
M_r(\omega_n(t))  \lesssim { 1\over \sqrt{|\log r|}} + {1 \over \sqrt{n}} 
$$
for all $n$ and all $r \leq 1$.  Furthermore,
\[
M_r(\omega) \lesssim{1 \over \sqrt{| \log r |}}.
\]
\end{lemma}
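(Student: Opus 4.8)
The plan is to run the Schochet--Liu-Xin--Lin-Zhang argument (as in \cite{Schochet,LiuXin,LinZhang}) with the free-space logarithm replaced by the Neumann function $N_n$, using the dissipation of the renormalized energy along \eqref{ODE} in place of conservation of the Kirchhoff--Onsager Hamiltonian, and then passing to the weak-$*$ limit. Since $\omega_n(\cdot,t)=\frac1n\sum_j\delta_{a_j(t)}$, we have $\int_{B_r(x)\cap\Omega}|\omega_n(\cdot,t)| = \frac{m}{n}$, where $m=m(x,r,t)$ is the number of indices $j$ with $a_j(t)\in B_r(x)$; so it suffices to show $m\lesssim n\big(\tfrac1{\sqrt{|\log r|}}+\tfrac1{\sqrt n}\big)$ uniformly in $x\in\Omega$ and $t$, and then the bound for $\omega$ follows by lower semicontinuity.

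\emph{Step 1: a time-uniform bound on $-\sum_{j\ne k}N_n(a_j(t),a_k(t))$.} The dissipation identity \eqref{ODEenergyidentity} gives $W(a(t))\le W(a(0))$. Using the representation \eqref{alternativedefrenormalizedenergy}, $W(b)=-\pi\sum_{j\ne k}N_n(b_j,b_k)-\pi\sum_j H_n(b_j,b_j)$, I would rearrange this into
\[
-\sum_{j\ne k}N_n(a_j(t),a_k(t)) = \tfrac1\pi W(a(t)) + \sum_j H_n(a_j(t),a_j(t)) \ \le\ \tfrac1\pi W(a(0)) + \sum_j H_n(a_j(t),a_j(t)).
\]
By Lemma~\ref{BehaviorNearBoundaryNandH} the self-interaction terms $H_n(a_j(t),a_j(t))$ are bounded above by $C$, so $\sum_jH_n(a_j(t),a_j(t))\lesssim n$; and $\frac1\pi W(a(0)) = -\sum_{j\ne k}N_n(a_j(0),a_k(0)) - \sum_jH_n(a_j(0),a_j(0))\lesssim n^2$ by the hypothesis of Proposition~\ref{propODELimit} together with $\sum_j|H_n(a_j(0),a_j(0))|\lesssim n$ (the initial vortices are not super-exponentially close to $\partial\Omega$). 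Hence $-\sum_{j\ne k}N_n(a_j(t),a_k(t))\lesssim n^2$ for all $t$, the diagonal contributions being negligible against $n^2$.

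\emph{Step 2: localization to the ball, and the boundary input.} Writing $N_n(x,y)=\log|x-y|+H_n(x,y)$ and using that $H_n$ is bounded above on $\Omega\times\Omega$ (Lemma~\ref{BehaviorNearBoundaryNandH}; near $\partial\Omega$ the regular part captures an extra $\log$-repulsion, so it can only help), one has $-N_n(x,y)\ge\log\frac1{|x-y|}-C$ for all $x,y\in\Omega$ --- this is precisely where the Dirichlet Neumann-function estimates of Section~\ref{DirichletBounds} enter, guaranteeing that proximity to the boundary does not weaken the lower bound. Now fix $x_0$, $r\le\frac12$, set $S=\{j:a_j(t)\in B_r(x_0)\}$, $m=|S|$, and split $\sum_{j\ne k}(-N_n(a_j,a_k))$ into the part over $j,k\in S$ and the rest. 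The rest is $\ge -Cn^2$ by $N_n\le C$, and each term with $j,k\in S$, $j\ne k$ is $\ge\log\frac1{2r}-C$ since $|a_j-a_k|\le 2r$; if $2r\ge1$ then $|\log r|\lesssim1$ and $\frac mn\le1$ makes the claim trivial, so assume $2r<1$. Combining with Step~1, $m(m-1)\big(\log\frac1{2r}-C\big)\lesssim n^2$, hence $\frac{m^2}{n^2}\lesssim\frac1{|\log r|}+\frac1n$ (using $m\le n$ for the linear term), i.e. $M_r(\omega_n(t))\lesssim\frac1{\sqrt{|\log r|}}+\frac1{\sqrt n}$. Finally, since $\omega_n(t)\rightharpoonup\omega(t)$ in $\mathcal M$ (Proposition~\ref{propODELimit}) and $B_r(x)$ is open, $\omega(B_r(x),t)\le\liminf_n\omega_n(B_r(x),t)\le\liminf_n M_r(\omega_n)\lesssim\frac1{\sqrt{|\log r|}}$, and taking the supremum over $x$ and $t$ gives $M_r(\omega)\lesssim\frac1{\sqrt{|\log r|}}$.

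The main obstacle is Step~1: the dissipation of $W$ controls only the full functional, so one must be sure the diagonal self-interactions do not contaminate the bound. At time $t$ this is automatic from $H_n(x,x)\le C$, but at time $0$ it genuinely uses that the initial vortices stay at least $e^{-Cn}$ away from $\partial\Omega$ (which holds in the application, where $\operatorname{supp}\omega_0$ is a fixed positive distance from $\partial\Omega$). Once that is in hand, Step~2 is exactly the planar Liu-Xin/Lin-Zhang computation, its only new ingredient being the near-boundary lower bound $-N_n(x,y)\ge\log\frac1{|x-y|}-C$ supplied by Section~\ref{DirichletBounds}.
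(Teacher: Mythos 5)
Your proposal is correct and follows essentially the same route as the paper: the counting of pairs inside $B_r(x)$ against the lower bound $-N_n(a_j,a_k)\ge \log\frac{1}{|a_j-a_k|}-C$, the time-uniform bound $-\sum_{j\ne k}N_n(a_j(t),a_k(t))\lesssim n^2$ obtained from $W(a(t))\le W(a(0))$ together with the representation \eqref{alternativedefrenormalizedenergy} and the upper bounds of Lemma~\ref{BehaviorNearBoundaryNandH}, and lower semicontinuity for the limit measure. Your Step~1 merely spells out the self-interaction bookkeeping at $t=0$ that the paper leaves implicit in its citation of Lemmas~\ref{SandierSoretEstimates} and \ref{BehaviorNearBoundaryNandH}.
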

\begin{proof}
Following the structure of the argument in \cite{LiuXin} we 
have for some positive integer $k_x \leq n$,
\begin{align*}
|\log r| M^2_r(\omega_n(t)) & = | \log r| \LB {1\over n} \# \{ a_j (t) \in B_r(x) \cap \Omega \} \RB^2  \\
& =  | \log r| {k_x(k_x  - 1) \over n^2 } + | \log r| {k_x \over n^2} \\
& \lesssim \LB  {1\over n^2} \sum_{|a_j - a_k| \leq r} \LB -N_n(a_j,a_k) + C \RB \RB + { |\log r| \over n} M_r(\omega_n)  \\
& \lesssim 1 + { | \log r | \over n } M_r(\omega_n)
\end{align*}
where we used Lemma~\ref{SandierSoretEstimates}
and $\sum_{j \neq k} 1 \leq n^2$.
 Since $M_r(\omega_n) \leq 1$  the bound follows.

For the bound on $M_r(\omega)$ we have for $\chi \in C^\infty$ where $\chi = 1$ on $B_r(x)$ and $\chi = 0$ on $\R^2 \backslash B_{2r}(x)$, $x$ is chosen where $\int_{B_r(x) \cap \Omega} \omega(t)$  is maximal, then
\[
M_r(\omega) \leq \int \chi \omega  = \lim_{n \to \infty} \int \chi \omega_n \leq \lim_{n\to \infty} M_{2r}(\omega_n) \lesssim {1\over \sqrt{ |\log r|}}.
\]
\end{proof}

\begin{proof}[Proof of Proposition~\ref{propODELimit}]

We now examine the convergence behavior of $A^j_n$ and $B_n$.  From Lemma~\ref{EvansMullerEstimates} one can follow the arguments of \cite{Schochet, LiuXin} to establish the convergence of $A^j_n$.  Looking at $A_n^1$ and taking $\chi \in C_0^\infty(\Omega)$ and setting $\eta = \LC \p_{x_1}^2  - \p_{x_2}^2  \RC\chi$, we have
\begin{align*}
A^1_n & = - \frac1\pi\int \int  \int_{\{|y -  z| \geq r\} \cap \Omega} \LC \mathcal{K}_{11} - \mathcal{K}_{22} \RC\LC n, y, z, \eta \RC  \omega_n(y) \omega_n(z) dy dz dx  \\
& \quad - \frac1\pi\int \int \int_{\{0< |y -  z| < r\} \cap \Omega} \LC \mathcal{K}_{11} - \mathcal{K}_{22} \RC\LC n, y, z, \eta \RC  \omega_n(y) \omega_n(z) dy dz dx  .
\end{align*}
Since $\LC \mathcal{K}_{11} - \mathcal{K}_{22} \RC\LC n, y, z, \eta \RC$ is continuous in each variable and bounded in the first
 region then that term converges to 
\[
 - \frac1\pi\int \int \int_{\{|y -  z| \geq r\} \cap \Omega} \LC \mathcal{K}_{11} - \mathcal{K}_{22} \RC\LC \infty, y, z, \eta \RC  \omega(y) \omega(z) dy dz dx.
\]
On the other hand in the second region we have 
\begin{align*}
& \LV \frac1\pi\int \int \int_{\{0<|y -  z| < r\} \cap \Omega} \LC \mathcal{K}_{11} - \mathcal{K}_{22} \RC\LC n, y, z, \eta \RC  \omega(y) \omega(z) dy dz dx \RV \\
 & \leq C\int \int_{\{0< |y -  z| < r\} \cap \Omega} \omega_n(y) \omega_n(z) dy dz  \\
 & \lesssim \LN \omega_n \RN_{\mathcal{M}(\Omega)} \int_{\{|z| < r\} \cap \Omega} \omega_n (z) dz  \\
 & \lesssim M_r(\omega_n)
\end{align*}

and by Lemma~\ref{maximalvorticityestimate} the term goes to zero as $n\to \infty$ and $r\to 0$.  This implies $A_n^1 \to A^1$.  The convergence of $A_n^2$ is much easier since the kernal is continuous on the entire domain.  

Next, we show that $B_n \to 0$, and here we crucially use the compact support of the our test function $\chi$.   $B_n$ consists of three terms, depending on where the derivatives hit.  We consider the worst case in which all derivatives hit $H_n$.  Using \eqref{Hboundcompact} we get 
\begin{align*}
& {1\over n \pi} \LV \int \int \p_\ell \chi \p_m \p_m \p_\ell H_n(x,y) N_n(x,y) \omega_n(y) dy dx \RV \\
& \lesssim {1\over n} \LN \omega_n \RN_{\mathcal{M}}  \to 0
\end{align*}
as $n \to \infty$.  The rest of the terms of $B_n$ are estimated in a similar fashion.  

Finally, we can prove the estimate on the kinetic energy in the fashion of Liu-Xin \cite{LiuXin}.   As in \cite{LiuXin} one can
 use the decay of $M_r(\omega) \to 0$ to prove that 
\begin{equation}\label{limitingomegakineticenergyterm}
-\int\int_{ \{ |y - z| \leq r \} \cap \Omega}  \log|y -z| \omega(y) \omega(z) dy dz \lesssim 1.
\end{equation}
Then for $K$ a compact set in $\Omega$ take  a  nonnegative test function $\chi \in C_0^\infty(\Omega)$ with $\chi = 1$ on $K$.  Then 
\begin{align*}
\int_K v^2 & \leq \int \chi v^2  = 4\int \int \LC \mathcal{K}_{11} +  \mathcal{K}_{22} \RC \omega(y) \omega(z) dy dz \\
& = 4\int\int _{ \{ |y-z| < r \} \cap \Omega} \LC \mathcal{K}_{11} +  \mathcal{K}_{22} \RC \omega(y) \omega(z) dy dz \\
& \quad + 4\int\int _{ \{ |y-z| \geq r \} \cap \Omega} \LC \mathcal{K}_{11} +  \mathcal{K}_{22} \RC \omega(y) \omega(z) dy dz \\
& = A + B.
\end{align*}
Since $B$ is away from the singularity, then we see immediately that $B$ is bounded.  The bound on $A$ follows  from \eqref{N1N1N2N2Log} and 
\eqref{limitingomegakineticenergyterm}.


\end{proof}

%

We are now in position to establish the hydrodynamic limit.  The primary task is to approximate the initial data in a suitable way by quantized vortices that satisfy a good energy bound.  Then we can use Proposition~\ref{propODELimit}.

\begin{proof}[Proof of Theorem~\ref{thm:hydrodynamic}]
We first approximate initial data for $0 \leq \omega_0 \in \mathcal{M}\cap \dot{H}^{-1}(\Omega)$ in a suitable way so that we can use both Theorem~\ref{thm:quantdynamics} and Proposition~\ref{propODELimit}.   

1. Assume 
$\operatorname{supp} \omega_0 \subset \widetilde{\Omega}$ 
with $\dist(\widetilde{\Omega}, \p \Omega)\geq C  > 0$.  We then cover our set $\Omega$ with nonoverlapping squares $\{Q_j\}$, where
\[
{Q_j} \equiv j'\hbox{th square of side-length } h ,
\]
so there exist $O(h^{-2})$ squares $Q_j$ that cover $\Omega$.   We then set
\begin{equation} \label{hrate}
h = n^{-{1\over4}}
\end{equation}
so $h^{-2} \ll n$.
We now define
\[
\omega_{0}^{n} = \sum_{Q_j} {\omega_{0,j}^{n}}
\]
where  the $\omega_{0,j}^{n}$ are set below.  
Next, set
\[
\widetilde{n}^h_j = { n \over 2 \pi}   \int_{Q_j} \omega_0 
\]
and  ${n_{j}} = \lfloor \widetilde{n}^h_j \rfloor $, then $| {n_{j}} - \widetilde{n}^h_j | < 1$ and 
\begin{equation} \label{errorbounddegreesquares}
\LV \sum_j {n_{j}} - n \RV \lesssim h^{-2} = n^{1 \over2} . 
\end{equation}
Since $\omega_0$ has compact support, then for all $h \leq h_0 = h_0(\Omega)$ small enough, if $Q_k \cap \p \Omega \neq \varnothing$ then  $n_j= 0$.  If we set $\widehat{n} = \sum_j n_j$  then 
\[
n - C n^{1\over 2} \leq \widehat{n} \leq n
\]
so $\widehat{n} \to \infty$ in the same rate as $n \to \infty$.  We can then use $\widehat{n}$ instead of $n$ in the discussion below; however, we relabel $\widehat{n}$ as $n$ for simplicity.

Next, we slice $Q_j$ into $n_j$ thin rectangles of equal width.  They will be aligned vertically and horizontally in alternating sequence, see Figure~\ref{alignmentofrectangles}. 
\begin{figure}[htbp] 
   \centering
   \includegraphics[width=3in]{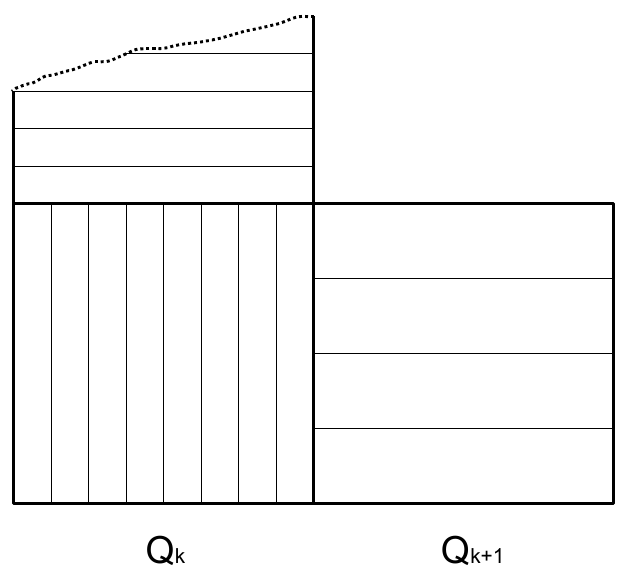} 
   \caption{Construction of the rectangles in the $Q_k$'s}
   \label{alignmentofrectangles}
\end{figure}
 In the center of each of these subrectangles
we label points $\{a_{0,j}^1, \ldots, a_{0,j}^{ {n_{j}}} \}$, so the distance between neighboring points is ${h \over n_j}$.   Finally, we let 
\begin{equation*} \label{subsquareomegan}
\omega_{0,j}^{n} =  {1\over n }\sum_{k=1}^{{n_{j}}} \delta_{a_{0,j}^k}
\end{equation*}
where the $a_{0,j}^k$ are defined above.  In the worst-case scenario all vortices are located in a single cell with intervortex distance $O({h \over n} ) \approx n^{-{5 \over 4 }}$, and we will need to check that this conforms to the correct bound on  $\rho_{a(0)}$.

We claim that $\omega_0^n \to \omega_0$ in $\mathcal{M}(\Omega)$.  Let  $f_U$ denote the average of $f$ on $U$.   Then for $\chi \in C_0^0(\Omega)$, $\LV \int_{\Omega} \chi \LC \omega_0^n - \omega_0 \RC \RV \leq \sum_{Q_j} \LV \int_{Q_j } (\chi - \chi_{Q_j}) (\omega^n_0 - \omega_0) \RV + \sum_{Q_j} \LV   \chi_{Q_j} \RV \LV \int_{Q_j }  \omega_0^n -\omega_0 \RV \to 0$ as $n \to \infty$ from \eqref{hrate}, \eqref{errorbounddegreesquares}, and the continuity of $\chi$.  Therefore, $\omega_n^0 \to \omega_0$ in $\mathcal{M}(\Omega)$.  

2. Finally, we claim that
\begin{equation}  \label{approxenergysequencebound}
-{1\over n^2} \sum_{a_{0,i}^j \neq a_{0,k}^\ell} N_n (a_{0,i}^j,a_{0,k}^\ell) \lesssim 1.
\end{equation}
Since the support of $\omega_0$ lies in a compact set away from the boundary, then $$\min\{ \operatorname{dist}(a_{0,j}^k, \p \Omega) \} \geq C > 0$$ uniformly in $n$.  Hence, we have $| H_n(a_{0,j}^k, a_{0,i}^\ell)| \leq C$ uniformly in $n$.  In particular, 
to establish \eqref{approxenergysequencebound} it is sufficient to prove
\begin{equation*}  \label{approxenergysequenceboundlog}
-{1\over n^2} \sum_{a_{0,i}^j \neq a_{0,k}^\ell} \log |a_{0,i}^j - a_{0,k}^\ell| \lesssim 1.
\end{equation*}
We subdivide the sum into those vortex interactions arising from the same $Q_j$'s and those that arise from differing $Q_k$'s,
\begin{align*}
-{1\over n^2} \sum_{a_{0,i}^j \neq a_{0,k}^\ell} \log |a_{0,i}^j - a_{0,k}^\ell|
& = -{1\over n^2} \sum_j \sum_{k \neq \ell} \log |a_{0,j}^k - a_{0,j}^\ell |
 -{1\over n^2} \sum_{j \neq k} \sum_{i, \ell} \log |a_{0,j}^i - a_{0,k}^\ell | \\
 & = A + B
\end{align*}
We now consider the sum $A$.  Concentrating on a single $Q_j$, assume without loss of generality that the subrectangles are vertical and $a_{0,j}^1$ is located at the origin.  Then the vortices in this square are located along the $x$-axis with $x$ values at $\{0, \Delta, 2\Delta, \ldots, (n_j-1) \Delta \}$, where $\Delta = {h \over n_j}$.   Summing over the log interactions yields
\begin{align*}
- \sum_{k \neq \ell} \log |a_{0,j}^k - a_{0,j}^\ell | & 
= - \LB  (n_j-1) \log | \Delta| + (n_j-2) \log| 2 \Delta| + \cdots +  \log | (n_j-1) \Delta| \RB \\
& \leq {n_j (n_j-1) \over 2} \log \Delta^{-1} \leq {n^2_j \over 2} \log h^{-1} + {n_j^2 \over 2} \log n \\
& \leq 3 n_j^2 \log h^{-1},
\end{align*}
since $\log h^{-1} = {1\over 4} \log n$.  Now summing over the $j$'s yields and using that ${n_j\over n} \leq {1\over 2\pi}\int_{Q_j} \omega_0$, we get 
\begin{align*}
A & = - {1\over n^2} \sum_j \sum_{k \neq \ell} \log |a_{0,j}^k - a_{0,j}^\ell |  \lesssim {1\over n^2} \sum_j   n_j^2 \log h^{-1} \\
& \lesssim \sum_j \log h^{-1} \int_{Q_j} \omega_0(y) \int_{Q_j} \omega_0(z) \\
& \lesssim - \sum_j\int_{Q_j} \int_{Q_j} \omega_0(y) \log |y - z| \omega_0(z) dy dz
\end{align*}

Next we bound $B$.  Let $p_j$ denote the center of the square $Q_j$.   Due to the alternating alignment of the subrectangles in Figure~\ref{alignmentofrectangles}, we see that $ \LV \log | a_{0,j}^i - a_{0,k}^\ell | -  \log | p_j - p_k| \RV \leq C$, even for neighboring squares.  Therefore,
\begin{align*}
 B& = -{1\over n^2} \sum_{j \neq k} \sum_{i, \ell} \log |a_{0,j}^i - a_{0,k}^\ell |\\
 & =  -{1\over n^2} \sum_{j \neq k}  \sum_{i = 1}^{n_j} \sum_{\ell = 1}^{n_k} \log |a_{0,j}^i - a_{0,k}^\ell | \\
 & \leq  C - {C\over n^2} \sum_{j \neq k}  \sum_{i = 1}^{n_j} \sum_{\ell = 1}^{n_k} \log |p_i - p_k|  
  = C  - {C\over n^2} \sum_{j \neq k}  {n_j} {n_k} \log |p_i - p_k|    \\
 & \lesssim 1 - \sum_{j\neq k} \log |p_i - p_k|   \int_{Q_j} \omega_0(y)\int_{Q_k} \omega_0(z) \\
 & \lesssim 1 - \sum_{j\neq k} \int_{Q_j} \int_{Q_k} \omega_0(y) \log |y - z| \omega_0(z) dy dz
\end{align*}
Combining $A$ and $B$ together we find
\begin{align*}
A + B & \lesssim 1 -  \int_{\Omega} \int_\Omega \omega_0(y) \log |y - z| \omega_0(z) dy dz \\
& \lesssim 1 +   \LN \omega_0 \RN_{\dot{H}^{-1}(\Omega)} \LN \mu_{\operatorname{supp}(\omega_0)}(y)  \int \log|y-z|  \omega_0 (z)\RN_{H^1(\Omega)} \\
& \lesssim 1 + \LN \omega_0 \RN^2_{\dot{H}^{-1}} \lesssim 1,
\end{align*}
where $\mu_Q$ is the characteristic function on $Q$.


3.  Now we complete  the proof of the hydrodynamic limit.  
Set $\e_n$ such that $n =  |\log | \log | \log \e_n| | |^{1 \over 4} $ and 
\[
\tilde\omega_n (t) = { 1\over n} {e_{\e_n}(u_{\e_n}(t)) \over \pi | \log \e_n|}.
\]
Given the initial measure $\omega_0$, we build our initial data $u_{\e_n}(0)$ with vortices at $\{a_{0,j}^k\}$ as generated above, and 
satisfying the hypotheses of Theorem~\ref{thm:quantdynamics}.  Such data can be constructed following Lemma 14 of \cite{JSp2}.
Then since the energy is decreasing in time and using 
\eqref{Energytotalbound}, we obtain for a subsequence that
 $\tilde\omega_n\to\tilde\omega$
in $\mathcal{M}(\Omega\times[0,\infty))$.
Furthermore, the intervortex distance is no worse than
$$\rho_{a(0)} \geq C {h \over n} \geq C n^{- {3 \over2}} \geq C \LV \log \LV \log \LV \log \e_n \RV \RV \RV^{-{3 \over 8}}
\geq \LV \log \LV \log \LV \log \e_n \RV \RV \RV^{- {1\over 3}};
$$
therefore, both $n$ and $\rho_{a(0)}$  satisfy the requirements of Proposition~\ref{lowerboundrhostardirichlet}.  

From Proposition~\ref{propODELimit} we obtain that 
$\omega_n=\frac1n\sum \delta_{a_j(t)}$ converges 
to some $\omega$ that is an interior 
weak solution of \eqref{meanfieldequation}.
By Theorem~\ref{thm:quantdynamics}, we see that
$\omega_n-\tilde \omega_n\to 0$ in distribution,
and so $\tilde\omega=\omega$ also solves \eqref{meanfieldequation}.


\end{proof}

\begin{proof}[Proof of Theorem~\ref{lowerboundrhostardirichletconverse}] 

The proof of Theorem~\ref{lowerboundrhostardirichletconverse} follows along the same lines as the proof of 
Theorem~\ref{thm:hydrodynamic}.  In particular we use assumptions \eqref{hyp1}-\eqref{hyp4} in order to satisfy the hypotheses 
of Theorem~\ref{thm:quantdynamics}.  Next, assumptions \eqref{hyp3}-\eqref{hyp4} ensure the long-time existence of the vortex dynamics via Proposition~\ref{lowerboundrhostardirichlet}.  Finally, assumption \eqref{hyp5} allows us to use the ODE to PDE result, Proposition~\ref{propODELimit}.  The proof follows.

%
%

\end{proof}


\begin{thebibliography}{99}




\bibitem{AS}
{\sc Ambrosio, L.,  Serfaty, S.}
\newblock A gradient flow approach to an evolution problem arising in
  superconductivity.
\newblock {\em Comm. Pure Appl. Math. 61}, 11 (2008), 1495--1539.

\bibitem{AMS} {\sc Ambrosio, L., Mainini, E.,  Serfaty, S.}
Gradient flow of the Chapman-Rubinstein-Schatzman model for signed vortices.
{\em Ann. IHP, Analyse nonlin\'eaire. 28}, 2 (2011), 217--246.

\bibitem{BBH} 
\textsc{Bethuel, F., Brezis, H., H\'elein, F.}: \emph{Ginzburg-Landau Vortices}. Progress in Nonlinear Differential Equations and their Applications, 13. Birkh\"auser, Boston, 1994

\bibitem{BOS} 
\textsc{Bethuel, F., Orlandi, G., Smets, D.}: Collisions and phase-vortex interactions in dissipative Ginzburg-Landau dynamics. \emph{Duke Math. J.} \textbf{130}, 523--614 (2005)

\bibitem{BCL}
\textsc{Brezis, H., Coron, J.-M., Lieb, E.}:
{Harmonic maps with defects}. \emph{Comm. Math. Phys.} \textbf{107}, 649-705 (1986)



\bibitem{Chapman}
\textsc {Chapman, S. J.}
\emph{A Hierarchy of Models for Type-II Superconductors}
SIAM Review \textbf{42}~(2000), 555--598.


\bibitem{CRS}
\textsc {Chapman, S. J., Rubinstein, J., Schatzman, M.}
\emph{A mean-field model of superconducting vortices.}
European J. Appl. Math. \textbf{7}~(1996), 97--111. 



\bibitem{CJ} 
\textsc{Colliander, J.E., Jerrard, R.L.}: Ginzburg-Landau vortices: weak stability and 
Schr\"odinger equation dynamics. \emph{J. Anal. Math.} \textbf{77}, 129--205 (1999)



\bibitem{Delort}
\textsc{Delort, J.-M.,} Existence de nappes de tourbillon en dimension deux, \emph{J. Amer. Math. Soc.}, \textbf{4} (1991), 553--586.

\bibitem{DiPernaMajda}
\textsc{DiPerna, R. J.,; Majda, A. J.}, \emph{Concentrations in regularizations for 2-D incompressible flow}, Comm. Pure Appl. Math. \textbf{40}, (1987), 301--345.

\bibitem{Dorsey}
{\sc Dorsey, A.~T.}
\newblock Vortex motion and the Hall effect in type-II superconductors: A
  time-dependent Ginzburg-Landau theory approach.
\newblock {\em Phys. Rev. B 46}, 13 (Oct 1992), 8376--8392.



\bibitem{EVortex} 
\textsc{E, W.}: Dynamics of vortices in Ginzburg-Landau theories with applications to superconductivity. \emph{Phys. D} \textbf{77}, 383--404 (1994)


\bibitem{ELiquid}  
\textsc{E, W.} \emph{Dynamics of vortex liquids in Ginzburg-Landau theories with applications to superconductivity},
Phys. Rev. B \textbf{50}, (1994), 1126--1135.


\bibitem{Essmann}
\textsc{Essmann, U.; Tr\"auble, H.} The flux-line arrangement in the "intermediate state" of type II superconductors" \emph{Physics Letters} \textbf{27A}~(1968), 156--157.

\bibitem{Evans}
\textsc{Evans, L. C.} \emph{Partial differential equations}. Graduate Studies in Mathematics, 19. American Mathematical Society, Providence, RI, 1998. 


\bibitem{EvansMuller}
\textsc{Evans, L. C.; M\"uller, S.} \emph{Hardy spaces and the two-dimensional Euler equations with nonnegative vorticity.}  J. Amer. Math. Soc. 7 (1994), no. 1, 199--219.

\bibitem{LNX}
\textsc{Lopes Filho, M. C., Nussenzveig Lopes, H. J., Xin, Z.} \emph{
Existence of vortex sheets with reflection symmetry in two space dimensions.} 
Arch. Ration. Mech. Anal. 158 (2001), no. 3, 235--257. 






\bibitem{Jwave} 
\textsc{Jerrard, R.L.}: Vortex dynamics for the Ginzburg--Landau wave equation. \emph{Calc. Var. Partial Differential Equations} \textbf{9}, 683--688 (1999)

\bibitem{JerrardSIMA} \textsc{Jerrard, R. L.} \emph{Lower bounds for generalized Ginzburg-Landau functionals}, 
SIAM J. Math. Anal. \textbf{30}~(1999), 721--746.  

\bibitem{JerrardSoner} 
\textsc{Jerrard, R.L., Soner, H.M.}: The Jacobian and the Ginzburg-Landau energy. \emph{Calc. Var. Partial Differential Equations} \textbf{14}, 151--191 (2002)

\bibitem{JSparabolic} 
\textsc{Jerrard, R.L., Soner, H.M.}: Dynamics of Ginzburg--Landau vortices. \emph{Arch. Rational Mech. Anal.} \textbf{142}, 99--125 (1998)

\bibitem{JSp} 
\textsc{Jerrard, R.L., Spirn, D.}: Refined Jacobian estimates for Ginzburg-Landau functionals. \emph{Indiana Univ. Math. Jour.} \textbf{56}, 135--186 (2007)

\bibitem{JSp2} 
\textsc{Jerrard, R.L., Spirn, D.}: Refined Jacobian estimates and Gross-Pitaevsky vortex dynamics \emph{Arch. Ration. Mech. Anal.} (2008)

\bibitem{Kopnin}
{\sc Kopnin, N.~B., Ivlev, B.~I., and Kalatsky, V.~A.}
\newblock The flux-flow hall effect in type ii superconductors. an explanation
  of the sign reversal.
\newblock {\em Journal of Low Temperature Physics 90}, 1 (01 1993), 1--13.

\bibitem{KMM} \textsc{Kurzke, M., Melcher, C., Moser, R.}:
Vortex motion for the Landau-Lifshitz-Gilbert equation with spin transfer
torque. \emph{SIAM J. Math. Anal. 43},  1099--1121.

\bibitem{KMMS}
{\sc Kurzke, M., Melcher, C., Moser, R., and Spirn, D.}
\newblock Dynamics for {G}inzburg-{L}andau vortices under a mixed flow.
\newblock {\em Indiana Univ. Math. J. 58}, 6 (2009), 2597--2621.

\bibitem{KS_CPDE}
{\sc Kurzke, M.,  Spirn, D.}
\newblock {$\Gamma$}-stability and vortex motion in type II superconductors.
\newblock {\em Communications in Partial Differential Equations 36}, 2 (2011),
  256 -- 292.

\bibitem{KurzkeSpirnMRI}
\textsc{Kurzke, M., Spirn, D.}: Quantitative equipartition of the Ginzburg-Landau energy with applications. \emph{Indiana U. J. Math. 59}, 6 (2010), 2077--2092.



\bibitem{LinHeat} 
\textsc{Lin, F.-H.}: Some dynamical properties of Ginzburg-Landau vortices. \emph{Comm. Pure Appl. Math.} \textbf{49}, 323--359 (1996)

\bibitem{LinWave}
\textsc{Lin, F.-H.}: Vortex dynamics for the nonlinear wave equation. \emph{Comm. Pure Appl. Math.}\textbf{52}, 737--761 (1999)

\bibitem{LinLin}
\textsc{Lin, F.H.; Lin, T.C.} Minimax solutions of the Ginzburg-Landau equations. \emph{Selecta Math.} \textbf{3}~(1997), 99--113. 


\bibitem{LinZhang}
\textsc{Lin, F. H.; Zhang, P.} \emph{On the hydrodynamic limit of Ginzburg-Landau vortices}. Discrete Contin. Dynamic Systems 6 (2000), no. 1, 121--142.

\bibitem{LinZhang:Wave}
{\sc Lin, F.; Zhang, P.}
\newblock On the hydrodynamic limit of {G}inzburg-{L}andau wave vortices.
\newblock {\em Comm. Pure Appl. Math. 55}, 7 (2002), 831--856.

\bibitem{LiuXin}
\textsc{Liu, J.G.; Xin, Z.} Convergence of the point vortex method for 2-D vortex sheet. \emph{Math. Comp.} \textbf{70} (2001), 595--606.

\bibitem{LiuXinCPAM}
\textsc{Liu, J.G.; Xin, Z.} Convergence of Vortex Methods for Weak Solutions to the 2-D Euler Equations with Vortex Sheet Data. \emph{Comm. Pure Appl. Math.} \textbf{48} (1995), 611--628.

\bibitem{Majda}
\textsc{Majda, A.J.}
Remarks on weak solutions for vortex sheets with a distinguished sign. \emph{Indiana Univ. Math. J.} \textbf{42} (1993), 921--939. 




\bibitem{Miot}
{\sc Miot, E.}
\newblock Dynamics of vortices for the complex {G}inzburg-{L}andau equation.
\newblock {\em Anal. PDE 2}, 2 (2009), 159--186.


\bibitem{PR}
\textsc{Peres, L.; Rubinstein, J.}, Vortex dynamics in $U(1)$ Ginzburg-Landau models, \emph{Phys. D}, \textbf{64}~(1993),  299--309.

\bibitem{R} 
\textsc{Roitberg, Y.}: \emph{Elliptic Boundary Value Problems in the Space of Distributions}. Mathematics and its Applications, 498. Kluwer Academic Publications, Dordrecht, 1999


\bibitem{Sandier}
\textsc{Sandier, E.} Lower bounds for the energy of unit vector fields and applications. \emph{J. Funct. Anal.} \textbf{152}~(1998), 379--403.

\bibitem{SandSerf} 
\textsc{Sandier, E., Serfaty, S.}: Gamma-convergence of gradient flows with applications to Ginzburg-Landau. \emph{Comm. Pure Appl. Math.} \textbf{57}, 1627--1672 (2004)


\bibitem{SSProd} \textsc{Sandier, E., Serfaty, S.} \emph{Product Estimates for Ginzburg-Landau and corollaries}. J. Funct. Anal. 211 (2004), no. 1, 219--244.

\bibitem{SandierSoret}
\textsc{Sandier, E., Soret, M.}
\emph{$\mathbb{S}^1$-Valued Harmonic Maps with High Topological Degree: Asymptotic Behavior of the Singular Set.} Potential Anal. \textbf{13} (2000), 169--184.


\bibitem{Schochet}
\textsc{Schochet, S.}, The point vortex method for periodic weak solutions of the 2D Euler equations, \emph{Comm. Pure Appl. Math.}, \textbf{49} (1996) 911--965.


\bibitem{SerfatyCollision0}
\textsc {Serfaty, S.} \emph{ Vortex collisions and energy-dissipation rates in the Ginzburg-Landau heat flow, part I: Study of the perturbed Ginzburg-Landau equation}, Journal Eur. Math Society , 9, No 2, (2007), 177--217.

\bibitem{SerfatyCollision}
\textsc {Serfaty, S.} \emph{ Vortex collisions and energy-dissipation rates in the Ginzburg-Landau heat flow. II. The dynamics.} J. Eur. Math. Soc. (JEMS) 9 (2007), no. 3, 383--426.

\bibitem{SerfGCsurv}
\textsc{Serfaty, S.} Gamma-convergence of gradient flows on Hilbert and metric spaces and applications, \emph{Discrete Contin. Dyn. Syst. Ser. A 31}, 4 (2011), 1427--1451.

\bibitem{SerfatyTice} \textsc{Serfaty, S., Tice, I.}
{Ginzburg-Landau vortex dynamics with pinning and strong applied currents.}
\emph{Arch. Rat. Mech. Anal. 201} (2011), 413--464.

\bibitem{SpirnGE}
\textsc{Spirn, D.} \emph{Vortex dynamics of the full time-dependent 
Ginzburg-Landau model.} Comm. Pure Appl. Math. 55 (2002), 537--581.


 



\end{thebibliography}
 \end{document}